\newcommand{\K}{\mathbf{k}}
\newcommand{\La}{\Lambda}
\newcommand{\cC}{\mathcal{C}}
\newcommand{\cD}{\mathcal{D}}
\newcommand{\cP}{\mathcal{P}}
\newcommand{\cI}{\mathcal{I}}
\newcommand{\divides}{\mid}
\newcommand{\ZZ}{\mathbb{Z}}
\newcommand{\isom}{\cong}
\newcommand{\J}{\mathcal{J}}
\DeclareMathOperator{\add}{add}
\DeclareMathOperator{\rad}{rad}
\DeclareMathOperator{\Ext}{Ext}
\DeclareMathOperator{\Hom}{Hom}
\DeclareMathOperator{\m}{mod}
\DeclareMathOperator{\coker}{coker}
\DeclareMathOperator{\soc}{soc}
\DeclareMathOperator{\op}{op}
\DeclareMathOperator{\lcm}{lcm}
\DeclareRobustCommand\longtwoheadrightarrow
\newcommand{\longhookrightarrow}{\lhook\joinrel\longrightarrow}
\providecommand{\abs}[1]{\lvert#1\rvert}
\tikzset{
  symbol/.style={
    draw=none,
    every to/.append style={
      edge node={node [sloped, allow upside down, auto=false]{$#1$}}}
  }
}
\theoremstyle{plain}
\newtheorem*{theorem*}{Theorem}
\newtheorem{theorem}{Theorem}[section] 
\theoremstyle{plain} 
\newcommand{\thistheoremname}{}
\newtheorem*{genericthm*}{\thistheoremname}
\newenvironment{namedthm*}[1]
  {\renewcommand{\thistheoremname}{#1}%
   \begin{genericthm*}}
  {\end{genericthm*}}
\theoremstyle{definition}
\newtheorem{definition}[theorem]{Definition} 
\newtheorem{definition-proposition}[theorem]{Definition-Proposition} 
\newtheorem{example}[theorem]{Example} 
\newtheorem{corollary}[theorem]{Corollary} 
\newtheorem{lemma}[theorem]{Lemma} 
\newtheorem{proposition}[theorem]{Proposition} 
\newtheorem{remark}[theorem]{Remark} 
\numberwithin{equation}{section} 
\title[$n$-cluster tilting subcategories for radical square zero algebras]{$n$-cluster tilting subcategories for radical square zero algebras}
\subjclass[2020]{16G20 (Primary), 16G70 (Secondary)}
\author{Laertis Vaso}
\keywords{Aus\-lan\-der--Rei\-ten theory, $n$-cluster tilting subcategory, radical square zero algebra, string algebra, rep\-re\-sen\-ta\-tion-fi\-nite algebra}
\begin{document}

\maketitle

\begin{abstract}
    We give a characterization of radical square zero bound quiver algebras $\K Q/\J^2$ that admit $n$-cluster tilting subcategories and $n\ZZ$-cluster tilting subcategories in terms of $Q$. We also show that if $Q$ is not of cyclically oriented extended Dynkin type $\tilde{A}$, then the poset of $n$-cluster tilting subcategories of $\K Q/\J^2$ with relation given by inclusion forms a lattice isomorphic to the opposite of the lattice of divisors of an integer which depends on $Q$.
\end{abstract}

\section*{Introduction}

Representation theory of algebras can be described as the study of the category $\m\La$ of fi\-nite-di\-men\-sion\-al (right) modules over an algebra $\La$. One of the most helpful tools in that study  has been Aus\-lan\-der--Rei\-ten theory. In recent years a high\-er-di\-men\-sion\-al analogue of Aus\-lan\-der--Rei\-ten theory has been introduced by Iyama \cite{IYA2, IYA4}; see also  \cite{IYA1}. In this theory, instead of focusing on $\m\La$, one restricts to a suitable subcategory $\cC$ of $\m\La$ called an \emph{$n$-cluster tilting subcategory} for some positive $n$, while if $\cC$ has an additive generator $M$, then $M$ is called an \emph{$n$-cluster tilting module}. In this setting one may describe $\cC$ using an $n$-dimensional version of Aus\-lan\-der--Rei\-ten theory.

Every algebra $\La$ admits a unique $1$-cluster tilting subcategory, namely $\m\La$ itself. On the other hand, if $n\geq 2$, then an $n$-cluster tilting subcategory may not exist. Generally, it is not easy to find algebras which admit $n$-cluster tilting subcategories. Recently there has been a lot of research in trying to find or construct $n$-cluster tilting subcategories, see for example \cite{IO2,HI,IO,CIM,JK,CDIM}.

For simplicity, we assume that all quivers in this article are connected; the results of this paper can be straightforwardly generalised for quivers which are not connected. For an integer $m\in\ZZ_{\geq 1}$ we denote by $A_m$ the quiver $1\longrightarrow 2 \longrightarrow \cdots \longrightarrow m$ and by $\tilde{A}_m$ the quiver
\[
\begin{tikzpicture}[scale=0.7, transform shape]
\node (1) at (1,1.732050808) {$1$};
\node (0) at (-1,1.732050808) {$m$};
\node (m-1) at (-2,0) {$m-1$};
\node (m-2) at (-1,-1.732050808) {$ $};
\node (dots) at (1,-1.732050808) {$ $};
\node (2) at (2,0) {$2$.};
\draw[->] (0) to [out=30, in=150]  node[draw=none, above] {} (1);
\draw[->] (1) to [out=-30, in=90] node[draw=none, midway, right] {} (2);
\draw[->] (m-1) to [out=90, in=-150] node[draw=none, midway, left] {} (0);
\draw[->] (m-2) to [out=150, in=-90] node[draw=none, midway, left] {} (m-1);
\draw[->] (2) to [out=-90, in=30] node[draw=none, midway, right] {} (dots);
\draw[dotted] (dots) to [out=-150,in=-30] (m-2);
\end{tikzpicture}
\]
For a quiver $Q$ we denote by $\J$ the ideal of the path algebra $\K Q$ generated by the arrows of $Q$. One may then ask for which integers $l\geq 2$ does the bound quiver algebra $\La=\K Q/\J^l$ admit an $n$-cluster tilting subcategory. Several results are known in that direction. The case $Q=A_m$ has been studied in \cite{VAS}, while the case $Q=\tilde{A}_m$ has been studied in \cite{DI}. The case where $n$ is the global dimension of $\La$ was studied in \cite{ST}. In this article we consider the case where $l=2$. As a first result we have the following theorem.

\begin{namedthm*}{Theorem A}[Proposition \ref{prop:Q is n-pre-admissible} and Theorem \ref{thrm:representation-finite string algebra}]\label{thrm:A}
Let $\La=\K Q/\J^2$ and $n\geq 2$. If $\La$ admits an $n$-cluster tilting subcategory, then $\La$ is a rep\-re\-sen\-ta\-tion-fi\-nite string algebra. Moreover, if $X$ is an indecomposable $\La$-module and $X$ is not simple, then $X$ is projective or injective.
\end{namedthm*}

Theorem A shows that a radical square zero bound quiver algebra $\La$ which admits an $n$-cluster tilting subcategory is well-understood from the point of view of representation theory. In particular, since $\La$ is rep\-re\-sen\-ta\-tion-fi\-nite, every $n$-cluster tilting subcategory $\cC$ of $\m\La$ is of the form $\cC=\add(M)$ for an $n$-cluster tilting module $M\in\m\La$. We then give the following characterization, which is the main result of this paper.

\begin{namedthm*}{Theorem B}[Theorem \ref{thrm:n-cluster tilting iff n-admissible}]\label{thrm:B} 
Let $\La=\K Q/J^2$ and $n\geq 2$. Then $\La$ admits an $n$-cluster tilting subcategory $\cC$ if and only if $Q$ is an $n$-admissible quiver. If moreover $Q\neq \tilde{A}_m$, then $\cC$ is unique and $\cC=\add\left(\bigoplus_{j\geq 0}\tau_n^{-j}(\La)\right)$ where $\tau_n^{-}=\tau^{-}\Omega^{-(n-1)}$.
\end{namedthm*}

For the definition of $n$-admissible quivers we refer to Definition \ref{def:n-pre-admissible} and Definition \ref{def:n-admissible}; we refer to Remark \ref{rem:how to find n-admissible quivers} for an easy way to construct $n$-admissible quivers. Given Theorem B, it is not hard to classify radical square zero bound quiver algebras which admit $n\ZZ$-cluster tilting subcategories in the sense of \cite{IJ}. 

\begin{namedthm*}{Theorem C}[Theorem \ref{thrm:nZ-cluster tilting iff Nakayama}]
Let $\La=\K Q/\J^2$ and $n\geq 2$. Then $\La$ admits an $n\ZZ$-cluster tilting subcategory if and only if $Q=A_m$ and $n\divides (m-1)$ or $Q=\tilde{A}_m$ and $n\divides m$.
\end{namedthm*}

Finally, we show that if $Q\neq\tilde{A}_m$, then the set of $n$-cluster tilting subcategories of $\K Q/\J^2$ forms a lattice isomorphic to the lattice of a certain integer which depends only on $Q$.

\begin{namedthm*}{Theorem D}[Theorem \ref{thrm:lattice of n-ct}]
Let $\La=\K Q/\J^2$. Assume that $Q\neq \tilde{A}_m$ and that $Q$ has admissible degree $N$. Set
\[
    \mathbf{CT}(\La)\coloneqq\{\cC\subseteq \m\La \mid \text{there exists $n\in\ZZ_{\geq 1}$ such that $\cC$ is $n$-cluster tilting}\}.
\]
Then $(\mathbf{CT}(\La),\subseteq)$ is a complete lattice isomorphic to the opposite of the lattice of divisors of $N$.
\end{namedthm*}

For the definition of the admissible degree of a quiver we refer to Definition \ref{def:admissible degree}.

This paper is organized as follows. In Section \ref{sec:preliminaries and notation} we establish notation and include some general results about $n$-cluster tilting subcategories and radical square zero algebras. In Section \ref{sec:necessary conditions} we find some necessary conditions for a radical square zero bound quiver algebra to admit an $n$-cluster tilting subcategories. In Section \ref{sec:sufficient conditions} we show that these necessary conditions are also sufficient. In Section \ref{sec:main result and applications} we state our main result and a few applications.

\section{Preliminaries and notation}\label{sec:preliminaries and notation}

Let $\K$ be a field. By an algebra we mean a fi\-nite-di\-men\-sion\-al associative $\K$-algebra with a unit and by a module we mean a fi\-nite-di\-men\-sion\-al right module. 

Let $\La$ be an algebra. The \emph{(Jacobson) radical} $\rad(\La)$ of $\La$ is the intersection of all the maximal right ideals of $\Lambda$. The algebra $\La$ is a \emph{radical square zero} algebra if $\rad^2(\La)=0$. We denote by $\m\La$ the category of $\La$-modules. A $\La$-module $M\in\m\La$ is called \emph{basic} if all indecomposable direct summands of $M$ are pairwise non-isomorphic. For $M\in \m\La$ we denote by $\Omega(M)$ the \emph{syzygy} of $M$, that is the kernel of $P(M)\longtwoheadrightarrow M$, where $P(M)$ is the \emph{projective cover} of $M$ and by $\Omega^{-}(M)$ the \emph{cosyzygy} of $M$, that is the cokernel of $M\longhookrightarrow I(M)$ where $I(M)$ is the \emph{injective hull} of $M$. Note that $\Omega(M)$ and $\Omega^{-}(M)$ are unique up to isomorphism. 

We denote by $D$ the duality $\Hom(-,K)$ between $\m\La$ and $\m\La^{\op}$. We denote by $\tau$ and $\tau^-$ the \emph{Aus\-lan\-der--Rei\-ten translations} and we recall the \emph{Aus\-lan\-der--Rei\-ten duality}
\[
\Ext^{1}_{\La}(M,N)\isom D\underline{\Hom}_{\La}(\tau^{-}(N),M),
\]
for all $M,N\in\m\La$, where $\underline{\Hom}_{\La}(-,-)$ denotes morphisms in the \emph{projectively stable category} $\underline{\m}\La$. For more details about the representation theory of fi\-nite-di\-men\-sion\-al algebras and Aus\-lan\-der--Rei\-ten theory we refer to \cite{ARS, ASS}.

Throughout this article $n$ denotes a positive integer. A subcategory $\cC\subseteq \m\La$ is called \emph{$n$-rigid} if $\Ext^{i}_{\La}(\cC,\cC)=0$ for all $i\in\{1,\ldots,n-1\}$. A functorially finite subcategory $\cC\subseteq\m\La$ is called \emph{$n$-cluster tilting} if
\begin{align*}
    \cC &= \{ X \in \m\La \mid \Ext^{i}_{\La}\left(X, \cC\right)=0\text{ for all $0 < i < n$}\} \\
    & = \{ X \in \m\La \mid \Ext^{i}_{\La}\left(\cC,X\right)=0\text{ for all $0 < i < n$}\}.
\end{align*}
If moreover $\cC=\add(M)$ for a module $M\in\m\La$, then $M$ is called an \emph{$n$-cluster tilting module}. Notice that any category of the form $\add(M)$ for some $M\in\m\La$ is functorially finite. In particular, if $\La$ is rep\-re\-sen\-ta\-tion-fi\-nite, then any subcategory of $\m\La$ is functorially finite. Clearly if $\cC\subseteq \m\La$ is $n$-cluster tilting, then $\La,D(\La)\in\cC$; we use this fact throughout. We denote by $\tau_n$ and $\tau_n^{-}$ the \emph{$n$-Aus\-lan\-der--Rei\-ten translations} defined by $\tau_n=\tau\Omega^{n-1}$ and $\tau_n^{-}=\tau^{-}\Omega^{-(n-1)}$. For more details about higher dimensional Aus\-lan\-der--Rei\-ten theory we refer to \cite{IYA1}.

Notice that there exists a unique $1$-cluster tilting subcategory of $\m\La$, namely $\m\La$ itself. In the rest of this paper we assume that $n\geq 2$, unless otherwise stated. We also need the following observations.

\begin{proposition}\label{prop:n-ct is closed under n-AR translations and uniqueness of n-ct}
Let $\La$ be a fi\-nite-di\-men\-sion\-al algebra and let $\cC\subseteq\m\La$ be an $n$-cluster tilting subcategory. 
\begin{enumerate}[label=(\alph*)]
    \item The functors $\tau_n:\cC_{\cP}\to \cC_{\cI}$ and $\tau_n^{-}:\cC_{\cI}\to \cC_{\cP}$ induce mutually inverse bijections, between the set $\cC_{\cP}$ of isomorphism classes of indecomposable nonprojective $\La$-modules and the set $\cC_{\cI}$ of isomorphism classes of indecomposable noninjective $\La$-modules.
    
    \item If $\cD\subseteq\m\La$ is an $n$-cluster tilting subcategory such that $\cD\subseteq \cC$, then $\cC=\cD$.
    
    \item Let $M=\bigoplus_{j\geq 0}\tau_n^{-j}(\La)$. Then $M\in\cC$. If moreover $M$ is an $n$-cluster tilting module, then $\cC=\add(M)$.
\end{enumerate}
\end{proposition}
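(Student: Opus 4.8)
The plan is to treat the three parts in order, since part (a) is the substantial input and parts (b) and (c) are short deductions from it. For part (a), I would appeal to Iyama's higher-dimensional Auslander--Reiten theory for $n$-cluster tilting subcategories \cite{IYA1}. The key point is that such a $\cC$ admits $n$-almost split sequences: for each indecomposable nonprojective $X\in\cC$ there is an exact sequence $0\to\tau_n(X)\to C_{n-1}\to\cdots\to C_0\to X\to 0$ with all terms in $\cC$ and with $\tau_n(X)=\tau\Omega^{n-1}(X)$ indecomposable noninjective, and dually for $\tau_n^{-}$ applied to indecomposable noninjectives. From these sequences one reads off that $\tau_n$ sends $\cC_\cP$ into $\cC_\cI$ and that $\tau_n^{-}$ sends $\cC_\cI$ into $\cC_\cP$, and that the two are mutually inverse. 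A more self-contained route would be to verify directly that $\tau_n(X)\in\cC$ for $X\in\cC_\cP$ by checking $\Ext^i_\La(\tau_n(X),C)=0$ for all $C\in\cC$ and $0<i<n$, using the Auslander--Reiten duality recalled above together with the dimension shift $\Ext^i_\La(\Omega^{n-1}(X),-)\isom\Ext^{i+n-1}_\La(X,-)$ for $i\geq 1$; I expect this to be the main obstacle, as it requires careful bookkeeping of the stable $\Hom$ spaces, so I would prefer to invoke the established theory.

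Part (b) is a short formal argument. Since $\cD\subseteq\cC$ and $\cC$ is $n$-rigid, for any $X\in\cC$ and any $D\in\cD$ we have $\Ext^i_\La(X,D)=0$ for $0<i<n$; hence $\Ext^i_\La(X,\cD)=0$, and the characterization $\cD=\{Y\in\m\La\mid\Ext^i_\La(Y,\cD)=0\text{ for }0<i<n\}$ gives $X\in\cD$. Thus $\cC\subseteq\cD$, and together with the hypothesis $\cD\subseteq\cC$ this yields $\cC=\cD$.

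For part (c), I would first show $M\in\cC$. We have $\La\in\cC$, and by part (a) the functor $\tau_n^{-}$ carries indecomposable noninjective modules of $\cC$ to modules of $\cC$; so by induction on $j$ every indecomposable summand of $\tau_n^{-j}(\La)$ lies in $\cC$ (the injective summands contribute nothing), and since $\cC$ is closed under direct sums and summands we conclude $M\in\cC$. Finally, if $M$ is an $n$-cluster tilting module, then $M\in\cC$ forces $\add(M)\subseteq\cC$, so applying part (b) with $\cD=\add(M)$ gives $\cC=\add(M)$.
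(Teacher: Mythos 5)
Your proposal is correct and follows essentially the same route as the paper: part (a) is delegated to Iyama's theory of $n$-almost split sequences (the paper cites \cite[Theorem 2.8]{IYA1}), part (b) is the same short argument from the defining characterization of $n$-cluster tilting subcategories, and part (c) combines $\La\in\cC$ with (a) to get $M\in\cC$ and then applies (b) to $\cD=\add(M)$. No issues.
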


\begin{proof}
\begin{enumerate}[label=(\alph*)]
    \item See \cite[Theorem 2.8]{IYA1}.
    
    \item Follows directly from the definition of $n$-cluster tilting subcategories.
    
    \item Since $\La\in\cC$, we have that $M\in\cC$ by (a). In particular we have $\add(M)\subseteq \cC$. Hence if $M$ is an $n$-cluster tilting module, then by (b) we conclude that $\cC=\add(M)$.\qedhere
\end{enumerate}
\end{proof}

\begin{lemma}\label{lem:taun-inverse gives nonzero ext}
Let $\La$ be a fi\-nite-di\-men\-sion\-al algebra and $M,N\in\m\La$ with $M\neq 0$. Assume that $\tau_x^{-}(N)\isom M$ for some $x\geq 1$. Then $\Ext_{\La}^{x}(M,N)\neq 0$.
\end{lemma}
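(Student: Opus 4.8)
The plan is to reduce the statement to the ordinary Auslander--Reiten duality recalled above by shifting the homological degree down to $1$ using cosyzygies. First I would record the standard dimension-shift isomorphism
\[
\Ext^{x}_{\La}(M,N)\isom \Ext^{1}_{\La}\left(M,\Omega^{-(x-1)}(N)\right),
\]
valid for every $x\geq 1$. This follows by induction on $x$: applying $\Hom_{\La}(M,-)$ to the short exact sequence $0\to N\to I(N)\to \Omega^{-}(N)\to 0$ and using that $\Ext^{i}_{\La}(M,I(N))=0$ for all $i\geq 1$ (as $I(N)$ is injective) yields $\Ext^{i}_{\La}(M,\Omega^{-}(N))\isom \Ext^{i+1}_{\La}(M,N)$ for $i\geq 1$, and iterating this gives the displayed formula.

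Writing $N'=\Omega^{-(x-1)}(N)$, the hypothesis becomes $\tau^{-}(N')=\tau^{-}\Omega^{-(x-1)}(N)=\tau_x^{-}(N)\isom M$. Applying the Auslander--Reiten duality to the pair $(M,N')$ then gives
\[
\Ext^{x}_{\La}(M,N)\isom \Ext^{1}_{\La}(M,N')\isom D\underline{\Hom}_{\La}(\tau^{-}(N'),M)\isom D\underline{\Hom}_{\La}(M,M).
\]
Thus it remains to show that $\underline{\Hom}_{\La}(M,M)\neq 0$, equivalently that the identity morphism $\mathrm{id}_M$ does not factor through a projective module.

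The final step is the crux, and it is where the hypotheses $M\neq 0$ and $M\isom\tau^{-}(N')$ are used: the class of $\mathrm{id}_M$ vanishes in $\underline{\m}\La$ precisely when $M$ is projective, so I would argue that $M$ is not projective. This holds because any nonzero module in the image of $\tau^{-}$ has no nonzero projective direct summand, the functor $\tau^{-}$ annihilating injective summands and sending the remaining indecomposable summands to indecomposable nonprojective modules; since $M\isom\tau^{-}(N')$ is nonzero, it is therefore not projective. Hence $\mathrm{id}_M\neq 0$ in $\underline{\Hom}_{\La}(M,M)$, so $D\underline{\Hom}_{\La}(M,M)\neq 0$, and consequently $\Ext^{x}_{\La}(M,N)\neq 0$. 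The only point requiring care is the degenerate bookkeeping when $x=1$, where $\Omega^{-(x-1)}(N)=N$ and the argument collapses directly to the Auslander--Reiten duality.
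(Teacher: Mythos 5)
Your proof is correct, and its skeleton is the same as the paper's: both arguments use the dimension-shift isomorphism $\Ext^{x}_{\La}(M,N)\isom\Ext^{1}_{\La}(M,\Omega^{-(x-1)}(N))$ to reduce to a degree-one statement about $N'=\Omega^{-(x-1)}(N)$ and $M\isom\tau^{-}(N')$. Where you diverge is in the base case: the paper reduces to indecomposable $M$ and $N$, observes that $N$ must be noninjective, and extracts a nonzero class in $\Ext^{1}_{\La}(\tau^{-}(N),N)$ directly from the (non-split) almost split sequence $0\to N\to F\to\tau^{-}(N)\to 0$; you instead invoke the Auslander--Reiten duality $\Ext^{1}_{\La}(M,N')\isom D\underline{\Hom}_{\La}(\tau^{-}(N'),M)\isom D\underline{\Hom}_{\La}(M,M)$ and then show $\underline{\Hom}_{\La}(M,M)\neq 0$ by noting that a nonzero module in the image of $\tau^{-}$ is never projective. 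Both closing steps are standard and essentially equivalent pieces of Auslander--Reiten theory; your route has the small advantage of avoiding the reduction to indecomposables and of isolating explicitly the fact (identity factors through a projective iff the module is projective) that makes the stable Hom nonzero, while the paper's is marginally more self-contained in that it only uses the existence of almost split sequences rather than the duality formula.
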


\begin{proof}
We first consider the case $x=1$. By additivity of $\tau^{-}$ and $\Ext_{\La}^{x}(-,-)$ we may assume that $M$ and $N$ are indecomposable. Since $\tau^{-}(N)\isom M$ and $M$ is nonzero, it follows that $N$ is noninjective. Then there exists an almost split sequence $0\longrightarrow N\longrightarrow F\longrightarrow \tau^{-}(N)\longrightarrow 0$ in $\m\La$ and the result follows. For $x\geq 2$ we have using dimension shift that
\[
\Ext_{\La}^{x}(M,N) \isom \Ext_{\La}^{1}(\tau_x^{-}(N),\Omega^{-(x-1)}(N)) = \Ext_{\La}^{1}(\tau^{-}(\Omega^{-(x-1)}(N)),\Omega^{-(x-1)}(N)) \neq 0, 
\]
where the last inequality follows from the case $x=1$.
\end{proof}

Next we recall some background on bound quiver algebras. A \emph{quiver} $Q=(Q_0,Q_1,s,t)$ is a quadruple consisting of a set $Q_0$ of \emph{vertices}, a set $Q_1$ of \emph{arrows} and two maps $s,t:Q_1\to Q_0$ called \emph{source map} and \emph{target map}. All quivers in this article are \emph{finite}, that is both $Q_0$ and $Q_1$ are finite sets. Moreover, for simplicity, we assume that all quivers in this article are \emph{connected}, that is the underlying unoriented graph of $Q$ is connected. For a vertex $v\in Q_0$, the \emph{ingoing degree} of $v$, denoted by $\delta^{-}(v)$, is the number of arrows ending at $v$ and the \emph{outgoing degree} of $v$, denoted by $\delta^{+}(v)$, is the number of arrows starting at $v$. The \emph{degree} of $v$ is the tuple $(\delta^{-}(v),\delta^{+}(v))$. For a quiver $Q$ and $k\geq 1$, a \emph{path $\mathbf{p}$ of length $k$} in $Q$ is a sequence of $k$ consecutive arrows
\[\mathbf{p} = 
    \begin{tikzcd} 
        v_1 \arrow[r, "\alpha_1"] & v_2 \arrow[r, "\alpha_2"] & \cdots \arrow[r] & v_{k} \arrow[r, "\alpha_{k}"] & v_{k+1},
    \end{tikzcd}
\]
in $Q$. We also assign a trivial path $\epsilon_v$ of length $0$ to each vertex $v\in Q_0$. 

Let $Q$ be a quiver. We denote by $\K Q$ the \emph{path algebra} of $Q$ and we denote by $\J\subseteq \K Q$ the \emph{arrow ideal} of $Q$, that is the ideal of $\K Q$ generated by the arrows of $Q$. An ideal $\cI\subseteq\K Q$ is called \emph{admissible} if there exists $k\geq 2$ such that $\J^k \subseteq \cI \subseteq \J^2$. If $\cI$ is an admissible ideal, then the \emph{bound quiver algebra }$\La=\K Q/\cI$ is a fi\-nite-di\-men\-sion\-al algebra. Throughout we identify $\La$-modules and representations of $Q$ bound by $\cI$. For a vertex $v\in Q_0$, we denote by $P(v)$, $I(v)$ and $S(v)$ the indecomposable projective, injective and simple $\La$-modules corresponding to $v$. When clear from context, we use composition series to denote $\La$-modules. For more details on bound quiver algebras and their representation theory we refer to \cite{ARS, ASS}.

For radical square zero algebras we have the following easy observations.

\begin{lemma}\label{lem:semisimple syzygy} 
Let $\La$ be a radical square zero algebra and let $M$ be a nonprojective $\La$-module. Then $\Omega(M)$ is semisimple.
\end{lemma}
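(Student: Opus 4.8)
The plan is to exhibit $\Omega(M)$ as a submodule of a semisimple module. First I would recall that, by definition, $\Omega(M)$ is the kernel of the projective cover $\pi\colon P(M)\longtwoheadrightarrow M$. A standard property of projective covers over a finite-dimensional algebra is that $\ker(\pi)$ is a superfluous submodule of $P(M)$, and is therefore contained in the radical $\rad(P(M))$. Thus it suffices to prove that $\rad(P(M))$ itself is semisimple, since every submodule of a semisimple module is again semisimple.

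To establish that $\rad(P(M))$ is semisimple, I would invoke the radical square zero hypothesis. Since $P(M)$ is projective we have $\rad(P(M))=P(M)\rad(\La)$, and hence
\[
\rad(P(M))\cdot\rad(\La)=P(M)\rad^2(\La)=0.
\]
This shows that $\rad(P(M))$ is annihilated by $\rad(\La)$, so it is a module over the semisimple algebra $\La/\rad(\La)$; consequently $\rad(P(M))$ is semisimple. Combined with the inclusion from the first step, this gives that $\Omega(M)$ is semisimple.

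I would remark that the nonprojectivity of $M$ is not strictly needed for the conclusion: it merely guarantees that $\Omega(M)$ is nonzero and the statement nontrivial, whereas for projective $M$ one has $\Omega(M)=0$, which is trivially semisimple. The only point requiring genuine care, and the step I would be most explicit about, is the inclusion $\Omega(M)\subseteq\rad(P(M))$, which rests on the fact that the kernel of a projective cover is superfluous; the remaining assertions follow immediately from $\rad^2(\La)=0$.
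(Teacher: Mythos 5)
Your proof is correct and follows essentially the same route as the paper: both arguments observe that $\Omega(M)\subseteq\rad(P(M))$ and that $\rad(P(M))\cdot\rad(\La)=P(M)\rad^2(\La)=0$ forces $\rad(P(M))$ to be semisimple, whence so is its submodule $\Omega(M)$. Your version is slightly more explicit about why the kernel of the projective cover lands in the radical and about the passage to a $\La/\rad(\La)$-module, but the content is identical.
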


\begin{proof}
Since $M$ is nonprojective, it follows that $\Omega(M)\neq 0$. Let $P(M)$ be the projective cover of $M$. Then $\rad^2(P(M))=P(M)\rad^2(\La)=0$ and so $\rad(P(M))$ is semisimple. Since $\Omega(M)$ is a submodule of $\rad(P(M))$ and $\Omega(M)\neq 0$, we conclude that $\Omega(M)$ is semisimple.
\end{proof}

\begin{lemma}\label{lem:dimension of syzygy of ind.inj.}
Let $\La$ be a radical square zero algebra and assume that $\cC\subseteq \m\La$ is an $n$-cluster tilting subcategory. Let $I$ be an indecomposable injective $\La$-module. Then $\dim(\Omega(I))\leq 1$.
\end{lemma}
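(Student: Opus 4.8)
The plan is to control the first syzygy $\Omega(I)$ by comparing it with all the higher syzygies $\Omega^{j}(I)$ for $1\le j\le n-1$ and then invoking the $n$-Aus\-lan\-der--Rei\-ten translation. First I would dispose of the case where $I$ is projective, for which $\Omega(I)=0$; so from now on assume $I$ is indecomposable injective and nonprojective, so that $I\in\cC$ lies among the indecomposable nonprojectives to which Proposition \ref{prop:n-ct is closed under n-AR translations and uniqueness of n-ct}(a) applies. By Lemma \ref{lem:semisimple syzygy}, $\Omega(I)$ is semisimple, and since the syzygy of a semisimple module over a radical square zero algebra is again semisimple, every $\Omega^{j}(I)$ with $j\ge 1$ is semisimple. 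Thus bounding $\dim\Omega(I)$ amounts to bounding the number of simple summands of $\Omega(I)$.

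The first key step is to rule out \emph{projective} summands in the intermediate syzygies. For $1\le j\le n-1$ and any simple $S(u)$, minimality of the projective resolution of $I$ together with the semisimplicity of $\Omega^{j}(I)$ gives that $\dim_{\K}\Ext^{j}_{\La}(I,S(u))$ equals the multiplicity $[\Omega^{j}(I):S(u)]$ of $S(u)$ as a summand of $\Omega^{j}(I)$. Now if some summand $S(u)$ of $\Omega^{j}(I)$ were projective, i.e.\ $u$ a sink and $S(u)=P(u)$, then $S(u)\in\cC$ while $\Ext^{j}_{\La}(I,S(u))\neq 0$ with $I\in\cC$ and $1\le j\le n-1$, contradicting the $n$-rigidity of $\cC$. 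Hence, for every $1\le j\le n-1$, all simple summands of $\Omega^{j}(I)$ are nonprojective, equivalently their vertices have outgoing degree $\delta^{+}\ge 1$.

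With this in hand I would pin down the top syzygy and then propagate the bound downwards. Writing $\Omega^{n-1}(I)=\bigoplus_k S(c_k)$, each $c_k$ is nonprojective, so each $\tau S(c_k)$ is a nonzero indecomposable and $\tau_n(I)=\tau\Omega^{n-1}(I)=\bigoplus_k\tau S(c_k)$. Since Proposition \ref{prop:n-ct is closed under n-AR translations and uniqueness of n-ct}(a) guarantees that $\tau_n(I)$ is a nonzero indecomposable $\La$-module, there can be exactly one summand, so $\dim\Omega^{n-1}(I)=1$. Finally, for $1\le j\le n-2$, writing $\Omega^{j}(I)=\bigoplus_l S(d_l)$ with every $d_l$ a non-sink, I compute $\Omega^{j+1}(I)=\bigoplus_l\rad P(d_l)$ and hence $\dim\Omega^{j+1}(I)=\sum_l\delta^{+}(d_l)\ge\#\{l\}=\dim\Omega^{j}(I)$. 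Chaining these inequalities yields $\dim\Omega(I)\le\dim\Omega^{2}(I)\le\cdots\le\dim\Omega^{n-1}(I)=1$, as desired.

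The main obstacle, and the place where the $n$-cluster tilting hypothesis is genuinely used, is the elimination of projective summands from the syzygies: the bijection of Proposition \ref{prop:n-ct is closed under n-AR translations and uniqueness of n-ct}(a) alone is \emph{not} enough, since projective summands are annihilated by $\tau$ and are therefore invisible to $\tau_n(I)$, so that $\tau_n(I)$ could be indecomposable even when $\dim\Omega(I)\ge 2$. It is precisely the $n$-rigidity of $\cC$, via the identity $\dim_{\K}\Ext^{j}_{\La}(I,S(u))=[\Omega^{j}(I):S(u)]$ for semisimple syzygies, that forbids them. Once projectivity is excluded, the remaining steps are the standard minimal-resolution computation of $\Ext$ into a simple module and a routine dimension count, so I expect them to be unproblematic.
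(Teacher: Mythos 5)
Your proof is correct, but it takes a genuinely different route from the paper's. The paper's argument is essentially two lines: after discarding the case where $I$ is projective, it combines Lemma \ref{lem:semisimple syzygy} (so $\Omega(I)$ is semisimple) with a citation to \cite[Corollary 3.3]{VAS}, which says that the syzygy of an indecomposable nonprojective module of an $n$-cluster tilting subcategory is again indecomposable; an indecomposable semisimple module is simple, hence of dimension at most one. You avoid that citation entirely and in effect reprove the relevant special case from scratch: $n$-rigidity, via the minimal-resolution identification $\Ext^{j}_{\La}(I,S)\isom\Hom_{\La}(P_j,S)$, excludes projective summands from $\Omega^{j}(I)$ for $1\leq j\leq n-1$; the bijection of Proposition \ref{prop:n-ct is closed under n-AR translations and uniqueness of n-ct}(a) then forces $\Omega^{n-1}(I)$ to be simple, since $\tau_n(I)=\tau\Omega^{n-1}(I)$ is indecomposable and $\tau$ kills no summand once projectives are excluded; and the radical square zero structure gives the monotonicity $\dim\Omega^{j}(I)\leq\dim\Omega^{j+1}(I)$, transporting the bound down to $j=1$. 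Your remark that excluding projective summands is where rigidity is genuinely needed (both for the $\tau_n$ step and for the monotonicity) is exactly right. What the paper's route buys is brevity, by outsourcing the indecomposability of syzygies to a general structural result; what yours buys is self-containedness and an explicit, radical-square-zero-internal explanation of why syzygies cannot grow. One small caveat shared by both proofs: the final step tacitly identifies \emph{simple} with \emph{one-dimensional}, which is harmless since the lemma is only ever applied to algebras of the form $\K Q/\J^2$.
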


\begin{proof}
If $I$ is projective, then $\dim(\Omega(I))=0$. Otherwise, assume that $I$ is nonprojective. By Lemma \ref{lem:semisimple syzygy} we have that $\Omega(I)$ is semisimple. By \cite[Corollary 3.3]{VAS} and since $I\in\cC_{\cP}$, we have that $\Omega(I)$ is indecomposable. Since $\Omega(I)$ is semisimple and indecomposable, it follows that $\dim(\Omega(I))\leq 1$.
\end{proof}

In this paper we study radical square zero bound quiver algebras. These can be easily described as in the following lemma.

\begin{lemma}\label{lem:radical square zero bound quiver algebra}
A bound quiver algebra $\K Q/ \cI$ is a radical square zero algebra if and only if $\cI=\J^2$.
\end{lemma}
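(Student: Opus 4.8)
The plan is to reduce everything to the standard description of the Jacobson radical of a bound quiver algebra. Recall that for an admissible ideal $\cI\subseteq\K Q$ the radical of $\La=\K Q/\cI$ is exactly the image of the arrow ideal, that is $\rad(\La)=\J/\cI$. Indeed, $\J/\cI$ is a nilpotent ideal of $\La$ (admissibility gives $\J^k\subseteq\cI$ for some $k\geq 2$, so $(\J/\cI)^k=\J^k/\cI=0$) whose quotient $\La/(\J/\cI)\isom \K Q/\J\isom \K^{\abs{Q_0}}$ is semisimple; for an Artinian algebra the radical is the unique nilpotent ideal with semisimple quotient, see \cite{ASS}. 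First I would record this identity.

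Given this, I would compute $\rad^2(\La)=(\J/\cI)^2$. Since the product of cosets satisfies $(a+\cI)(b+\cI)=ab+\cI$, we get $(\J/\cI)^2=(\J^2+\cI)/\cI$, and because admissibility gives the inclusion $\cI\subseteq\J^2$, the sum collapses to $\J^2+\cI=\J^2$, so that $\rad^2(\La)=\J^2/\cI$.

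From here both implications are immediate. If $\cI=\J^2$, then $\rad^2(\La)=\J^2/\J^2=0$, so $\La$ is radical square zero. Conversely, if $\rad^2(\La)=0$, then $\J^2/\cI=0$, i.e. $\J^2\subseteq\cI$; combined with the inclusion $\cI\subseteq\J^2$ coming once more from admissibility, this forces $\cI=\J^2$. Note that $\J^2$ is itself admissible (take $k=2$), so the right-hand side really is a bound quiver algebra.

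I do not expect a serious obstacle here; the only point requiring genuine care is the identification $\rad(\La)=\J/\cI$, and even there the argument is standard and can be cited. In the computation $\rad^2(\La)=\J^2/\cI$ I would make sure to invoke the admissibility inclusion $\cI\subseteq\J^2$ explicitly at the step where $\J^2+\cI$ is replaced by $\J^2$, since this is precisely the hypothesis that makes the equivalence hold.
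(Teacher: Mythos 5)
Your proposal is correct and follows essentially the same route as the paper: both arguments rest on the identification $\rad(\K Q/\cI)=\J/\cI$ and on the observation that, by admissibility, $\cI=\J^2$ is equivalent to $\J^2\subseteq\cI$, which is equivalent to $(\J/\cI)^2=0$. You simply spell out the intermediate computation $(\J/\cI)^2=(\J^2+\cI)/\cI=\J^2/\cI$ more explicitly than the paper does.
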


\begin{proof}
Since $\cI$ is admissible, we have that $\cI=\J^2$ if and only if $\J^2\subseteq \cI$, which is equivalent to the ideal $(\J/\cI)^2$ being equal to the zero ideal. Since $(\J/\cI)^2=\rad^2(\K Q/\cI)$, the result follows.
\end{proof}

As a corollary, any radical square zero algebra over an algebraically closed field is Morita equivalent to a bound quiver algebra of the form $\K Q/\J^2$.

\begin{proposition}\label{prop:radical square zero over algebraically closed field}
Let $\La$ be a basic and connected fi\-nite-di\-men\-sion\-al $\K$-algebra and assume that $\K$ is algebraically closed. Then $\La$ is a radical square zero algebra if and only if $\La\isom \K Q/\J^2$ for some quiver $Q$.
\end{proposition}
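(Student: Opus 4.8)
The plan is to deduce this from the structure theory of basic finite-dimensional algebras together with the already-established Lemma \ref{lem:radical square zero bound quiver algebra}, so that the proposition becomes essentially a corollary. The backward implication is immediate: if $\La\isom\K Q/\J^2$, then $\La$ is a bound quiver algebra whose defining admissible ideal is $\J^2$, so Lemma \ref{lem:radical square zero bound quiver algebra} applies directly and shows that $\La$ is radical square zero.

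For the forward implication, the key input is Gabriel's theorem: since $\K$ is algebraically closed and $\La$ is basic, connected and fi\-nite-di\-men\-sion\-al, there exist a connected quiver $Q$ and an admissible ideal $\cI\subseteq\K Q$ with $\La\isom\K Q/\cI$; see for instance \cite[Theorem II.3.7]{ASS}. Here $Q$ is the ordinary (Gabriel) quiver of $\La$, whose vertices are indexed by a complete set of primitive orthogonal idempotents and whose arrows encode a basis of $\rad(\La)/\rad^2(\La)$. I would simply quote this structure theorem rather than reprove it, since it is standard background for bound quiver algebras over an algebraically closed field.

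Having written $\La\isom\K Q/\cI$ with $\cI$ admissible, I would then observe that $\La$ is a radical square zero bound quiver algebra and apply Lemma \ref{lem:radical square zero bound quiver algebra} in the converse direction to conclude that $\cI=\J^2$. This yields $\La\isom\K Q/\J^2$, as required, and completes the equivalence.

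There is no serious obstacle in the argument itself: once Gabriel's theorem is invoked, the statement follows in one line from the previous lemma, which already isolated the fact that radical square zero among bound quiver algebras is exactly the condition $\cI=\J^2$. The only points requiring care are the standing hypotheses on $\La$ (basic, connected, fi\-nite-di\-men\-sion\-al, with $\K$ algebraically closed), which are precisely what is needed to apply the structure theorem and to guarantee that $Q$ can be taken connected; all of these are built into the statement.
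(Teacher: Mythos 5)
Your proposal is correct and follows essentially the same route as the paper: invoke the standard structure theorem to write $\La\isom\K Q/\cI$ with $\cI$ admissible, then apply Lemma \ref{lem:radical square zero bound quiver algebra} in both directions to identify the radical square zero condition with $\cI=\J^2$. The paper's own proof is just a terser version of exactly this argument.
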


\begin{proof}
Since $\La$ is basic and $\K$ is algebraically closed, there exists a quiver $Q$ and an admissible ideal $\cI\subseteq \K Q$ such that $\La\isom \K Q/\cI$. The result follows from Lemma \ref{lem:radical square zero bound quiver algebra}.
\end{proof}

We also need to recall the following notion.

\begin{definition}\label{def:string algebra}
A bound quiver algebra $\K Q/\cI$ is a \emph{string algebra} if the following conditions hold:
\begin{enumerate}
    \item[(S1)] For every vertex $v\in Q_0$ we have that $\delta^{-}(v)\leq 2$ and $\delta^{+}(v)\leq 2$.
    \item[(S2)] For every arrow $\alpha\in Q_1$ there exists at most one arrow $\beta\in Q_1$ such that $\beta\alpha\not\in\cI$ and at most one arrow $\gamma\in Q_1$ such that $\alpha\gamma\not\in\cI$.
    \item[(S3)] The ideal $\cI$ can be generated by paths.
\end{enumerate}
\end{definition}

Indecomposable modules over string algebras are classified in \cite{ButRin} using the combinatorics of strings and bands. We briefly recall these combinatorics. 

Let $\K Q/\cI$ be a string algebra. For every arrow $\alpha\in Q_1$ we define a formal inverse $\alpha^{-}$ such that $s(\alpha^{-})=t(\alpha)$ and $t(\alpha^{-})=s(\alpha)$. We define $Q_1^{-}=\{\alpha^{-}\mid \alpha\in Q_1\}$ and we set $(\alpha^{-})^{-}=\alpha$. We call elements of $Q_1$ \emph{direct arrows} and elements of $Q_1^{-}$ \emph{inverse arrows}. A \emph{formal path of length $k\geq 1$} is a sequence $\bm{\ell}=\ell_k\ldots\ell_1$ such that $\ell_i\in Q_1\cup Q_1^{-}$ and such that for all $i\in\{1,\ldots,k-1\}$ we have $t(\ell_i)=s(\ell_{i+1})$ and $\ell_i\neq \ell_{i+1}^{-}$. We also set $\bm{\ell}^{-}=\ell_1^{-}\ldots\ell_k^{-}$. We say that $\bm{\ell}$ is a \emph{string of length $k$} if no formal path of the form $\ell_{i+r}\ldots\ell_{i}$ or $\ell_{i}^{-}\ldots\ell_{i+r}^{-}$ is in $\cI$ for $1\leq i\leq i+r \leq k$. To each vertex $v\in Q_0$ we also associate a string $\bm{e_v}$ of length $0$. We say that a string $\bm{\ell}$ is a \emph{band} if $s(\ell_1)=t(\ell_k)$ and $\bm{\ell}^q$ is a string for every $q\geq 1$, and moreover there is no string $\bm{\ell'}\neq\bm{\ell}$ such that $\bm{\ell'}\ldots\bm{\ell'}=\bm{\ell}$. For each string or band $\bm{\ell}$ we can define a corresponding string or band module $M(\bm{\ell})$ which is indecomposable. Furthermore, every indecomposable $\K Q/\cI$-module is isomorphic to a string or band module. For more details on the definition of $M(\bm{\ell})$ and on other facts about the representation theory of string algebras we refer to \cite[Section 3]{ButRin}.

\section{Necessary conditions}\label{sec:necessary conditions}

In this section we investigate the existence of an $n$-cluster tilting subcategory $\cC\subseteq \m\La$ where $\La=\K Q/\J^2$ and $n\geq 2$. Our aim is to show that these assumptions impose some important restrictions on $Q$ and $\La$. 

\subsection{\texorpdfstring{$n$}{n}-pre-admissible quivers}\label{subsec:n-preadimssible}

Recall that if $\cC\subseteq \m\La$ is an $n$-cluster tilting subcategory, then $\La,D(\La)\in\cC$. We start with showing that the degree of a vertex in $Q$ is bounded.

\begin{lemma}\label{lem:bound on number of arrows}
Let $\La=\K Q/\J^2$ and assume that $\cC\subseteq \m\La$ is an $n$-cluster tilting subcategory. Let $v\in Q_0$ be a vertex. Then $\delta^{-}(v)\leq 2$ and $\delta^{+}(v)\leq 2$.
\end{lemma}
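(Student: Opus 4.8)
The plan is to combine Lemma \ref{lem:dimension of syzygy of ind.inj.} with its dual and translate the resulting constraints into bounds on the degrees via a direct dimension count.

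First I would record two bounds. Lemma \ref{lem:dimension of syzygy of ind.inj.} gives $\dim\Omega(I)\leq 1$ for every indecomposable injective $\La$-module $I$. To obtain the dual bound I would pass to $\La^{\op}=\K Q^{\op}/\J^2$, which is again radical square zero, and use that the duality $D=\Hom(-,\K)$ sends the $n$-cluster tilting subcategory $\cC\subseteq\m\La$ to an $n$-cluster tilting subcategory $D(\cC)\subseteq\m\La^{\op}$ (the defining $\Ext$-vanishing is preserved because $\Ext^{i}_{\La^{\op}}(DM,DN)\isom\Ext^{i}_{\La}(N,M)$). Since $D$ interchanges projectives and injectives and satisfies $D(\Omega^{-}(P))\isom\Omega_{\La^{\op}}(DP)$, applying Lemma \ref{lem:dimension of syzygy of ind.inj.} to $\La^{\op}$ yields $\dim\Omega^{-}(P)\leq 1$ for every indecomposable projective $\La$-module $P$.

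Next I would bound $\delta^{-}(v)$. If $\delta^{-}(v)=0$ there is nothing to prove, so assume $\delta^{-}(v)\geq 1$. Then $I(v)$ has semisimple top $\bigoplus_{\alpha\colon w\to v}S(w)$, so its projective cover is $P(I(v))=\bigoplus_{\alpha\colon w\to v}P(w)$, and since $\dim P(w)=1+\delta^{+}(w)$ and $\dim I(v)=1+\delta^{-}(v)$, a dimension count gives
\[
\dim\Omega(I(v))=\sum_{\alpha\colon w\to v}\delta^{+}(w)-1.
\]
Combining this with $\dim\Omega(I(v))\leq 1$ forces $\sum_{\alpha\colon w\to v}\delta^{+}(w)\leq 2$. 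As every in-neighbour $w$ of $v$ satisfies $\delta^{+}(w)\geq 1$, the number $\delta^{-}(v)$ of arrows into $v$ is at most this sum, so $\delta^{-}(v)\leq 2$.

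Finally, the bound $\delta^{+}(v)\leq 2$ follows by the symmetric argument: assuming $\delta^{+}(v)\geq 1$, the injective hull of $P(v)$ is $\bigoplus_{\alpha\colon v\to w}I(w)$, an analogous count gives $\dim\Omega^{-}(P(v))=\sum_{\alpha\colon v\to w}\delta^{-}(w)-1$, and the bound $\dim\Omega^{-}(P)\leq 1$ together with $\delta^{-}(w)\geq 1$ for each out-neighbour $w$ yields $\delta^{+}(v)\leq 2$. I expect the only points requiring care to be the dimension computations of $\Omega(I(v))$ and $\Omega^{-}(P(v))$—in particular handling multiple arrows correctly and the degenerate cases where $I(v)$ is projective or $P(v)$ is injective, in which the relevant syzygy vanishes and the same inequality holds a fortiori—and the verification that $D$ preserves $n$-cluster tilting subcategories.
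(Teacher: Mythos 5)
Your proof is correct and follows essentially the same route as the paper: both arguments rest on the bound $\dim\Omega(I(v))\leq 1$ from Lemma \ref{lem:dimension of syzygy of ind.inj.} together with a dimension count on the projective cover of $I(v)$ (the paper uses the estimate $\dim P(I(v))\geq 2\delta^{-}(v)$ where you compute $\dim\Omega(I(v))=\sum_{\alpha\colon w\to v}\delta^{+}(w)-1$ exactly, which is the same count), and both obtain $\delta^{+}(v)\leq 2$ by duality, which you merely spell out in more detail via $\La^{\op}$.
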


\begin{proof}
We only show that $\delta^{-}(v)\leq 2$; the inequality $\delta^{+}(v)\leq 2$ follows dually. Consider the short exact sequence $0\longrightarrow \Omega(I(v)) \longrightarrow P(I(v)) \longrightarrow I(v) \longrightarrow 0$. Then 
\begin{equation}\label{eq:dim of syzygy of injective}
    \dim(I(v)) = \dim(P(I(v))) - \dim(\Omega(I(v))) \geq \dim(P(I(v)))-1,
\end{equation}
where the last inequality follows from Lemma \ref{lem:dimension of syzygy of ind.inj.}. Moreover, since $\La$ is a radical square zero bound quiver algebra, it immediately follows that $\dim(I(v))=\delta^{-}(v)+1$ and $\dim(P(I(v))) \geq 2\delta^{-}(v)$. Hence (\ref{eq:dim of syzygy of injective}) gives
\[\delta^{-}(v)+1 \geq 2\delta^{-}(v)-1,\]
or equivalently $2\geq \delta^{-}(v)$.
\end{proof}

We continue with showing that there are no multiple arrows between two vertices of $Q$.

\begin{lemma}\label{lem:no multiple arrows}
Let $\La=\K Q/\J^2$ and assume that $\cC\subseteq \m\La$ is an $n$-cluster tilting subcategory. Let $v,u\in Q_0$ be vertices. Then $\abs{\{\alpha\in Q_1 \mid s(\alpha)=v \text{ and } t(\alpha)=u\}}\leq 1$.
\end{lemma}

\begin{proof}
By Lemma \ref{lem:bound on number of arrows} we have that $\abs{\{\alpha\in Q_1 \mid s(\alpha)=v, t(\alpha)=u\}}\leq 2$. Assume towards a contradiction that there exist two arrows $\alpha_1:v\longrightarrow u$ and $\alpha_2:v\longrightarrow u$. Then, by Lemma \ref{lem:bound on number of arrows} we have that the composition series of $I(u)$ is $\begin{smallmatrix} v\;\;v \\ u \end{smallmatrix}$ while the composition series of $P(v)$ is $\begin{smallmatrix} v \\ u\;\;u \end{smallmatrix}$. Hence the projective cover of $I(u)$ is $P(I(u))\isom P(v)\oplus P(v)$ and
\begin{align*}
    \dim(\Omega(I(u))) &= \dim(P(I(u))) - \dim(I(u)) =2\dim(P(v)) - \dim(I(u)) =2\cdot 3-3 =3,
\end{align*}
which contradicts Lemma \ref{lem:dimension of syzygy of ind.inj.}.
\end{proof}

Next we show that no vertex can have degree $(0,2)$ or $(2,0)$ and, moreover, that if $n>2$, then no vertex can have degree $(2,2)$ either.

\begin{lemma}\label{lem:conditions for two arrows}
Let $\La=\K Q/\J^2$ and assume that $\cC\subseteq \m\La$ is an $n$-cluster tilting subcategory. Let $v\in Q_0$ be a vertex. 
\begin{enumerate}[label=(\alph*)]
        \item If $\delta^{+}(v)=2$, then $\delta^{-}(v)\geq 1$.
        \item If $\delta^{-}(v)=2$, then $\delta^{+}(v)\geq 1$.
        \item If $\delta^{-}(v)=\delta^{+}(v)=2$, then $n=2$.
    \end{enumerate}
\end{lemma}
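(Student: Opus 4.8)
The plan is to reduce all three parts to the dimension estimate $\dim(\Omega(I))\le 1$ for indecomposable injectives (Lemma \ref{lem:dimension of syzygy of ind.inj.}) together with its dual $\dim(\Omega^{-}(P))\le 1$ for indecomposable projectives, using two standard descriptions for $\La=\K Q/\J^2$: each indecomposable projective has the form $P(x)=\begin{smallmatrix}x\\ \cdots\end{smallmatrix}$ with $\dim P(x)=1+\delta^{+}(x)$ and radical $\Omega(S(x))=\bigoplus_{x\to u}S(u)$, and dually $\dim I(y)=1+\delta^{-}(y)$ with $\Omega^{-}(S(y))=\bigoplus_{w\to y}S(w)$. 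Throughout I use that $\La,D(\La)\in\cC$, that $\cC$ is $n$-rigid, and that there are no multiple arrows (Lemma \ref{lem:no multiple arrows}), so that distinct arrows out of (resp.\ into) a vertex land at (resp.\ come from) distinct vertices.

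For (a) I would argue by contradiction: if $\delta^{-}(v)=0$ then $v$ is a source, so $I(v)=S(v)$ is simple injective, and it is nonprojective since $\delta^{+}(v)=2>0$. Then $\Omega(I(v))=\Omega(S(v))=\bigoplus_{v\to u}S(u)$ has dimension $\delta^{+}(v)=2$, contradicting Lemma \ref{lem:dimension of syzygy of ind.inj.}. Part (b) follows dually, by passing to $\La^{\op}$ (again radical square zero, with $D\cC$ an $n$-cluster tilting subcategory): if $\delta^{+}(v)=0$ then $v$ is a sink, $P(v)=S(v)$ is projective and noninjective, and $\Omega^{-}(P(v))=\bigoplus_{w\to v}S(w)$ has dimension $\delta^{-}(v)=2$, contradicting the dual of Lemma \ref{lem:dimension of syzygy of ind.inj.}.

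Part (c) is the main case. Assume $\delta^{-}(v)=\delta^{+}(v)=2$, with in-neighbours $w_1,w_2$ and out-neighbours $u_1,u_2$; since $n\ge 2$ it suffices to rule out $n\ge 3$. First I would apply Lemma \ref{lem:dimension of syzygy of ind.inj.} to $I(v)=\begin{smallmatrix}w_1\; w_2\\ v\end{smallmatrix}$: its projective cover is $P(w_1)\oplus P(w_2)$, so $\dim\Omega(I(v))=\delta^{+}(w_1)+\delta^{+}(w_2)-1\le 1$, forcing $\delta^{+}(w_1)=\delta^{+}(w_2)=1$ (each $w_i$ already has an arrow to $v$). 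In particular $v$ carries no loop and $P(w_i)=\begin{smallmatrix}w_i\\ v\end{smallmatrix}$, from which a direct kernel computation gives $\Omega(I(v))\cong S(v)$. A dimension shift then yields $\Ext^{2}_{\La}(I(v),P(v))\cong\Ext^{1}_{\La}(S(v),P(v))$. Finally I would compute the latter from $0\to\rad P(v)\to P(v)\to S(v)\to 0$: here $\Hom_{\La}(\rad P(v),P(v))\cong\K^{2}$ (maps into $\soc P(v)=S(u_1)\oplus S(u_2)$), while $\Hom_{\La}(P(v),P(v))\cong\K$ has one-dimensional image spanned by the inclusion $\rad P(v)\hookrightarrow P(v)$, so $\Ext^{1}_{\La}(S(v),P(v))\cong\K\neq 0$. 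Since $I(v),P(v)\in\cC$, this nonzero $\Ext^{2}$ is incompatible with $n$-rigidity once $n\ge 3$, whence $n=2$.

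The main obstacle is the last computation, specifically verifying that the endomorphisms of $P(v)$ contribute only a one-dimensional subspace to $\Hom_{\La}(\rad P(v),P(v))$; this is exactly where the absence of a loop at $v$ is needed, since a loop would enlarge $\Hom_{\La}(P(v),P(v))$ and could trivialise the extension. That absence is itself delivered by the earlier step $\delta^{+}(w_1)=\delta^{+}(w_2)=1$, which simultaneously pins down $\Omega(I(v))\cong S(v)$ (rather than something larger) and makes the dimension shift usable. Thus the whole argument hinges on first squeezing the neighbours of $v$ via Lemma \ref{lem:dimension of syzygy of ind.inj.}, and only then extracting the forbidden $\Ext^{2}$.
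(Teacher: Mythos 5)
Your proofs of (a) and (b) coincide with the paper's: the paper also notes that $\delta^{-}(v)=0$ would force $I(v)=S(v)$ with $P(I(v))=P(v)$ of dimension $3$, giving $\dim(\Omega(I(v)))=2$ against Lemma \ref{lem:dimension of syzygy of ind.inj.}, and handles (b) by duality. For (c) you prove the same key fact as the paper, namely $\Ext^{2}_{\La}(I(v),P(v))\neq 0$, but by a different verification. The paper exhibits an explicit length-two Yoneda extension
$0\to P(v)\to \begin{smallmatrix} v \\ \;u_1\end{smallmatrix}\oplus\begin{smallmatrix} v \\ \;u_2\end{smallmatrix}\to\begin{smallmatrix} w_1 \\ v\end{smallmatrix}\oplus\begin{smallmatrix} w_2 \\ v\end{smallmatrix}\to I(v)\to 0$
glued along $S(v)$ and asserts its class is nonzero. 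You instead first squeeze the in-neighbours to get $\delta^{+}(w_1)=\delta^{+}(w_2)=1$ (which in the paper is the content of the subsequent Lemma \ref{lem:no zigzags}(a), so there is no circularity in rederiving it), deduce $\Omega(I(v))\isom S(v)$, dimension-shift to $\Ext^{1}_{\La}(S(v),P(v))$, and compute this to be one-dimensional from $0\to\rad P(v)\to P(v)\to S(v)\to 0$ using $\Hom_{\La}(\rad P(v),P(v))\isom\K^{2}$ and $\End_{\La}(P(v))\isom\K$. Your computation is correct (the absence of a loop at $v$, which you rightly flag as the crux, does follow from $\delta^{+}(w_i)=1$, and $u_1\neq u_2$ from Lemma \ref{lem:no multiple arrows} guarantees the socle of $P(v)$ has one-dimensional isotypic components); it has the advantage of making the nonvanishing of the $\Ext^{2}$ fully explicit, where the paper leaves the nontriviality of its Yoneda class to the reader, at the cost of a slightly longer argument.
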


\begin{proof}
\begin{enumerate}[label=(\alph*)]
    \item Since $\La=\K Q/\J^2$ and $\delta^{+}(v)=2$, it follows that $\dim(P(v))=3$. Assume towards a contradiction that $\delta^{-}(v)=0$. Then $I(v)=S(v)$ and $P(I(v))=P(v)$. Hence 
    \[\dim(\Omega(I(v))) = \dim(P(I(v))) - \dim(I(v)) = 3-1 = 2,\]
    which contradicts Lemma \ref{lem:dimension of syzygy of ind.inj.}.
    
    \item Dual to (a).
    
    \item Let $\alpha_1:v\longrightarrow u_1$, $\alpha_2:v\longrightarrow u_2$, $\beta_1:w_1\longrightarrow v$ and $\beta_2:w_2\longrightarrow v$ be the arrows starting and ending at $v$. Then the composition series of $P(v)$ is $\begin{smallmatrix} v \\ u_1\;\;u_2\end{smallmatrix}$ while the composition series of $I(v)$ is $\begin{smallmatrix} w_1\;\; w_2 \\ v \end{smallmatrix}$. For $i=1,2$, let $\pi_i: P(v) \longrightarrow \begin{smallmatrix} v \\ \;\;u_i\end{smallmatrix}$ be the projective cover of $\begin{smallmatrix} v \\ \;\;u_i \end{smallmatrix}$ and $\iota_i:\begin{smallmatrix} w_i \\ v \end{smallmatrix}\longrightarrow I(v)$ be the injective envelope of $\begin{smallmatrix} w_i \\ v \end{smallmatrix}$. Then it follows that 
    \[\coker\left(\begin{bmatrix} \pi_1 \\ \pi_2 \end{bmatrix}:P(v) \longrightarrow \begin{smallmatrix} v \\ \;\;u_1\end{smallmatrix} \oplus \begin{smallmatrix} v\\ \;\;u_2\end{smallmatrix}\right) \isom S(v) \isom \ker\left(\begin{bmatrix} \iota_1 & \iota_2 \end{bmatrix}: \begin{smallmatrix} w_1 \\ v \end{smallmatrix} \oplus \begin{smallmatrix} w_2 \\ v \end{smallmatrix} \longrightarrow I(v)\right).
    \]
    Hence the sequence
    \[\begin{tikzcd}[column sep=small, row sep=small, ampersand replacement=\&]
        0 \arrow[rr] \&\&
        P(v) \arrow[rr, "\begin{bmatrix} \pi_1 \\ \pi_2 \end{bmatrix}"] \&\& 
        \begin{smallmatrix} v \\ \;\;u_1\end{smallmatrix} \oplus \begin{smallmatrix} v\\ \;\;u_2\end{smallmatrix} \arrow[rr] \arrow[rd, two heads] \&\& 
        \begin{smallmatrix} w_1 \\ v \end{smallmatrix} \oplus \begin{smallmatrix} w_2 \\ v \end{smallmatrix} \arrow[rr, "\begin{bmatrix} \iota_1\;\;\; \iota_2 \end{bmatrix}"] \&\&
        I(v) \arrow[rr] \&\&
        0 \\
        {} \&\& 
        {} \&\& 
        {} \&
        S(v) \arrow[ru, hook]
        \& 
        {} \&\&
        {} \&\&
        {}
    \end{tikzcd}
    \]
    gives a nonzero element of $\Ext^2_{\La}(I(v),P(v))$. Since $I(v),P(v)\in\cC$ and $\Ext^2_{\La}(I(v),P(v))\neq 0$, it follows that $n\leq 2$. Since by assumption we have that $n\geq 2$, we conclude that $n=2$.\qedhere
\end{enumerate}
\end{proof}

Finally we examine how an arrow between two vertices affects the degree of the two vertices.

\begin{lemma}\label{lem:no zigzags}
Let $\La=\K Q/\J^2$ and assume that $\cC\subseteq \m\La$ is an $n$-cluster tilting subcategory.
\begin{enumerate}[label=(\alph*)]
    \item Let $w_1\longrightarrow v \longleftarrow w_2$ be a subquiver of $Q$. Then $\delta^{+}(w_1)=\delta^{+}(w_2)=1$.
    \item Let $u_1\longleftarrow v \longrightarrow u_2$ be a subquiver of $Q$. Then $\delta^{-}(u_1)=\delta^{-}(u_2)=1$.
\end{enumerate}
\end{lemma}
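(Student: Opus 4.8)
The plan is to prove part (a) by a direct dimension count on the syzygy of the injective $I(v)$, in exactly the style of the proofs of Lemmas \ref{lem:bound on number of arrows}--\ref{lem:conditions for two arrows}, and then to deduce part (b) from part (a) by passing to the opposite algebra.

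For part (a), I would first observe that since $w_1\longrightarrow v$ and $w_2\longrightarrow v$ are two arrows with common target, Lemma \ref{lem:no multiple arrows} forces $w_1\neq w_2$, and together with Lemma \ref{lem:bound on number of arrows} this gives $\delta^{-}(v)=2$. Hence $v$ has exactly the two incoming arrows from $w_1$ and $w_2$, so $I(v)$ has composition series $\begin{smallmatrix} w_1\;w_2 \\ v \end{smallmatrix}$ with $\dim(I(v))=3$; moreover, since $\La$ is radical square zero, the top of $I(v)$ is $S(w_1)\oplus S(w_2)$, and therefore $P(I(v))\isom P(w_1)\oplus P(w_2)$. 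Using $\dim(P(w_i))=1+\delta^{+}(w_i)$ and the short exact sequence $0\longrightarrow \Omega(I(v))\longrightarrow P(I(v))\longrightarrow I(v)\longrightarrow 0$, I obtain
\[
\dim(\Omega(I(v)))=\dim(P(w_1))+\dim(P(w_2))-\dim(I(v))=\delta^{+}(w_1)+\delta^{+}(w_2)-1.
\]
Lemma \ref{lem:dimension of syzygy of ind.inj.} bounds this by $1$, so $\delta^{+}(w_1)+\delta^{+}(w_2)\leq 2$. Because each $w_i$ carries the arrow $w_i\longrightarrow v$ we have $\delta^{+}(w_i)\geq 1$, and the two inequalities together force $\delta^{+}(w_1)=\delta^{+}(w_2)=1$, which is (a).

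For part (b), I would apply part (a) to the opposite algebra. The duality $D=\Hom(-,\K)$ sends the $n$-cluster tilting subcategory $\cC\subseteq\m\La$ to an $n$-cluster tilting subcategory $D(\cC)\subseteq\m\La^{\op}$, since $D$ is a contravariant equivalence satisfying $\Ext^{i}_{\La}(X,Y)\isom\Ext^{i}_{\La^{\op}}(DY,DX)$, which interchanges the two defining conditions of an $n$-cluster tilting subcategory. As $\La^{\op}\isom\K Q^{\op}/\J^2$, where $Q^{\op}$ is obtained by reversing all arrows of $Q$, the algebra $\La^{\op}$ is again radical square zero, and the subquiver $u_1\longleftarrow v\longrightarrow u_2$ of $Q$ becomes the subquiver $u_1\longrightarrow v\longleftarrow u_2$ of $Q^{\op}$. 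Applying part (a) inside $Q^{\op}$ yields $\delta^{+}_{Q^{\op}}(u_1)=\delta^{+}_{Q^{\op}}(u_2)=1$, and since $\delta^{+}_{Q^{\op}}(u)=\delta^{-}_{Q}(u)$ this is precisely $\delta^{-}(u_1)=\delta^{-}(u_2)=1$.

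The computations themselves are routine dimension counts, so the only point requiring genuine care is the reduction in part (b): one must verify that the duality actually transports the $n$-cluster tilting property to $\m\La^{\op}$ and that $\La^{\op}$ remains a radical square zero bound quiver algebra, so that Lemma \ref{lem:dimension of syzygy of ind.inj.}, and hence part (a), is applicable there. Both facts are standard, but they are the steps where an oversight would most easily occur.
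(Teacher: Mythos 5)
Your proposal is correct and takes essentially the same route as the paper: the same dimension count $\dim(\Omega(I(v)))=\dim(P(w_1))+\dim(P(w_2))-\dim(I(v))$ combined with the bound from Lemma \ref{lem:dimension of syzygy of ind.inj.} (the paper phrases it as a contradiction from $\delta^{+}(w_1)=2$, you derive the inequality $\delta^{+}(w_1)+\delta^{+}(w_2)\leq 2$ directly). Part (b) is handled in the paper simply by ``follows dually,'' and your explicit passage to $\La^{\op}$ is exactly the standard argument that phrase abbreviates.
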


\begin{proof}
We only prove (a); (b) follows dually. By symmetry it is enough to show that $\delta^{+}(w_1)=1$. Since by Lemma \ref{lem:bound on number of arrows} we have that $\delta^{+}(w_1)\leq 2$ and since by assumption we have that $\delta^{+}(w_1)\geq 1$, it is enough to show that $\delta^{+}(w_1)\neq 2$. Assume towards a contradiction that $\delta^{+}(w_1)=2$. Since $\La=\K Q/\J^2$, it follows from Lemma \ref{lem:bound on number of arrows} that $\dim(I(v))=3$, $\dim(P(w_1))=3$, $\dim(P(w_2))\geq 2$ and $P(I(v))\isom P(w_1)\oplus P(w_2)$. Then 
\[\dim(\Omega(I(v)))= \dim(P(I(v))) - \dim(I(v)) = \dim(P(w_1))+\dim(P(w_2))-\dim(I(v)) \geq 3+2-3 = 2,\]
which contradicts Lemma \ref{lem:dimension of syzygy of ind.inj.}.
\end{proof}

\begin{corollary}\label{cor:arrow bounds sum of degrees}
Let $\La=\K Q/\J^2$ and assume that $\cC\subseteq\m\La$ is an $n$-cluster tilting subcategory. Let $v\longrightarrow u$ be an arrow in $Q$. Then $\delta^{+}(v)+\delta^{-}(u)\leq 3$.
\end{corollary}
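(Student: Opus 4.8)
The plan is to deduce this immediately from the three preceding lemmas, reducing everything to a short case analysis on $\delta^{+}(v)$. First I would invoke Lemma~\ref{lem:bound on number of arrows} to record that $\delta^{+}(v)\leq 2$ and $\delta^{-}(u)\leq 2$; since the two quantities being bounded are each at most $2$, the only way their sum could exceed $3$ is if $\delta^{+}(v)=\delta^{-}(u)=2$. Thus it suffices to rule out this single configuration, and the entire argument will hinge on showing that $\delta^{+}(v)=2$ forces $\delta^{-}(u)=1$.

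The key step is as follows. Suppose $\delta^{+}(v)=2$. Besides the given arrow $v\longrightarrow u$, there is then a second arrow $v\longrightarrow u'$ starting at $v$. By Lemma~\ref{lem:no multiple arrows} there cannot be two distinct arrows from $v$ to $u$, so $u'\neq u$; consequently $u\longleftarrow v\longrightarrow u'$ is a genuine subquiver of $Q$. Applying Lemma~\ref{lem:no zigzags}(b) to this subquiver yields $\delta^{-}(u)=1$. Therefore, whenever $\delta^{+}(v)=2$ we have $\delta^{+}(v)+\delta^{-}(u)=2+1=3$, while when $\delta^{+}(v)\leq 1$ we trivially get $\delta^{+}(v)+\delta^{-}(u)\leq 1+2=3$; in either case the claimed bound holds.

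I expect essentially no hard part here: the substance of the statement is already contained in the zigzag restriction of Lemma~\ref{lem:no zigzags}, and the corollary merely repackages it as a numerical inequality along an edge. The only point requiring a moment's care is verifying that the second arrow out of $v$ lands at a vertex distinct from $u$, so that Lemma~\ref{lem:no zigzags}(b) genuinely applies; this is exactly what the no-multiple-arrows conclusion of Lemma~\ref{lem:no multiple arrows} guarantees. As a sanity check, one can equally run the symmetric argument starting from $\delta^{-}(u)=2$ and using Lemma~\ref{lem:no zigzags}(a) on the subquiver $v\longrightarrow u\longleftarrow w$ to conclude $\delta^{+}(v)=1$, which confirms that no obstacle arises in completing the proof.
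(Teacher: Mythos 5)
Your proof is correct and follows essentially the same route as the paper, which deduces the corollary immediately from Lemma \ref{lem:bound on number of arrows} and Lemma \ref{lem:no zigzags}. Your additional appeal to Lemma \ref{lem:no multiple arrows} to confirm that the second arrow out of $v$ lands at a vertex distinct from $u$ is a reasonable extra precision but does not change the argument.
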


\begin{proof}
Follows immediately by Lemma \ref{lem:bound on number of arrows} and Lemma \ref{lem:no zigzags}.
\end{proof}

The results of this section motivate the following definition.

\begin{definition}\label{def:n-pre-admissible}
A quiver $Q$ is called \emph{$n$-pre-admissible} if the following conditions are satisfied.
    \begin{enumerate}[label=(\roman*)]
        \item For all vertices $v\in Q_0$ we have $\delta(v)\in \{(0,0), (0,1), (1,0), (1,1), (1,2), (2,1)\}\cup E$, where $E=\begin{cases}\{(2,2)\}, &\mbox{\text{if $n=2$,}} \\ \varnothing, &\mbox{\text{otherwise.}} \end{cases}$
        \item There exist no multiple arrows between two vertices.
        \item For all arrows $v\longrightarrow u$ in $Q$ we have $\delta^{+}(v)+\delta^{-}(u)\leq 3$.
    \end{enumerate} 
\end{definition}

\begin{remark}\label{rem:n-pre-admissible is n-pre-admissible for any n}
It follows immediately by the definition of $n$-pre-admissible quivers that an $n$-pre-admissible quiver which has no vertex of degree $(2,2)$ is $n$-pre-admissible for any $n\geq 2$. 
\end{remark}

\begin{example}\label{ex:n-pre-admissible quivers}
\begin{enumerate}[label=(\alph*)]
    \item The quivers $A_m$ and $\tilde{A}_m$ are $n$-pre-admissible for any $n\geq 2$.
    \item The quiver
    \[\begin{tikzcd}[row sep=small, column sep=small]
        1 \arrow[r]  & 2 \arrow[r]\arrow[loop above] & 3 
	\end{tikzcd}\]
	is not $n$-pre-admissible for any $n\geq 2$ since there exists an arrow $2\longrightarrow 2$, but $\delta^{+}(2)+\delta^{-}(2)=4$.
    \item The quiver
    \[\begin{tikzcd}[row sep=small, column sep=small]
        1 \arrow[r] \arrow[loop above] & 2 \arrow[r] & 3 \arrow[r] \arrow[rd] & 4 \arrow[r] & 5 \arrow[loop above]\\
        {} & 8 \arrow[ru] & {} & 6 \arrow[ld] & {} \\
        {} & {} & 7 \arrow[lu] & {} & {}
	\end{tikzcd}\]
	is $2$-pre-admissible but not $n$-pre-admissible for $n\geq 3$ since $\delta(3)=(2,2)$.
	\item The quiver
	\[\begin{tikzcd}[row sep=small, column sep=small]
        1 \arrow[r] \arrow[rd] & 2\arrow[r] & 3 \arrow[r] & 4 \arrow[ddd] \\
        {} & 9 \arrow[d] & 12 \arrow[ru] & {} \\
        {} & 10 \arrow[ld] & 11 \arrow[u] & {} \\
        8 \arrow[uuu] & 7\arrow[l] & 6 \arrow[l] & 5 \arrow[l] \arrow[lu] 
	  \end{tikzcd}\]
	   is $n$-pre-admissible for any $n\geq 2$. 
\end{enumerate}
\end{example}

We have the following immediate result.

\begin{proposition}\label{prop:Q is n-pre-admissible}
Let $\La=\K Q/\J^2$ and assume that $\cC\subseteq \m\La$ is an $n$-cluster tilting subcategory. Then $Q$ is $n$-pre-admissible.
\end{proposition}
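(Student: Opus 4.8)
The plan is to observe that Proposition \ref{prop:Q is n-pre-admissible} is precisely a repackaging of the lemmas already proved in this subsection, so the proof amounts to checking that the three defining conditions of an $n$-pre-admissible quiver (Definition \ref{def:n-pre-admissible}) each follow from one of these lemmas. The only real work is to verify that condition (i), which restricts the allowable degree tuples $\delta(v)=(\delta^{-}(v),\delta^{+}(v))$, is equivalent to the combination of Lemma \ref{lem:bound on number of arrows} and Lemma \ref{lem:conditions for two arrows}.

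First I would dispose of conditions (ii) and (iii), since these match earlier results verbatim. Condition (ii), the absence of multiple arrows between two vertices, is exactly Lemma \ref{lem:no multiple arrows}. Condition (iii), the inequality $\delta^{+}(v)+\delta^{-}(u)\leq 3$ for every arrow $v\longrightarrow u$, is exactly Corollary \ref{cor:arrow bounds sum of degrees}. Thus both follow immediately.

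The main step is condition (i). By Lemma \ref{lem:bound on number of arrows} every vertex $v$ satisfies $\delta^{-}(v)\leq 2$ and $\delta^{+}(v)\leq 2$, so a priori $\delta(v)$ lies in the nine-element set $\{0,1,2\}\times\{0,1,2\}$. I would then rule out the three tuples not appearing in the allowed list. The tuple $(0,2)$ is excluded by Lemma \ref{lem:conditions for two arrows}(a): if $\delta^{+}(v)=2$ then $\delta^{-}(v)\geq 1$, so $(0,2)$ cannot occur. Dually, $(2,0)$ is excluded by Lemma \ref{lem:conditions for two arrows}(b). Finally, $(2,2)$ can occur only when $n=2$ by Lemma \ref{lem:conditions for two arrows}(c); this is exactly the case distinction encoded in the set $E$ in Definition \ref{def:n-pre-admissible}(i). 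Hence $\delta(v)$ lies in $\{(0,0),(0,1),(1,0),(1,1),(1,2),(2,1)\}\cup E$, which is precisely condition (i).

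Since an $n$-cluster tilting subcategory $\cC\subseteq\m\La$ is assumed to exist, all of the cited lemmas apply, and the three conditions of Definition \ref{def:n-pre-admissible} hold. Therefore $Q$ is $n$-pre-admissible, as claimed. I expect no genuine obstacle here: the proposition is an aggregation step, and the only point requiring care is matching the conditional appearance of $(2,2)$ in the degree list with the $n=2$ hypothesis supplied by Lemma \ref{lem:conditions for two arrows}(c).
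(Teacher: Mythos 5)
Your proposal is correct and matches the paper's proof, which likewise cites Lemma \ref{lem:bound on number of arrows}, Lemma \ref{lem:no multiple arrows}, Lemma \ref{lem:conditions for two arrows} and Corollary \ref{cor:arrow bounds sum of degrees} and notes that the result follows immediately. You merely spell out in more detail the bookkeeping that the paper leaves implicit, in particular how Lemma \ref{lem:conditions for two arrows} excludes the degree tuples $(0,2)$, $(2,0)$ and, for $n>2$, $(2,2)$.
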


\begin{proof}
Follows immediately by Lemma \ref{lem:bound on number of arrows}, Lemma \ref{lem:conditions for two arrows}, Lemma \ref{lem:no multiple arrows} and Corollary \ref{cor:arrow bounds sum of degrees}.
\end{proof}

Radical square zero bound quiver algebras with $n$-pre-admissible quivers are especially easy to study from the point of view of representation theory. Indeed, we have the following result.

\begin{theorem}\label{thrm:representation-finite string algebra}
Let $Q$ be an $n$-pre-admissible quiver and let $\La=\K Q/\J^2$.
\begin{enumerate}[label=(\alph*)]
    \item $\La$ is a string algebra.
    \item If $\ell_k\ldots \ell_1$ is a string in $\La$, then $k\leq 2$. In particular, there are no bands in $\La$.
    \item $\La$ is rep\-re\-sen\-ta\-tion-fi\-nite.
    \item If $M$ is an indecomposable $\La$-module and $M$ is not simple, then $M$ is projective or injective.
\end{enumerate}
\end{theorem}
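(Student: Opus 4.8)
The plan is to verify each of the four conditions in Definition \ref{def:string algebra}, then exploit the tight degree restrictions of $n$-pre-admissibility to control strings, and finally read off representation-finiteness and the structure of indecomposables. For part (a), conditions (S1) and (S2) are essentially immediate from the definition of $n$-pre-admissibility: (S1) is exactly the degree bound $\delta^{-}(v),\delta^{+}(v)\leq 2$ guaranteed by condition (i), and (S3) holds trivially since $\cI=\J^2$ is generated by the paths of length $2$. The content is in (S2): I must show that for each arrow $\alpha$ there is at most one arrow $\beta$ with $\beta\alpha\notin\J^2$ and at most one $\gamma$ with $\alpha\gamma\notin\J^2$. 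But since $\La=\K Q/\J^2$, \emph{every} composite $\beta\alpha$ of two arrows lies in $\J^2$, so there are in fact \emph{zero} such arrows, and (S2) holds vacuously. Thus (a) reduces to collecting these observations.

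For part (b), the key point is again that $\cI=\J^2$ kills all paths of length $\geq 2$. I would argue directly from the definition of a string: a string $\bm{\ell}=\ell_k\ldots\ell_1$ with $\ell_i\in Q_1\cup Q_1^{-}$ cannot contain two consecutive \emph{direct} arrows $\ell_{i+1}\ell_i$ (both in $Q_1$), since such a subpath would be a path of length $2$ lying in $\J^2$; dually it cannot contain two consecutive inverse arrows. Combined with the string condition $\ell_i\neq\ell_{i+1}^{-}$, consecutive letters must alternate in a constrained way, and I expect the upshot is that a string of length $\geq 3$ is forced to contain either two consecutive direct or two consecutive inverse arrows, a contradiction. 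The cleanest formulation: at any interior vertex of the string we have one incoming and one outgoing letter, and the forbidden configurations (length-$2$ direct subpath or its inverse) rule out length $\geq 3$. The nonexistence of bands then follows since a band must have length $\geq 1$ and $\bm{\ell}^q$ a string for all $q$, which is impossible once strings are capped at length $2$.

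Parts (c) and (d) should follow by combining (b) with the classification of indecomposables over string algebras from \cite{ButRin}. Since every indecomposable is a string or band module, and there are no bands and only finitely many strings (the quiver is finite and strings have length $\leq 2$), there are finitely many indecomposables, giving (c). For (d), I would enumerate the string modules by length: strings of length $0$ give the simples $S(v)$; strings of length $1$ are single arrows $v\to u$ and give the length-$2$ uniserial modules; strings of length $2$ have the form $\ell_2\ell_1$ with one direct and one inverse letter (by part (b)), so they are either of shape $w_1\to v\leftarrow w_2$ (a module with top $S(w_1)\oplus S(w_2)$ and socle $S(v)$, which is the injective $I(v)$ when $\delta^{-}(v)=2$) or of shape $u_1\leftarrow v\to u_2$ (the projective $P(v)$ when $\delta^{+}(v)=2$). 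A nonsimple indecomposable thus has length $2$ or $3$; I would check that each such module is either projective or injective by comparing its string against the projectives $P(v)$ and injectives $I(v)$, whose structure is pinned down by the degrees.

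The main obstacle I anticipate is part (d): matching the length-$2$ strings to projectives and injectives requires care, because a length-$2$ string of the ``zigzag'' shape is injective (resp.\ projective) only when the central vertex actually attains ingoing (resp.\ outgoing) degree $2$, and Lemma \ref{lem:no zigzags} is exactly what guarantees the outer vertices then have the complementary degree $1$, so that no further extension of the string is possible and the module is genuinely the full injective or projective. I would lean on Lemma \ref{lem:no zigzags} and Lemma \ref{lem:conditions for two arrows} to ensure the bookkeeping is consistent, and on Lemma \ref{lem:bound on number of arrows} to fix the dimensions of $P(v)$ and $I(v)$; the length-$1$ strings corresponding to an arrow $v\to u$ are simultaneously $P(v)$ when $\delta^{+}(v)=1$ and $I(u)$ when $\delta^{-}(u)=1$, and condition (iii) of $n$-pre-admissibility ensures at least one of these holds, so these modules too are projective or injective.
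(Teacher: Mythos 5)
Your overall strategy matches the paper's: verify the string-algebra axioms, bound the lengths of strings, then invoke the Butler--Ringel classification; parts (a) and (c), and the matching of length-$1$ and length-$2$ strings to projectives and injectives in (d), are essentially the paper's argument. The genuine gap is in part (b). You correctly observe that a string cannot contain two consecutive direct letters or two consecutive inverse letters (all paths of length two lie in $\J^2$), but then assert that ``the upshot is that a string of length $\geq 3$ is forced to contain either two consecutive direct or two consecutive inverse arrows.'' That is false: the letters of a length-$3$ string can perfectly well alternate, e.g.\ $\ell_3\ell_2\ell_1=\alpha\beta^{-}\gamma$ with $\alpha,\beta,\gamma\in Q_1$, and alternating strings of arbitrary length occur in general radical square zero string algebras (take a quiver of the shape $\cdot\to\cdot\leftarrow\cdot\to\cdot\leftarrow\cdots$). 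What actually kills $\alpha\beta^{-}\gamma$ is the combinatorics of $n$-pre-admissibility, which your argument for (b) never invokes: here $\gamma$ and $\beta$ both end at $t(\beta)$ while $\alpha$ and $\beta$ both start at $s(\beta)$, so either $\alpha=\gamma$, which produces a double arrow from $s(\beta)$ to $t(\beta)$ and contradicts Definition \ref{def:n-pre-admissible}(ii), or $\alpha\neq\gamma$, which gives $\delta^{+}(s(\beta))+\delta^{-}(t(\beta))\geq 4$ for the arrow $\beta$ and contradicts Definition \ref{def:n-pre-admissible}(iii); the case $\alpha^{-}\beta\gamma^{-}$ is symmetric. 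Without this step the bound $k\leq 2$ is unproven, and (c) and (d) collapse with it.

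A secondary point on part (d): you propose to lean on Lemma \ref{lem:bound on number of arrows}, Lemma \ref{lem:no zigzags} and Lemma \ref{lem:conditions for two arrows}, but all of these assume the existence of an $n$-cluster tilting subcategory, which is not a hypothesis of this theorem. The theorem must hold for every $n$-pre-admissible quiver, since it is later applied in the sufficiency direction of the main result. The facts you need are precisely conditions (i)--(iii) of Definition \ref{def:n-pre-admissible}, so the repair is simply to cite the definition rather than those lemmas.
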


\begin{proof}
\begin{enumerate}[label=(\alph*)]
    \item Since $Q$ is $n$-pre-admissible, we have that $\delta^{+}(v)\leq 2$ and $\delta^{-}(v)\leq 2$ for every vertex $v\in Q_0$. Since $\J^2$ is generated by all paths of length $2$, it immediately follows that $\La$ is a string algebra.
    
    \item Let $\ell_k\ldots \ell_1$ be a string in $\La$ and assume towards a contradiction that $k\geq 3$. Consider the string $\ell_3\ell_2\ell_1$. Since every path of length two is in $\J^2$, it follows that $\ell_1$ and $\ell_2$ cannot be both direct or both inverse letters. Similarly $\ell_3$ and $\ell_2$ cannot be both direct or both inverse letters. Hence $\ell_3\ell_2\ell_1$ is either of the form $\alpha\beta^{-}\gamma$ or of the form $\alpha^{-}\beta\gamma^{-}$ for some arrows $\alpha,\beta,\gamma\in Q_1$ with $\alpha\neq \beta$ and $\gamma\neq \beta$. If $\alpha=\gamma$, then we readily get that $s(\alpha)=s(\beta)$ and $t(\alpha)=t(\beta)$, which contradicts Definition \ref{def:n-pre-admissible}(ii). Otherwise, if $\alpha\neq\gamma$, then we readily get that
    \[
    \delta^{+}(s(\beta))+\delta^{-}(t(\beta))\geq 4,
    \]
    which contradicts Definition \ref{def:n-pre-admissible}(iii). Hence $k\leq 2$. Since the length of a string is bounded by $2$, it follows that there are no bands in $\La$.
    
    \item Follows immediately from (b) since indecomposable $\La$-modules are classified by string and band modules, see \cite[Section 3]{ButRin}.
    
    \item Let $M$ be an indecomposable $\La$-module and assume that $M$ is not simple. From (b) it follows that $M$ is isomorphic to a string module $M(\bm{\ell})$ where $\bm{\ell}$ has length at most $2$ (for the definition of $M(\bm{\ell})$ we refer to \cite[Section 3]{ButRin}). Since $M$ is not simple, it follows that $\bm{\ell}$ has length different than $0$ and so $\bm{\ell}$ has length $1$ or $2$. If the length of $\bm{\ell}$ is $1$, then $\bm{\ell}=\alpha$ for some arrow $\alpha\in Q_1$ (the modules $M(\alpha)$ and $M(\alpha^{-})$ are isomorphic). Let $\alpha:v\longrightarrow u$. Then $\delta^{+}(v)\in\{1,2\}$ and using Definition \ref{def:n-pre-admissible}(iii) and the fact that $\La$ is a radical square zero algebra it is easy to see that
    \[
    M(\alpha) \isom \begin{cases} P(v), &\mbox{if $\delta^{+}(v)=1$,} \\ I(u), &\mbox{if $\delta^{+}(v)=2$.} \end{cases}
    \]
    If the length of $\bm{\ell}$ is $2$, and since $\La$ is a radical square zero algebra, then $\bm{\ell}=\alpha\beta^{-}$ or $\bm{\ell}=\alpha^{-}\beta$ for some arrows $\alpha,\beta\in Q_1$. Let $\alpha:v\longrightarrow u$. Similarly to the case of length $1$, it is easy to see that
    \[
    M(\bm{\ell})\isom \begin{cases} P(v), &\mbox{if $\bm{\ell}=\alpha\beta^{-}$,} \\
    I(u), &\mbox{if $\bm{\ell}=\alpha^{-}\beta,$}\end{cases}
    \]
    and so in both cases $M$ is projective or injective.\qedhere
\end{enumerate}
\end{proof}

\begin{example}\label{ex:AR-quivers of n-pre-admissible quivers}
\begin{enumerate}[label=(\alph*)]
    \item Let $Q$ be as in Example \ref{ex:n-pre-admissible quivers}(c). The Aus\-lan\-der--Rei\-ten quiver of $\K Q/\J^2$ is
    \[
	    \begin{tikzpicture}[scale=0.9, transform shape, baseline={(current bounding box.center)}]
        \tikzstyle{nct}=[circle, minimum width=0.6cm, draw, inner sep=0pt, text centered, scale=0.9]
        \tikzstyle{nout}=[circle, minimum width=6pt, draw=none, inner sep=0pt, scale=0.9]
        
        \node[nout] (110) at (7,0.7) {$\begin{smallmatrix} 1 \end{smallmatrix}$};
        \node[nout] (112) at (8.4, 0.7) {$\begin{smallmatrix} 1 \\ 2\;\; \end{smallmatrix}$};
        
        \node[nout] (211) at (7.7,1.4) {$\begin{smallmatrix} 1 \\ 2\;\; 1\end{smallmatrix}$};
        \node[nout] (213) at (9.1,1.4) {$\begin{smallmatrix} 1 \end{smallmatrix}$};
        
        \node[nout] (34) at (2.8,2.1) {$\begin{smallmatrix} 6 \end{smallmatrix}$};
        \node[nout] (36) at (4.2,2.1) {$\begin{smallmatrix} 3 \\ 4\;\; \end{smallmatrix}$};
        \node[nout] (38) at (5.7,2.1) {$\begin{smallmatrix} \;\;8 \\ 3 \end{smallmatrix}$};
        \node[nout] (310) at (7,2.1) {$\begin{smallmatrix} 2 \end{smallmatrix}$};
        \node[nout] (312) at (8.4,2.1) {$\begin{smallmatrix} 1 \\ \;\; 1 \end{smallmatrix}$};
        
        \node[nout] (45) at (3.5,2.8) {$\begin{smallmatrix} 3 \\ 4\;\;6 \end{smallmatrix}$};
        \node[nout] (47) at (4.9,2.8) {$\begin{smallmatrix} 3 \end{smallmatrix}$};
        \node[nout] (49) at (6.3,2.8) {$\begin{smallmatrix} 2\;\;8 \\ 3 \end{smallmatrix}$};
        
        \node[nout] (52) at (1.4,3.5) {$\begin{smallmatrix} \;\;5 \\ 5  \end{smallmatrix}$};
        \node[nout] (54) at (2.8,3.5) {$\begin{smallmatrix} 4 \end{smallmatrix}$};
        \node[nout] (56) at (4.2,3.5) {$\begin{smallmatrix} 3 \\ \;\; 6 \end{smallmatrix}$};
        \node[nout] (58) at (5.6,3.5) {$\begin{smallmatrix} 2 \;\; \\  3 \end{smallmatrix}$};
        \node[nout] (510) at (7,3.5) {$\begin{smallmatrix} 8 \end{smallmatrix}$};
        \node[nout] (512) at (8.4,3.5) {$\begin{smallmatrix} 7 \end{smallmatrix}$};
        \node[nout] (514) at (9.8,3.5) {$\begin{smallmatrix} 6. \end{smallmatrix}$};
        
        \node[nout] (61) at (0.7,4.2) {$\begin{smallmatrix} 5 \end{smallmatrix}$};
        \node[nout] (63) at (2.1,4.2) {$\begin{smallmatrix} 4 \;\; 5 \\ 5 \end{smallmatrix}$};
        \node[nout] (611) at (7.7,4.2) {$\begin{smallmatrix} 7 \\ 8 \end{smallmatrix}$};
        \node[nout] (613) at (9.1,4.2) {$\begin{smallmatrix} 6 \\ 7 \end{smallmatrix}$};
        
        \node[nout] (72) at (1.4,4.9) {$\begin{smallmatrix} 4\;\; \\ 5 \end{smallmatrix}$};
        \node[nout] (74) at (2.8,4.9) {$\begin{smallmatrix} 5 \end{smallmatrix}$};
        
        \draw[->] (61) to (72);
        \draw[->] (61) to (52);
        \draw[->] (72) to (63);
        \draw[->] (52) to (63);
        \draw[->] (63) to (74);
        \draw[->] (63) to (54);
        \draw[->] (54) to (45);
        \draw[->] (34) to (45);
        \draw[->] (45) to (56);
        \draw[->] (45) to (36);
        \draw[->] (56) to (47);
        \draw[->] (36) to (47);
        \draw[->] (47) to (58);
        \draw[->] (47) to (38);
        \draw[->] (58) to (49);
        \draw[->] (38) to (49);
        \draw[->] (49) to (510);
        \draw[->] (49) to (310);
        \draw[->] (510) to (611);
        \draw[->] (110) to (211);
        \draw[->] (310) to (211);
        \draw[->] (611) to (512);
        \draw[->] (211) to (312);
        \draw[->] (211) to (112);
        \draw[->] (512) to (613);
        \draw[->] (312) to (213);
        \draw[->] (112) to (213);
        \draw[->] (613) to (514);
        
        \draw[loosely dotted] (61.east) -- (63);
        \draw[loosely dotted] (72.east) -- (74);
        \draw[loosely dotted] (52.east) -- (54);
        \draw[loosely dotted] (54.east) -- (56);
        \draw[loosely dotted] (34.east) -- (36);
        \draw[loosely dotted] (45.east) -- (47);
        \draw[loosely dotted] (47.east) -- (49);
        \draw[loosely dotted] (58.east) -- (510);
        \draw[loosely dotted] (38.east) -- (310);
        \draw[loosely dotted] (510.east) -- (512);
        \draw[loosely dotted] (310.east) -- (312);
        \draw[loosely dotted] (110.east) -- (112);
        \draw[loosely dotted] (211.east) -- (213);
        \draw[loosely dotted] (512.east) -- (514);
    \end{tikzpicture}
    \]
    
    \item Let $Q$ be as in Example \ref{ex:n-pre-admissible quivers}(d). The Aus\-lan\-der--Rei\-ten quiver of $\K Q/\J^2$ is
    \[
    \begin{tikzpicture}[scale=0.9, transform shape, baseline={(current bounding box.center)}]
        \tikzstyle{nct}=[circle, minimum width=0.6cm, draw, inner sep=0pt, text centered, scale=0.9]
        \tikzstyle{nout}=[circle, minimum width=6pt, draw=none, inner sep=0pt, scale=0.9]
    
    \node[nout] (1) at (0,2.1) {$\begin{smallmatrix}
    1
    \end{smallmatrix}$};
    \node[nout] (81) at (0.7,2.8) {$\begin{smallmatrix}
    8 \\ 1
    \end{smallmatrix}$};
    \node[nout] (8) at (1.4,2.1) {$\begin{smallmatrix}
    8
    \end{smallmatrix}$};
    \node[nout] (108) at (2.1,2.8) {$\begin{smallmatrix}
    10\;\;\;\;\; \\  8
    \end{smallmatrix}$};
    \node[nout] (78) at (2.1,1.4) {$\begin{smallmatrix}
    \;\;\;7 \\ 8
    \end{smallmatrix}$};
    \node[nout] (1078) at (2.8,2.1) {$\begin{smallmatrix}
    10 \;\; 7\; \\ 8
    \end{smallmatrix}$};
    
    \node[nout] (7) at (3.5, 2.8) {$\begin{smallmatrix}
    7
    \end{smallmatrix}$};
    \node[nout] (67) at (4.2,3.5) {$\begin{smallmatrix}
    6 \\ 7
    \end{smallmatrix}$};
    \node[nout] (11) at (4.9,4.2) {$\begin{smallmatrix}
    11
    \end{smallmatrix}$};
    \node[nout] (6) at (4.9,2.8) {$\begin{smallmatrix}
    6
    \end{smallmatrix}$};
    \node[nout] (5611) at (5.6,3.5) {$\begin{smallmatrix}
    5\\ 11 \;\; 6\;
    \end{smallmatrix}$};
    \node[nout] (56) at (6.3,4.2) {$\begin{smallmatrix}
    5 \\ \;\;\;6
    \end{smallmatrix}$};
    \node[nout] (511) at (6.3,2.8)
    {$\begin{smallmatrix}
    5 \\ 11\;\;\;\;
    \end{smallmatrix}$};
    \node[nout] (5) at (7,3.5) {$\begin{smallmatrix}
    5
    \end{smallmatrix}$};
    \node[nout] (45) at (7.7,4.2) {$\begin{smallmatrix}
    4 \\ 5
    \end{smallmatrix}$};
    \node[nout] (4) at (8.4, 3.5) {$\begin{smallmatrix}
    4
    \end{smallmatrix}$};
    \node[nout] (34) at (9.1,4.2) {$\begin{smallmatrix}
    3\;\;\; \\ 4
    \end{smallmatrix}$};
    \node[nout] (124) at (9.1,2.8) {$\begin{smallmatrix}
    \;\;\;\;12 \\ 4
    \end{smallmatrix}$};
    \node (3124) at (9.8,3.5) {$\begin{smallmatrix}
    \;3 \;\;12 \\ 4
    \end{smallmatrix}$};
    \node[nout] (12) at (10.5,4.2) {$\begin{smallmatrix}
    12
    \end{smallmatrix}$};
    \node[nout] (3) at (10.5,2.8) {$\begin{smallmatrix}
    3
    \end{smallmatrix}$};
    \node[nout] (1112) at (11.2,4.9) {$\begin{smallmatrix}
    11 \\ 12
    \end{smallmatrix}$};
    \node[nout] (23) at (11.2,2.1) {$\begin{smallmatrix}
    2 \\ 3
    \end{smallmatrix}$};
    \node[nout] (11b) at (11.9,4.2) {$\begin{smallmatrix}
    11
    \end{smallmatrix}$};
    \node[nout] (2b) at (11.9,2.8) {$\begin{smallmatrix}
    2.
    \end{smallmatrix}$};
    
    \node[nout] (10)  at (3.5,1.4) {$\begin{smallmatrix}
    10
    \end{smallmatrix}$};
    \node[nout] (910) at (4.2,0.7) {$\begin{smallmatrix}
    9 \\ 10
    \end{smallmatrix}$};
    \node[nout] (9) at (4.9,1.4) {$\begin{smallmatrix}
    9
    \end{smallmatrix}$};
    \node[nout] (2) at (4.9,0) {$\begin{smallmatrix}
    2
    \end{smallmatrix}$};
    \node[nout] (129) at (5.6,0.7) {$\begin{smallmatrix}
    1 \\ 9\;\;2
    \end{smallmatrix}$};
    \node[nout] (1 2) at (6.3,1.4) {$\begin{smallmatrix}
    1 \\ \;\;\;2
    \end{smallmatrix}$};
    \node[nout] (19) at (6.3,0) {$\begin{smallmatrix}
    1 \\ 9\;\;\;
    \end{smallmatrix}$};
    \node[nout] (1b) at (7,0.7) {$\begin{smallmatrix}
    1
    \end{smallmatrix}$};

    \draw[->] (1) to (81);
    \draw[->] (81) to (8);
    \draw[->] (8) to (108);
    \draw[->] (8) to (78);
    \draw[->] (108) to (1078);
    \draw[->] (78) to (1078);
    \draw[->] (1078) to (7);
    \draw[->] (1078) to (10);
    
    \draw[->] (7) to (67);
    \draw[->] (67) to (6);
    \draw[->] (11) to (5611);
    \draw[->] (6) to (5611);
    \draw[->] (5611) to (56);
    \draw[->] (5611) to (511);
    \draw[->] (56) to (5);
    \draw[->] (511) to (5);
    \draw[->] (5) to (45);
    \draw[->] (45) to (4);
    \draw[->] (4) to (34);
    \draw[->] (4) to (124);
    \draw[->] (34) to (3124);
    \draw[->] (124) to (3124);
    \draw[->] (3124) to (12);
    \draw[->] (3124) to (3);
    \draw[->] (12) to (1112);
    \draw[->] (3) to (23);
    \draw[->] (1112) to (11b);
    \draw[->] (23) to (2b);
    
    \draw[->] (10) to (910);
    \draw[->] (910) to (9);
    \draw[->] (9) to (129);
    \draw[->] (2) to (129);
    \draw[->] (129) to (1 2);
    \draw[->] (129) to (19);
    \draw[->] (1 2) to (1b);
    \draw[->] (19) to (1b);
    
    \draw[loosely dotted] (1.east) -- (8);
    \draw[loosely dotted] (8.east) -- (1078);
    \draw[loosely dotted] (108.east) -- (7);
    \draw[loosely dotted] (78.east) -- (10);
    
    \draw[loosely dotted] (7.east) -- (6);
    \draw[loosely dotted] (6.east) -- (511);
    \draw[loosely dotted] (11.east) -- (56);
    \draw[loosely dotted] (5611.east) -- (5);
    \draw[loosely dotted] (5.east) -- (4);
    \draw[loosely dotted] (4.east) -- (3124);
    \draw[loosely dotted] (34.east) -- (12);
    \draw[loosely dotted] (124.east) -- (3);
    \draw[loosely dotted] (12.east) -- (11b);
    \draw[loosely dotted] (3.east) -- (2b);
    
    \draw[loosely dotted] (10.east) -- (9);
    \draw[loosely dotted] (9.east) -- (1 2);
    \draw[loosely dotted] (2.east) -- (19);
    \draw[loosely dotted] (129.east) -- (1b);
    
    \end{tikzpicture}
    \]
\end{enumerate}

\end{example}

\subsection{Flow paths}

We have seen that if $\La=\K Q/\J^2$ and $\cC\subseteq \m\La$ is an $n$-cluster tilting subcategory, then $Q$ is $n$-pre-admissible. The opposite is not true in general. It turns out that there are additional properties that $Q$ must satisfy. To describe these properties we need to consider certain paths in $Q$.

\begin{definition}\label{def:flow path}
Let $Q$ be an $n$-pre-admissible quiver and let $k\geq 2$. A \emph{($k$-)flow path $\mathbf{v}$ in Q} is a path 
\begin{equation}\label{eq:flow path}
\mathbf{v} = 
    \begin{tikzcd} 
        v_1 \arrow[r] & v_2 \arrow[r] & \cdots \arrow[r] & v_{k-1} \arrow[r] & v_k,
    \end{tikzcd}
\end{equation}
such that $\delta(v_s)=(1,1)$ if and only if $1<s<k$.
\end{definition}

Notice that since $Q$ is $n$-pre-admissible, there are no multiple arrows between two vertices. Hence a flow path is defined uniquely by its vertices and we do not need to label arrows in a flow path. For a flow path $\mathbf{v}$ in $Q$ we use $v_i$ to denote its vertices as in (\ref{eq:flow path}). Moreover, in what follows we write ``$k$-flow path'' when the length $k$ of the flow path is important and ``flow path'' otherwise. Many of the results presented in this section have a dual version which although we omit for brevity, we sometimes use. We first study the case where there are no flow paths in $Q$.

\begin{lemma}\label{lem:the existence of flow paths}
Let $Q$ be an $n$-pre-admissible quiver. Then there exists a flow path in $Q$ if and only if $Q\neq A_1$ and $Q\neq \tilde{A}_m$ for some $m\geq 1$.
\end{lemma}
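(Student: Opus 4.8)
The plan is to prove both implications, treating the forward (``only if'') direction as routine and concentrating on the reverse construction. For the routine direction I argue by contraposition. If $Q=A_1$, then $Q$ has no arrows and hence admits no path of positive length, so certainly no flow path. If $Q=\tilde{A}_m$, then every vertex has degree $(1,1)$, so no path in $Q$ can have an endpoint of degree $\neq(1,1)$, and again no flow path exists. Together these settle the implication that the existence of a flow path forces $Q\neq A_1$ and $Q\neq\tilde{A}_m$.

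For the reverse direction, assume $Q\neq A_1$ and $Q\neq\tilde{A}_m$. The first step is to locate a starting vertex $v$ with $\delta(v)\neq(1,1)$. Indeed, if every vertex had degree $(1,1)$, then each vertex would have exactly one incoming and one outgoing arrow, so following outgoing arrows would force the connected quiver $Q$ to be a single oriented cycle, that is $Q=\tilde{A}_m$, contradicting the hypothesis. Moreover, since $Q$ is connected and $Q\neq A_1$, this $v$ cannot have degree $(0,0)$, so $\delta^{+}(v)\geq 1$ or $\delta^{-}(v)\geq 1$. By passing to the opposite quiver if necessary (which is again $n$-pre-admissible, since the conditions of Definition \ref{def:n-pre-admissible} are preserved under reversing all arrows, and which sends flow paths to flow paths by reversal), I may assume $\delta^{+}(v)\geq 1$.

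The second step is a greedy forward construction. I set $v_1=v$, choose an arrow $v_1\to v_2$, and keep extending: as long as the current terminal vertex $v_i$ with $i\geq 2$ satisfies $\delta(v_i)=(1,1)$, it has a unique outgoing arrow $v_i\to v_{i+1}$, which I append. Since $Q$ is finite, this procedure either halts at some $v_k$ with $\delta(v_k)\neq(1,1)$, in which case $v_1\to\cdots\to v_k$ is a flow path (and $k\geq 2$), or it revisits a vertex. In the latter case I claim the first repeated vertex must be $v_1$: each interior vertex $v_i$ with $i\geq 2$ has in-degree $1$, so if $v_j=v_i$ were the first repeat with $i\geq 2$, the incoming arrows $v_{i-1}\to v_i$ and $v_{j-1}\to v_j$ would force $v_{i-1}=v_{j-1}$, an earlier repetition. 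Hence the procedure closes up into a walk $v_1\to v_2\to\cdots\to v_{j-1}\to v_1$ whose interior vertices all have degree $(1,1)$ and whose coincident endpoints have degree $\neq(1,1)$; this is a (closed) flow path.

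The main obstacle I anticipate is precisely this closing-up case, and the essential point is to recognize that a closed walk of the above form genuinely satisfies the flow-path condition and must be admitted. This is not a mere technicality: when $n=2$ one can glue two oriented cycles at a single vertex of degree $(2,2)$ to obtain an $n$-pre-admissible quiver with $Q\neq A_1,\tilde{A}_m$ whose only vertex of degree $\neq(1,1)$ is the glueing point, forcing every flow path to have both endpoints equal and hence to be closed. The delicate part is therefore the in-degree bookkeeping that rules out the greedy path terminating among interior $(1,1)$-vertices without either reaching a new non-$(1,1)$ vertex or returning to the start; everything else reduces to bounded-degree combinatorics supplied by $n$-pre-admissibility.
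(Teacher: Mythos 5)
Your proof is correct and follows essentially the same route as the paper's: locate a vertex of degree $\neq(1,1)$ (which exists since $Q$ is connected and $Q\neq\tilde{A}_m$, and cannot have degree $(0,0)$ since $Q\neq A_1$), then take a minimal path from it to the next vertex of degree $\neq(1,1)$. Your explicit termination argument via the in-degree-one pigeonhole, including the case where the walk closes up at $v_1$, simply makes precise the paper's brief appeal to finiteness.
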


\begin{proof}
It is clear by the definition of a flow path that if there exists a flow path in $Q$, then $Q\neq A_1$ and $Q\neq \tilde{A}_m$. For the other direction, assume that $Q\neq A_1$ and $Q\neq\tilde{A}_m$ and we show that there exists a flow path in $Q$. Since $Q$ is connected and $Q\neq \tilde{A}_m$, there exists a vertex $v_1$ in $Q$ with degree $\delta(v_1)\neq (1,1)$. Since $Q\neq A_1$, we have $\delta(v_1)\neq(0,0)$. Since $Q$ is finite, any path starting or ending at $v_1$ eventually passes through a vertex $v_k$ with $\delta(v_k)\neq (1,1)$; let $\mathbf{v}$ be a minimal such path. Then $\mathbf{v}$ is a flow path by definition.
\end{proof}

\begin{proposition}\label{prop:the cases A_1 and A_m tilde}
Let $Q$ be an $n$-pre-admissible quiver and let $\La=\K Q/\J^2$.
\begin{enumerate}[label=(\alph*)]
    \item If $Q=A_1$, then $\cC$ is an $n$-cluster tilting subcategory of $\m\La$ if and only if $\cC=\mod\La=\add(\La)$.
    \item If $Q=\tilde{A}_m$ for some $m\geq 1$, then $\cC$ is an $n$-cluster tilting subcategory of $\m\La$ if and only if $n\divides m$ and $\cC=\add\left(\La\oplus \left(\bigoplus_{j= 0}^{\frac{m}{n}-1}\tau_n^{-j}(S)\right)\right)$ for some simple module $S\in\m\La$.
\end{enumerate}
\end{proposition}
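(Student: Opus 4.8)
The plan is to reduce everything to a combinatorial statement about subsets of $\ZZ/m\ZZ$. Part (a) is immediate: for $Q=A_1$ the algebra $\La=\K$ is semisimple with a single indecomposable module, so $\La\in\cC$ forces $\cC=\add(\La)=\m\La$, and conversely $\m\La$ is $n$-cluster tilting since all higher extension groups vanish. For part (b) I would first note that $\La=\K\tilde{A}_m/\J^2$ is a self-injective Nakayama algebra whose indecomposables, by Theorem \ref{thrm:representation-finite string algebra}, are exactly the $m$ simples $S(i)$ and the $m$ projective-injectives $P(i)=I(i+1)=\begin{smallmatrix}i\\i+1\end{smallmatrix}$, with indices read in $\ZZ/m\ZZ$. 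Since any $n$-cluster tilting $\cC$ contains $\La$ and $D(\La)$, it contains every $P(i)$; thus $\cC=\add\bigl(\La\oplus\bigoplus_{i\in T}S(i)\bigr)$ is determined by a subset $T\subseteq\ZZ/m\ZZ$, and the task is to decide which $T$ occur.

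The engine of the proof is the extension algebra of the simples. From $\Omega(S(i))=S(i+1)$ one obtains the minimal projective resolution $\cdots\to P(a+1)\to P(a)\to S(a)\to 0$; its differentials are radical maps, so applying $\Hom_\La(-,S(b))$ gives a complex with zero differentials, whence $\Ext^i_\La(S(a),S(b))\isom\K$ when $b\equiv a+i\pmod m$ and vanishes otherwise, while all positive extension groups involving a projective-injective vanish. Consequently $n$-rigidity of $\cC$ says that $b-a\not\equiv i\pmod m$ for all $a,b\in T$ and all $0<i<n$, and the two maximality conditions say that $c\in T$ if and only if there is no $b\in T$ (equivalently no $a\in T$) with $b-c\equiv i$ (resp. $c-a\equiv i$) for some $0<i<n$. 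I would also record, from the almost split sequence $0\to S(i+1)\to P(i)\to S(i)\to 0$ and from $\Omega^-(S(i))=S(i-1)$, that $\tau_n^-(S(i))=\tau^-\Omega^{-(n-1)}(S(i))=S(i-n)$.

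For necessity, assume $\cC$ is $n$-cluster tilting. If $T=\varnothing$ then $\cC=\add(\La)$ and, $\La$ being projective, $\Ext^i_\La(\cC,X)=0$ for every $X$, so one maximality set is all of $\m\La\neq\cC$; hence $T\neq\varnothing$. Taking $a=b\in T$ in the rigidity condition shows no multiple of $m$ lies in $\{1,\dots,n-1\}$, i.e. $n\leq m$, so from now on $\{1,\dots,n-1\}$ is a set of distinct nonzero residues mod $m$. By Proposition \ref{prop:n-ct is closed under n-AR translations and uniqueness of n-ct}(a), $\tau_n^-$ is a bijection $\cC_\cI\to\cC_\cP$, and both sets equal $\{S(i):i\in T\}$, so $T-n=T$; thus $T$ is a nonempty union of cosets of $\langle n\rangle\leq\ZZ/m\ZZ$. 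Two distinct elements of one coset differ by $\gcd(m,n)$, so if a coset is nontrivial rigidity forces $\gcd(m,n)\geq n$, hence $n\divides m$; if instead every coset is a singleton then $m\divides n$, which with $n\leq m$ gives $n=m$, again $n\divides m$. Finally, once $n\divides m$, distinct cosets contain representatives differing by some $d\in\{1,\dots,n-1\}$, contradicting rigidity, so $T$ is a single coset—exactly a $\tau_n^-$-orbit—giving the asserted form. For sufficiency, suppose $n\divides m$ and let $T$ be a single coset of $\langle n\rangle$, i.e. the $\tau_n^-$-orbit of a simple $S$: since $n\leq m$ and all differences within $T$ are either $0$ or nonzero multiples of $n$, none lies in $\{1,\dots,n-1\}$, so $\cC$ is $n$-rigid. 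For maximality, any $c\notin T$ sits strictly between two consecutive coset elements that are $n$ apart, so its forward and backward cyclic distances to $T$ both lie in $\{1,\dots,n-1\}$; hence $S(c)$ fails both orthogonality conditions, whereas every $S(c)$ with $c\in T$ and every projective-injective satisfies them. Therefore both defining sets of the $n$-cluster tilting condition equal $\cC$, and since $\La$ is representation-finite $\cC=\add(M)$ is functorially finite, so $\cC$ is $n$-cluster tilting.

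The main obstacle is the necessity bookkeeping: one must translate the two-sided rigidity and maximality conditions faithfully into statements about $T\subseteq\ZZ/m\ZZ$, and be careful that $\{1,\dots,n-1\}$ behaves as a set of distinct residues only once $n\leq m$ has been secured from the self-extensions $\Ext^m_\La(S,S)$. The extension computation and the identity $\tau_n^-(S(i))=S(i-n)$ are routine to establish, but every later step depends on having them exactly right.
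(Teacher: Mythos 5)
Your proof is correct, but it takes a genuinely different route from the paper: for part (b) the paper simply cites \cite[Theorem 5.1]{DI}, which classifies $n$-cluster tilting modules over self-injective Nakayama algebras, whereas you give a self-contained argument from scratch. Your reduction is sound: over $\La=\K\tilde{A}_m/\J^2$ every indecomposable is a simple $S(i)$ or a projective-injective $P(i)=I(i+1)$, the periodic resolution $\cdots\to P(a+1)\to P(a)\to S(a)\to 0$ with radical differentials gives $\Ext^i_\La(S(a),S(b))\isom\K$ exactly when $b\equiv a+i\pmod m$, and $\tau_n^{-}(S(i))\isom S(i-n)$; from there the translation of rigidity and the two maximality conditions into conditions on $T\subseteq\ZZ/m\ZZ$, the use of Proposition \ref{prop:n-ct is closed under n-AR translations and uniqueness of n-ct}(a) to force $T-n=T$, and the coset analysis yielding $\gcd(m,n)=n$ and $T$ a single coset are all correct (including the preliminary steps $T\neq\varnothing$ and $n\leq m$, which you rightly flag as necessary before treating $\{1,\dots,n-1\}$ as distinct nonzero residues). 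What the paper's approach buys is brevity and consistency with the literature it builds on; what yours buys is a completely elementary, verifiable argument that also makes the uniqueness-up-to-choice-of-orbit structure of $\cC$ transparent and mirrors the flow-path computations used elsewhere in the paper for the non-$\tilde{A}_m$ case. The only cosmetic caveat is that your maximality check quantifies over indecomposables only; this is fine by additivity of $\Ext$, but is worth stating explicitly.
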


\begin{proof}
\begin{enumerate}[label=(\alph*)]
    \item In this case $\La=\K$ and the result is clear.
    \item Follows from \cite[Theorem 5.1]{DI}.\qedhere
\end{enumerate}
\end{proof}

By Lemma \ref{lem:the existence of flow paths} we have that the only $n$-pre-admissible quivers that do not have flow paths are the quivers $A_1$ and $\tilde{A}_m$ for $m\geq 1$. Proposition \ref{prop:the cases A_1 and A_m tilde} classifies radical square zero bound quiver algebras with such quivers that admit $n$-cluster tilting subcategories. Hence it remains to study $n$-pre-admissible quivers that have flow paths. For the rest of this section we fix an $n$-pre-admissible quiver $Q$ such that $Q\neq A_1$ and $Q\neq \tilde{A}_m$ for any $m\geq 1$. It then follows that there exists a flow path in $Q$. We further set $\La\coloneqq \K Q/\J^2$. We start with some simple but important observations about flow paths.

\begin{lemma}\label{lem:same vertices in k-flow path}
Let $\mathbf{v}$ be a $k$-flow path in $Q$. Let $1\leq s\leq t\leq k$. 
\begin{enumerate}[label=(\alph*)]
    \item If $1<s$ and $t<k$, then $v_s=v_t$ if and only if $s=t$.
    \item If $s<t$ and $v_s=v_t$, then $s=1$ and $t=k$. In particular, in this case $v_1=v_k$. 
\end{enumerate}
\end{lemma}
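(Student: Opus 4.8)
The plan is to exploit the defining property of a flow path (Definition \ref{def:flow path}): a vertex $v_s$ with $1<s<k$ is \emph{interior} and hence has $\delta(v_s)=(1,1)$, so it admits exactly one incoming and exactly one outgoing arrow; since moreover $Q$ has no multiple arrows (Definition \ref{def:n-pre-admissible}(ii)), an arrow is determined by its endpoints. The key mechanism is that equalities of interior vertices \emph{propagate} along the path: if $v_a=v_b$ is interior, then its unique incoming arrow forces $v_{a-1}=v_{b-1}$ and its unique outgoing arrow forces $v_{a+1}=v_{b+1}$, whenever the neighbouring positions exist. I will use backward propagation to prove (a) and then derive (b) from (a) together with the fact that equal vertices have equal degrees.

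For part (a), the implication $s=t\Rightarrow v_s=v_t$ is trivial, so I assume $1<s\le t<k$ with $v_s=v_t$ and suppose for contradiction that $s<t$. From $s\le t<k$ we get $1<s<k$, so $v_s$ is interior. I then propagate backwards by induction on $j$, showing $v_{s-j}=v_{t-j}$ for $0\le j\le s-1$: the inductive step is justified because for $j\le s-2$ the vertex $v_{s-j}$ satisfies $1<s-j<k$ and is therefore interior, so its unique incoming arrow forces $v_{s-j-1}=v_{t-j-1}$ (both predecessor indices being at least $1$). Taking $j=s-1$ yields $v_1=v_{t-s+1}$. Here $t-s+1$ satisfies $1<t-s+1<k$, using $t>s$ and $t-s+1\le t<k$, so $v_{t-s+1}$ is interior with $\delta(v_{t-s+1})=(1,1)$; but $v_1$ is an endpoint, so $\delta(v_1)\neq(1,1)$. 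This contradiction forces $s=t$.

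For part (b) I assume $s<t$ and $v_s=v_t$ and locate the two endpoints by a short case analysis. First I show $s=1$: if instead $1<s$, then $s<t\le k$ gives $s<k$, so $v_s$ is interior with $\delta(v_s)=(1,1)$; if also $t<k$ then part (a) applies and gives $s=t$, contradicting $s<t$, while if $t=k$ then $v_t=v_k$ has $\delta(v_k)\neq(1,1)$, contradicting $\delta(v_s)=\delta(v_t)=(1,1)$. Hence $s=1$. A symmetric comparison of degrees then gives $t=k$: if $t<k$, then $t>s=1$ yields $1<t<k$, so $v_t$ is interior with $\delta(v_t)=(1,1)$, yet $v_t=v_1$ and $\delta(v_1)\neq(1,1)$, a contradiction. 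Thus $s=1$ and $t=k$, and $v_1=v_k$ is immediate.

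The crux of the argument is the backward-propagation induction in part (a); the only real difficulty there is the bookkeeping of index ranges, needed to guarantee that every vertex whose incoming arrow is invoked is genuinely interior and that the terminal equality pits an endpoint against an interior vertex. Once this is established, part (b) reduces to the elementary observation that equal vertices have equal degrees, so it presents no further obstacle.
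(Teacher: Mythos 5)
Your proof is correct and follows essentially the same strategy as the paper: backward propagation of vertex equalities along the path using the uniqueness of the incoming arrow at interior (degree $(1,1)$) vertices, terminating in a degree comparison between $v_1$ and an interior vertex; the paper merely packages the propagation as a minimality argument rather than an explicit induction. Part (b) is likewise the same degree-comparison reduction to part (a).
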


\begin{proof}
\begin{enumerate}[label=(\alph*)]
    \item If $s=t$, then clearly $v_s=v_t$. Assume towards a contradiction that $v_s=v_t$ but $s<t$. Without loss of generality, we may assume that $s<t$ are minimal among $\{2,\ldots,k-1\}$ with these properties. By the definition of a $k$-flow path and since $\delta(v_s)=\delta(v_t)=(1,1)$, it follows that $v_{s-1}=v_{t-1}$. By minimality of $s$ and $t$ we conclude that $s-1=1$. Moreover, we have $1< s\leq t-1< t< k$ and so $\delta(v_{t-1})=(1,1)$. Then
    \[
    (1,1)\neq \delta(v_{1})=\delta(v_{s-1})=\delta(v_{t-1})=(1,1),
    \]
    which is a contradiction.
    
    \item Since $s<t$ and $v_s=v_t$, it follows from (a) that $s=1$ or $t=k$. In both cases we get that $\delta(v_s)=\delta(v_t)\neq (1,1)$. It follows from the definition of a $k$-flow path that $s=1$ and $t=k$. \qedhere
\end{enumerate}
\end{proof}

\begin{lemma}\label{lem:same flow paths}
Let $\mathbf{v}$ be a $k$-flow path in $Q$ and let $\mathbf{u}$ be a $k'$-flow path in $Q$. 
\begin{enumerate}[label=(\alph*)]
    \item Let $v_s$ be a vertex in $\mathbf{v}$ with $\delta(v_s)=(1,1)$ and assume that $v_s=u_t$ for some vertex $u_t$ in $\mathbf{u}$. Then $\mathbf{v}=\mathbf{u}$.
    
    \item Assume that $v_k=u_{k'}$ and that $v_{k-1}=u_{k'-1}$. Then $\mathbf{v}=\mathbf{u}$.
    
    \item Assume that $v_k=u_{k'}$ and that $\delta^{-}(v_k)=1$. Then $\mathbf{v}=\mathbf{u}$.
\end{enumerate}

\end{lemma}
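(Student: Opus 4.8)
The plan is to exploit the rigidity of flow paths at vertices of degree $(1,1)$: such a vertex $w$ has $\delta^{-}(w)=\delta^{+}(w)=1$, so there is exactly one arrow entering $w$ and exactly one arrow leaving $w$, and any flow path passing through $w$ as an interior vertex must enter and leave $w$ along these two forced arrows. Hence the predecessor and successor of $w$ in such a path are determined by $w$ alone. This is the engine behind all three parts: (a) I prove directly by propagating this forcing in both directions, (b) I reduce to (a) (handling the degenerate case $k=2$ by a degree count), and (c) I reduce to (b).

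For part (a) I start from the common vertex $v_s=u_t$ with $\delta(v_s)=(1,1)$; by the definition of a flow path this forces $1<s<k$, and since $\delta(u_t)=\delta(v_s)=(1,1)$ also $1<t<k'$. I then propagate backward one step at a time. The crucial point is that, once $v_{s-i}=u_{t-i}$ is established, whether this common vertex is interior is intrinsic to the vertex: being interior in $\mathbf v$ is equivalent to having degree $(1,1)$, which is equivalent to being interior in $\mathbf u$. Since $s-i\leq s<k$ and $t-i\leq t<k'$, neither index can reach the right endpoint, so interiority amounts to $s-i>1$ on the one side and to $t-i>1$ on the other, and these must agree. The backward propagation proceeds exactly while the common vertex has degree $(1,1)$, hence a unique incoming arrow, so the predecessors are forced to match. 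Assuming without loss of generality $s\leq t$, after $s-1$ steps I reach $v_1=u_{t-s+1}$; as $t-s+1\leq t<k'$ this position cannot be $k'$, and since $v_1$ has degree $\neq(1,1)$ it must be an endpoint of $\mathbf u$, so $t-s+1=1$, giving $s=t$ and $v_1=u_1$. A symmetric forward propagation forces $k=k'$, and together these give $v_j=u_j$ for all $j$, i.e. $\mathbf v=\mathbf u$.

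Part (b) then follows quickly. If $k\geq 3$ the penultimate vertex $v_{k-1}$ is interior, so $\delta(v_{k-1})=(1,1)$, and applying part (a) to the common $(1,1)$-vertex $v_{k-1}=u_{k'-1}$ yields $\mathbf v=\mathbf u$. If instead $k=2$, then $v_{k-1}=v_1$ has degree $\neq(1,1)$; since $v_1=u_{k'-1}$ with $k'-1<k'$, the vertex $u_{k'-1}$ cannot be interior in $\mathbf u$, forcing $k'-1=1$, whence $k'=2$ and $\mathbf v=\mathbf u$ directly. For part (c), the hypothesis $\delta^{-}(v_k)=1$ says $v_k=u_{k'}$ has a unique incoming arrow; the last arrows $v_{k-1}\to v_k$ and $u_{k'-1}\to u_{k'}$ both end at this vertex, so they coincide, giving $v_{k-1}=u_{k'-1}$, and part (b) applies.

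I expect the main obstacle to be the bookkeeping in part (a): one must ensure that the backward and forward propagations run off the two flow paths simultaneously, rather than one path terminating before the other. This is exactly what the intrinsic characterization of interiority delivers, since the interior status of the current common vertex in $\mathbf v$ and in $\mathbf u$ must agree at every step, and going backward from interior positions the index can only hit the left endpoint. Everything else is a routine consequence of the uniqueness of the forced arrows at $(1,1)$ vertices together with the absence of multiple arrows guaranteed by $n$-pre-admissibility.
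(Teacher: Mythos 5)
Your proposal is correct and takes essentially the same approach as the paper: all three parts rest on the forced matching of predecessors/successors at degree-$(1,1)$ vertices, with (b) reducing to (a) after the degenerate $k=2$ case and (c) reducing to (b). Your explicit use of the intrinsic characterization of interiority ($\delta(v_s)=(1,1)$ iff $1<s<k$) to synchronize the two propagations is just a slightly more streamlined packaging of the paper's argument, which instead stops at $v_2$ and invokes the injectivity of interior vertices.
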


\begin{proof}
\begin{enumerate}[label=(\alph*)]
    \item Since $\delta(v_s)=(1,1)$, it follows from the definition of a flow path that $1<s<k$. Since $v_s=u_t$ it follows that $\delta(u_t)=(1,1)$. Without loss of generality we may assume that $s\leq t$. By the definition of a $k$-flow path it follows that $v_{s-1}=u_{t-1}$. Continuing inductively we see that $v_{2}=u_{t-(s-2)}$ and this vertex has degree $(1,1)$. Hence the only arrow ending at $v_{2}=u_{t-(s-2)}$ is the arrow coming from $v_1$. Since $\delta(v_1)\neq (1,1)$ and $\delta(u_{t-(s-2)})=(1,1)$, and since there exists an arrow $v_1\longrightarrow u_{t-(s-2)}$, it follows that $u_1=v_1$ and $u_2=u_{t-(s-2)}$. By Lemma \ref{lem:same vertices in k-flow path}(a) it follows that $2=t-(s-2)$ and so $s=t$. A dual argument shows that $v_k=u_{k'}$. Since $v_1=u_1$, $v_k=u_k$ and $v_s=u_s$ for some $s$ with $1<s<k$, it readily follows that $\mathbf{v}=\mathbf{u}$.
    
    \item Since $\mathbf{v}$ and $\mathbf{u}$ are flow paths, there exist arrows $\alpha: v_{k-1}\longrightarrow v_k$ and $\beta:u_{k'-1}\longrightarrow u_{k'}$. Since $v_{k-1}=u_{k'-1}$ and $v_k=u_{k'}$, and since there exist no multiple arrows between two vertices of $Q$, it follows that $\alpha=\beta$. If $\delta(v_{k-1})\neq (1,1)$, then $\mathbf{v}=v_{k-1}\longrightarrow v_k = u_{k'-1} \longrightarrow u_{k'}=\mathbf{u}$, as required. Otherwise, if $\delta(v_{k-1})=(1,1)$, then the result follows from (a).
    
    \item Since $\delta^{-}(v_k)=\delta^{-}(u_{k'})=1$, there exists a unique arrow ending at $v_k=u_{k'}$. Since there exist arrows $v_{k-1}\longrightarrow v_k$ and $u_{k'-1}\longrightarrow u_{k'}$, we conclude that $v_{k-1}=u_{k'-1}$. The result follows from (b). \qedhere
\end{enumerate}
\end{proof}

\begin{corollary}\label{cor:unique flow path}
Let $\alpha:w\longrightarrow v$ be an arrow in $Q$.
\begin{enumerate}[label=(\alph*)]
    \item If $\delta(v)=(1,1)$, then there exists a unique flow path $\mathbf{v}$ in $Q$ through $v$. 
    \item If $\delta(v)\neq (1,1)$, then there exists a unique flow path $\mathbf{v}$ in $Q$ ending at $v$ such that $\alpha$ is the last arrow of $\mathbf{v}$.
\end{enumerate}
In both cases we have that $v=v_j$ for some $j>1$.
\end{corollary}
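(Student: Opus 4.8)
The plan is to treat each of the two parts as an existence claim together with a uniqueness claim, and to observe that the uniqueness claims are essentially repackagings of Lemma~\ref{lem:same flow paths}. Thus the real work lies in the construction of a flow path, and in particular in arguing that this construction terminates; everything else should follow quickly from results already established in this subsection.

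For existence I would build the flow path by following forced arrows at $v$. In case~(a), where $\delta(v)=(1,1)$, the vertex $v$ has exactly one incoming and one outgoing arrow. Starting from $v$ I extend backwards by repeatedly following the unique incoming arrow, and forwards by repeatedly following the unique outgoing arrow, stopping as soon as I reach a vertex whose degree is not $(1,1)$. In case~(b), where $\delta(v)\neq(1,1)$, I instead start from the given arrow $\alpha\colon w\to v$, declare $v$ to be the terminal vertex, and extend only backwards from $w$ in the same fashion. In either case the intermediate vertices all have degree $(1,1)$ by construction while the endpoints have degree $\neq(1,1)$, so the resulting path is a flow path in the sense of Definition~\ref{def:flow path}, passing through $v$ (and ending at $v$ with $\alpha$ as its last arrow in case~(b)).

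The main obstacle is showing that this extension process terminates, that is, that one actually reaches vertices of degree $\neq(1,1)$ rather than looping forever. Here I would use finiteness of $Q$ together with the standing assumption $Q\neq\tilde{A}_m$: if an extension never met a vertex of degree $\neq(1,1)$, then by finiteness the chain of $(1,1)$-vertices would have to repeat, producing a directed cycle all of whose vertices have degree $(1,1)$. Since a vertex of degree $(1,1)$ admits no further incident arrows, such a cycle would be a connected component of $Q$, and as $Q$ is connected it would equal $Q$; but a cycle of $(1,1)$-vertices is exactly $\tilde{A}_m$, contradicting our assumption. In case~(b) one can alternatively note that the backward chain can never return to $v$, since $\delta(v)\neq(1,1)$. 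This termination argument is the only genuinely delicate point.

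Finally, for uniqueness I would invoke Lemma~\ref{lem:same flow paths}. In case~(a), any two flow paths through $v$ contain $v$ as a common vertex of degree $(1,1)$, so Lemma~\ref{lem:same flow paths}(a) forces them to coincide. In case~(b), any two flow paths ending at $v$ whose last arrow is $\alpha\colon w\to v$ satisfy $v_k=v=u_{k'}$ and $v_{k-1}=w=u_{k'-1}$, so they coincide by Lemma~\ref{lem:same flow paths}(b). The closing assertion that $v=v_j$ for some $j>1$ is then immediate: in case~(a) the equality $\delta(v)=(1,1)$ places $v$ strictly between the endpoints, so $1<j<k$, while in case~(b) we have $v=v_k$ with $k\geq 2$ by Definition~\ref{def:flow path}.
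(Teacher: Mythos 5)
Your proof is correct and follows essentially the same route as the paper: the paper dismisses existence as clear (it is the same forced-extension/termination argument already used for Lemma \ref{lem:the existence of flow paths}, which you spell out correctly using finiteness, connectedness and $Q\neq\tilde{A}_m$), and derives uniqueness from Lemma \ref{lem:same flow paths} exactly as you do, via part (a) in case (a) and part (b) in case (b).
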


\begin{proof}
The existence of the flow path is clear since $Q\neq A_1$ and $Q\neq \tilde{A}_m$. The uniqueness follows from Lemma \ref{lem:same flow paths}.
\end{proof}

If $\mathbf{v}$ is a $k$-flow path in $Q$, then the arrows ending and starting at the vertices $v_1$ and $v_k$ play an important role in our investigation. Hence we also label the following vertices
\begin{equation}\label{eq:extended flow path}
\begin{tikzcd} 
    v^{-2} \arrow[rd, dotted] & {} & v^{-1} & {} & v^{+1} \arrow[rd, dotted] & {} & v^{+2} \\
    {} & v_1 \arrow[r] \arrow[ru, dotted] & v_2 \arrow[r] & \cdots \arrow[r] & v_{k-1} \arrow[r] & v_k \arrow[ru, dotted] \arrow[rd, dotted] & {} \\
    v^{-3} \arrow[ru, dotted] & {} & {} & {} & {} & {} & v^{+3},
\end{tikzcd}
\end{equation}
where a dotted arrow means that such an arrow may or may not exist. When $\delta^{-}(v_1)=1$, we assume that the arrow $v^{-2}\longrightarrow v_1$ is the one that exists and when $\delta^{+}(v_k)=1$ we assume that the arrow $v_k\longrightarrow v^{+2}$ is the one that exists. Notice that by Definition \ref{def:n-pre-admissible}(ii) we have that $v^{-1}\neq v_2$ and $v^{+1}\neq v_{k-1}$, if the vertices $v^{-1}$ and $v^{+1}$ exist.

We also set
\[
I(\mathbf{v}) \coloneqq \begin{cases}
    I(v_1), &\mbox{if $\delta^{+}(v_1)=1$,} \\ 
    I(v^{-1}), &\mbox{if $\delta^{+}(v_1)=2$,}
\end{cases}
\text{\;\; and  \;\;}
P(\mathbf{v}) \coloneqq \begin{cases}
    P(v_k), &\mbox{if $\delta^{-}(v_k)=1$,} \\ 
    P(v^{+1}), &\mbox{if $\delta^{-}(v_k)=2$.}
\end{cases}
\]
With this notation, we have the following technical results.

\begin{lemma}\label{lem:same P(v) implies same path}
Let $\mathbf{v}$ be a $k$-flow path in $Q$ and let $\mathbf{u}$ be a $k'$-flow path in $Q$. Then $P(\mathbf{v})$ is not injective and $P(\mathbf{v})\isom P(\mathbf{u})$ if and only if $\mathbf{v}=\mathbf{u}$.
\end{lemma}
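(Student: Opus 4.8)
The plan is to treat the two assertions separately, using throughout that over a radical square zero algebra the indecomposable projective $P(w)$ has top $S(w)$ and socle (equal to its radical) $\bigoplus_{\alpha\colon w\to x}S(x)$, while every indecomposable injective has simple socle. Consequently $P(w)$ is injective precisely when $\delta^{+}(w)=1$ and the unique arrow out of $w$ lands at a vertex of ingoing degree $1$ (so that $P(w)\isom\begin{smallmatrix} w\\ x\end{smallmatrix}\isom I(x)$), or when $\delta^{+}(w)=0=\delta^{-}(w)$. In particular, if $\delta^{+}(w)\geq 2$ then $\soc P(w)$ is non-simple (here using that there are no multiple arrows) and $P(w)$ is not injective.

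First I would prove non-injectivity by unwinding the definition of $P(\mathbf{v})$ according to whether $\delta^{-}(v_k)=1$ or $\delta^{-}(v_k)=2$. If $\delta^{-}(v_k)=1$ then $P(\mathbf{v})=P(v_k)$; since $v_k$ is an endpoint of a flow path we have $\delta(v_k)\neq(1,1)$, forcing $\delta^{+}(v_k)\in\{0,2\}$, and in either case the criterion above fails ($\soc P(v_k)$ is non-simple when $\delta^{+}(v_k)=2$, and $P(v_k)=S(v_k)$ is non-injective when $\delta^{+}(v_k)=0$ because $\delta^{-}(v_k)=1\neq 0$). If $\delta^{-}(v_k)=2$ then $P(\mathbf{v})=P(v^{+1})$, where $v^{+1}\to v_k$ is an arrow; Corollary \ref{cor:arrow bounds sum of degrees} gives $\delta^{+}(v^{+1})+\delta^{-}(v_k)\leq 3$, hence $\delta^{+}(v^{+1})=1$, so $P(v^{+1})\isom\begin{smallmatrix} v^{+1}\\ v_k\end{smallmatrix}$, which would be injective only if $\delta^{-}(v_k)=1$, contradicting $\delta^{-}(v_k)=2$.

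For the equivalence, the implication $\mathbf{v}=\mathbf{u}\Rightarrow P(\mathbf{v})\isom P(\mathbf{u})$ is immediate. Conversely, since $P(\mathbf{v})$ and $P(\mathbf{u})$ are indecomposable projectives and $P(a)\isom P(b)$ iff $a=b$, an isomorphism $P(\mathbf{v})\isom P(\mathbf{u})$ identifies a single vertex $w$ with $P(\mathbf{v})=P(w)=P(\mathbf{u})$. The key observation is that which defining case occurs is detected by $\delta^{+}(w)$ alone: in the case $\delta^{-}(v_k)=1$ the vertex $w=v_k$ has $\delta^{+}(w)\in\{0,2\}$, whereas in the case $\delta^{-}(v_k)=2$ the vertex $w=v^{+1}$ has $\delta^{+}(w)=1$. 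As $\delta^{+}(w)$ depends only on $w$, the two flow paths fall into the same case. If both endpoints have ingoing degree $1$, then $w=v_k=u_{k'}$ with $\delta^{-}(v_k)=1$, and Lemma \ref{lem:same flow paths}(c) yields $\mathbf{v}=\mathbf{u}$. If both endpoints have ingoing degree $2$, then $w=v^{+1}=u^{+1}$ has a unique outgoing arrow whose target is simultaneously $v_k$ and $u_{k'}$, giving $v_k=u_{k'}$; the two arrows into this common vertex have sources $\{w,v_{k-1}\}$ and $\{w,u_{k'-1}\}$, and since $v_{k-1}\neq w\neq u_{k'-1}$ (no multiple arrows) we conclude $v_{k-1}=u_{k'-1}$, whence $\mathbf{v}=\mathbf{u}$ by Lemma \ref{lem:same flow paths}(b).

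The routine parts are the structure computations for projectives and injectives over $\La=\K Q/\J^2$. The crux — and the step I expect to require the most care — is the case $\delta^{-}(v_k)=2$, where $w=v^{+1}$ does not lie on either flow path: one must first recover the true endpoints $v_k=u_{k'}$ and then the penultimate vertices $v_{k-1}=u_{k'-1}$ purely from the local configuration of arrows around $w$, so that the uniqueness statements of Lemma \ref{lem:same flow paths} become applicable.
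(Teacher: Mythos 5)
Your proof is correct and follows essentially the same route as the paper: non-injectivity by the case split on $\delta^{-}(v_k)$, then showing both flow paths fall into the same defining case, then reducing to Lemma \ref{lem:same flow paths}(b) or (c). The only (harmless) variation is that you detect the common case via $\delta^{+}(w)$ of the shared vertex $w$, whereas the paper assumes the cases differ and derives the contradiction $\delta(v_k)=(1,1)$; both arguments are equally valid and of the same length.
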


\begin{proof}
That $P(\mathbf{v})$ is not injective follows immediately by the definition of $P(\mathbf{v})$ and since $\delta(v_k)\in\{(1,0),(1,2),(2,1),(2,2)\}$. That $\mathbf{v}=\mathbf{u}$ implies $P(\mathbf{v})\isom P(\mathbf{u})$ is clear. Now assume that $P(\mathbf{v})\isom P(\mathbf{u})$ and we show that $\mathbf{v}=\mathbf{u}$. We first claim that $\delta^{-}(v_k)=\delta^{-}(u_{k'})$. Indeed, assume towards a contradiction that $\delta^{-}(v_k)\neq \delta^{-}(u_{k'})$. Without loss of generality we may assume that $\delta^{-}(v_k)=1$ and $\delta^{-}(u_{k'})=2$. Then $P(\mathbf{v})=P(v_k)$ and $P(\mathbf{u})=P(u^{+1})$. Hence $v_k=u^{+1}$. By Definition \ref{def:n-pre-admissible}(iii), it follows that $\delta^{+}(u^{+1})=1$. Therefore $\delta(v_k)=(\delta^{-}(v_k),\delta^{+}(v_k))=(1,\delta^{+}(u^{+1}))=(1,1)$, which contradicts the definition of a $k$-flow path.

Hence we have shown that $\delta^{-}(v_k)=\delta^{-}(u_{k'})$. Next we consider the cases $\delta^{-}(v_k)=1$ and $\delta^{-}(v_k)=2$ separately.
 
Case $\delta^{-}(v_k)=1$. In this case $\delta^{-}(u_{k'})=1$ and so $P(v_k)\isom P(u_{k'})$. It follows that $v_k=u_{k'}$. Therefore we have that $\mathbf{v}=\mathbf{u}$ by Lemma \ref{lem:same flow paths}(c).

Case $\delta^{-}(v_k)=2$. In this case $\delta^{-}(u_{k'})=2$ and so $P(v^{+1})\isom P(u^{+1})$. It follows that $v^{+1}=u^{+1}$. Since $\delta^{+}(v^{+1})=1$ and there exist an arrow $v^{+1}\longrightarrow v_k$ and an arrow $v^{+1}=u^{+1}\longrightarrow u_{k'}$, it follows that $v_k=u_{k'}$. Since $v^{+1}=u^{+1}\neq u_{k'-1}$, it follows that $u_{k'-1}=v_{k-1}$. Therefore we have that $\mathbf{v}=\mathbf{u}$ by Lemma \ref{lem:same flow paths}(b).
\end{proof}

\begin{lemma}\label{lem:indecomposable projectives noninjectives come from flow paths, are injective or have degree (2,2)}
Let $v\in Q_0$ be a vertex. Then exactly one of the following three conditions hold:
\begin{enumerate}[label=(\roman*)]
    \item $P(v)$ is injective.
    \item $\delta(v)=(2,2)$.
    \item $P(v)=P(\mathbf{v})$ for some flow path $\mathbf{v}$.
\end{enumerate}
\end{lemma}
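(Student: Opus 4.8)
The plan is to prove that the three conditions are mutually exclusive and jointly exhaustive by analyzing the module $P(v)$ through the degree $\delta(v) = (\delta^-(v), \delta^+(v))$ of the vertex. Since $Q$ is $n$-pre-admissible and $Q \neq A_1, \tilde{A}_m$, every vertex has degree in $\{(0,0),(0,1),(1,0),(1,1),(1,2),(2,1),(2,2)\}$, but $(0,0)$ is excluded as $Q$ is connected with more than one vertex. So I would split the argument according to $\delta^+(v)$, since the structure of the projective $P(v)$ over a radical square zero algebra is governed entirely by the outgoing arrows: $P(v)$ has top $S(v)$ and radical $\bigoplus_{\alpha: v \to u} S(u)$, so $\dim P(v) = 1 + \delta^+(v)$.

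First I would establish \textbf{exhaustiveness}. The key observation is that conditions (ii) and (iii) should together cover all vertices where $P(v)$ is \emph{not} injective, so the real work is to show that if $P(v)$ is not injective and $\delta(v) \neq (2,2)$, then $P(v) = P(\mathbf{v})$ for some flow path $\mathbf{v}$. To do this I would use Corollary \ref{cor:unique flow path}: given the vertex $v$, I want to produce a flow path $\mathbf{v}$ whose associated projective $P(\mathbf{v})$ equals $P(v)$. Recall $P(\mathbf{v}) = P(v_k)$ when $\delta^-(v_k)=1$ and $P(\mathbf{v}) = P(v^{+1})$ when $\delta^-(v_k)=2$. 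The natural case split is on whether $\delta(v) = (1,1)$ or not. If $\delta(v) \neq (1,1)$, then I expect $v$ itself can serve as the endpoint $v_k$ of a flow path (using the dual of Corollary \ref{cor:unique flow path}(b) to build a path ending at $v$, or rather a flow path \emph{starting} appropriately so that $P(v) = P(\mathbf{v})$); here the degree constraint from $n$-pre-admissibility, specifically $\delta^+(v) \leq 2$ together with Definition \ref{def:n-pre-admissible}(iii), forces the radical structure to match. If $\delta(v) = (1,1)$, then Corollary \ref{cor:unique flow path}(a) gives a flow path through $v$, and I would need to check that $P(v)$ coincides with $P(\mathbf{v})$ for the appropriate endpoint.

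Next I would establish \textbf{mutual exclusivity}, i.e.\ no two conditions hold simultaneously. That (ii) excludes (i) should follow from a dimension or socle computation: when $\delta(v)=(2,2)$, the projective $P(v)$ is the string module of length $1$ with radical $S(u_1)\oplus S(u_2)$, which is not injective because an injective over a radical square zero algebra has simple socle structure incompatible with this (one can compare with the computation in Lemma \ref{lem:conditions for two arrows}(c), where $P(v)$ with $\delta(v)=(2,2)$ yields a nonzero $\Ext^2$, forcing it to be genuinely non-injective). That (iii) excludes (i) is already handled: Lemma \ref{lem:same P(v) implies same path} explicitly states that $P(\mathbf{v})$ is never injective, so if $P(v) = P(\mathbf{v})$ then (i) fails. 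That (ii) and (iii) are exclusive follows because the definition of $P(\mathbf{v})$ only selects endpoints $v_k$ with $\delta(v_k) \in \{(1,0),(1,2),(2,1),(2,2)\}$, but the vertex realizing $P(\mathbf{v})$ is either $v_k$ (with $\delta^-(v_k)=1$, hence $\neq(2,2)$) or $v^{+1}$ (which has $\delta^+(v^{+1})=1$ by Definition \ref{def:n-pre-admissible}(iii), hence $\neq (2,2)$); either way the vertex carrying the projective does not have degree $(2,2)$.

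\textbf{Main obstacle.} I expect the delicate point to be the exhaustiveness step in the case $\delta(v)=(1,1)$, where one must carefully match $P(v)$ against the correct $P(\mathbf{v})$ by following the unique flow path through $v$ to its endpoint and verifying, using the rigidity of flow path endpoints together with the radical-square-zero structure, that the two projectives genuinely coincide rather than merely being associated to the same path. The bookkeeping of which vertex ($v_k$ versus $v^{+1}$) carries the projective, and ensuring the degree hypotheses align with Definition \ref{def:n-pre-admissible}(iii), is where the argument is most prone to slip.
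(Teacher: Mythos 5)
Your treatment of mutual exclusivity is correct and essentially the paper's: (i) and (iii) are incompatible because $P(\mathbf{v})$ is never injective (Lemma \ref{lem:same P(v) implies same path}); (ii) and (iii) because the vertex carrying $P(\mathbf{v})$ is either $v_k$ with $\delta^{-}(v_k)=1$ or $v^{+1}$ with $\delta^{+}(v^{+1})=1$, so never of degree $(2,2)$; and (i) and (ii) because $P(v)$ has non-simple socle when $\delta^{+}(v)=2$.

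The exhaustiveness step, however, has a genuine gap: your case split does not produce the right flow path. First, the claim that for $\delta(v)\neq(1,1)$ the vertex $v$ itself serves as the endpoint $v_k$ with $P(v)=P(\mathbf{v})$ fails for $\delta(v)=(0,1)$ (no flow path ends at $v$, since $\delta^{-}(v)=0$) and for $\delta(v)=(2,1)$ (a flow path $\mathbf{v}$ does end at $v$, but then $\delta^{-}(v_k)=2$ and $P(\mathbf{v})=P(v^{+1})$, which is not $P(v)$). Second, for $\delta(v)=(1,1)$ with $P(v)$ not injective, ``following the unique flow path $\mathbf{v}$ through $v$ to its endpoint'' also gives the wrong object: in that situation $v=v_{k-1}$ and $\delta^{-}(v_k)=2$, so $P(\mathbf{v})=P(v^{+1})$ where $v^{+1}$ is the source of the \emph{other} arrow into $v_k$. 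The flow path one actually needs is the one whose last arrow is $v^{+1}\longrightarrow v_k$; relative to that path, $v$ itself is the auxiliary ``$+1$'' vertex, and its associated projective is $P(v)$. The decomposition that works (and is the paper's) is governed by $\delta^{+}(v)$ together with the in-degree of the target of the outgoing arrow: if $\delta^{+}(v)=0$ or $\delta(v)=(1,2)$, then $v$ is an endpoint with $\delta^{-}(v_k)=1$ and (iii) holds via $P(\mathbf{v})=P(v_k)$; if $\delta(v)=(2,2)$, then (ii) holds; and if $\delta^{+}(v)=1$ with unique arrow $v\longrightarrow u$, then $\delta^{-}(u)=1$ forces $P(v)\isom I(u)$, giving (i), while $\delta^{-}(u)=2$ lets one apply Corollary \ref{cor:unique flow path}(b) to the other arrow into $u$ to land in (iii). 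Your plan never inspects $\delta^{-}(u)$, which is precisely the datum that decides between (i) and (iii) for every vertex with $\delta^{+}(v)=1$.
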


\begin{proof}
Notice that conditions (i) and (iii) cannot hold simultaneously since by Lemma \ref{lem:same P(v) implies same path} we have that $P(\mathbf{v})$ is not injective. Moreover, by the definition of $P(\mathbf{v})$, conditions (ii) and (iii) also cannot hold simultaneously. It is also clear that conditions (i) and (ii) cannot hold simultaneously, since if $\delta(v)=(2,2)$, then $P(v)$ does not have simple socle. Hence it is enough to show that one of the conditions (i),(ii) or (iii) holds. We consider the cases $\delta^{+}(v)=0$, $\delta^{+}(v)=1$ and $\delta^{+}(v)=2$ separately.

Case $\delta^{+}(v)=0$. In this case $\delta(v)=(1,0)$ and by Corollary \ref{cor:unique flow path}(b) there exists a unique flow path $\mathbf{v}$ ending at $v$. It follows from the definition of $P(\mathbf{v})$ that $P(\mathbf{v})=P(v)$ and so condition (iii) holds.

Case $\delta^{+}(v)=1$. Let $\alpha:v\longrightarrow u$ be the unique arrow starting at $v$. We consider the subcases $\delta^{-}(u)=1$ and $\delta^{-}(u)=2$ separately.
\begin{itemize}
    \item Subcase $\delta^{-}(u)=1$. In this case $P(v)=I(u)$ is injective and so condition (i) holds.
    
    \item Subcase $\delta^{-}(u)=2$. Let $\beta:w\longrightarrow u$ be the other arrow ending at $u$. By Corollary \ref{cor:unique flow path}(b) and since $\delta(u)\neq(1,1)$, it follows that there exists a unique flow path $\mathbf{v}$ such that the last arrow of $\mathbf{v}$ is $\beta$. It follows from the definition of $P(\mathbf{v})$ that $P(\mathbf{v})=P(v)$ and so condition (iii) holds.
\end{itemize}

Case $\delta^{+}(v)=2$. We consider the subcases $\delta(v)=(1,2)$ and $\delta(v)=(2,2)$ separately.
\begin{itemize}
    \item Subcase $\delta(v)=(1,2)$. By Corollary \ref{cor:unique flow path}(b) there exists a unique flow path $\mathbf{v}$ ending at $v$. It follows from the definition of $P(\mathbf{v})$ that $P(\mathbf{v})=P(v)$ and so condition (iii) holds.
    
    \item Subcase $\delta(v)=(2,2)$. In this case condition (ii) holds. \qedhere
\end{itemize}
\end{proof}

Let $v\in Q_0$ be a vertex. If there exists an $n$-cluster titling subcategory $\cC\subseteq\m\La$, then we have that $P(v)\in\cC$. By Proposition \ref{prop:n-ct is closed under n-AR translations and uniqueness of n-ct}(a) we then have that $\tau_n^{-j}(P(v))\in\cC$ for all $j\geq 0$. By Lemma \ref{lem:indecomposable projectives noninjectives come from flow paths, are injective or have degree (2,2)} there are three different cases for $P(v)$. If $P(v)$ belongs to the first case, that is if $P(v)$ is injective, then $\tau_n^{-j}(P(v))=0$ for $j\geq 1$. Our aim now is to compute $\tau_n^{-j}(P(v))$ for the two remaining cases. To this end we need the following lemma.

\begin{lemma}\label{lem:cosyzygy and tau-inverse of simples}
Let $v\in Q_0$ be a vertex. 
\begin{enumerate}[label=(\alph*)]
    \item If $\delta^{-}(v)=0$, then $\Omega^{-}(S(v))=\tau^{-}(S(v))=0$.
    \item If $\delta^{-}(v)=1$, let $w\longrightarrow v$ be the unique arrow ending at $v$. Then $\Omega^{-}(S(v))\isom S(w)$ and $\tau^{-}(S(v))\isom \coker(S(v)\hookrightarrow P(w))$.
    \item If $\delta^{-}(v)=2$, let $w_1\longrightarrow v$ and $w_2\longrightarrow v$ be the two arrows ending at $v$. Then $\Omega^{-}(S(v))\isom S(w_1)\oplus S(w_2)$ and $\tau^{-}(S(v))\isom I(v)$.
\end{enumerate}
\end{lemma}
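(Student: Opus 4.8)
The plan is to compute $\Omega^{-}(S(v))$ and $\tau^{-}(S(v))$ directly from the structure of the radical square zero algebra $\La=\K Q/\J^2$, splitting into the three cases according to $\delta^{-}(v)$. The cosyzygy computation rests on understanding the injective hull $I(v)$: since $\La$ has radical square zero, the injective $I(v)$ has socle $S(v)$ and top $\bigoplus_{w\to v} S(w)$, so $\dim(I(v))=\delta^{-}(v)+1$, and the cokernel of the inclusion $S(v)\hookrightarrow I(v)$ is exactly $\bigoplus_{w\to v}S(w)$. This immediately yields $\Omega^{-}(S(v))=0$ when $\delta^{-}(v)=0$ (as then $I(v)=S(v)$), $\Omega^{-}(S(v))\isom S(w)$ when $\delta^{-}(v)=1$, and $\Omega^{-}(S(v))\isom S(w_1)\oplus S(w_2)$ when $\delta^{-}(v)=2$.

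First I would dispatch case (a): if $\delta^{-}(v)=0$ then $S(v)$ is injective, so $\Omega^{-}(S(v))=0$; consequently $\tau^{-}(S(v))=\tau^{-}\Omega^{-}(\Omega^{-})$ reasoning is not even needed, since $\tau^{-}$ of an injective module vanishes. For the $\tau^{-}$ computations in (b) and (c) I would use the Auslander--Reiten duality recalled in the preliminaries, namely $\Ext^1_{\La}(M,N)\isom D\underline{\Hom}_{\La}(\tau^{-}(N),M)$, or equivalently work with the minimal injective presentation. The cleanest route is to build a minimal injective copresentation $0\to S(v)\to I^0\to I^1$ of $S(v)$ and apply the Nakayama-functor description of $\tau^{-}$: since $\tau^{-}(S(v))$ is the cokernel of the map $\nu^{-}(I^0)\to\nu^{-}(I^1)$ induced by applying $\nu^{-}=\Hom_{\La}(D\La,-)$ to the presentation, where the relevant injectives are determined by $\Omega^{-}(S(v))$ computed above.

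In case (b), with the unique arrow $w\to v$, the injective hull is $I(v)$ with $\Omega^{-}(S(v))\isom S(w)$, and then the minimal injective copresentation of $S(v)$ begins $0\to S(v)\to I(v)\to I(w)$; applying $\nu^{-}$ sends $I(v)\mapsto P(v)$ and $I(w)\mapsto P(w)$, so $\tau^{-}(S(v))$ is the cokernel of the induced map $P(v)\to P(w)$, which I would identify concretely as $\coker(S(v)\hookrightarrow P(w))$ by checking that the image of $P(v)$ in $P(w)$ is exactly the socle copy $S(v)\subseteq\rad(P(w))$ (here $S(v)$ sits in the socle of $P(w)$ precisely because of the arrow $w\to v$). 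In case (c), with $w_1\to v$ and $w_2\to v$, the socle of $I(v)$ is $S(v)$ but now $I(v)$ is \emph{not} projective; the point is that $\tau^{-}(S(v))\isom I(v)$, which I would obtain by noting that $S(v)$ is, up to the cosyzygy, the kernel of the canonical surjection and that $I(v)$ has no nonzero projective summand, so the Nakayama-functor cokernel collapses to $I(v)$ itself.

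The main obstacle I expect is pinning down the exact cokernel in case (b): identifying the induced map $P(v)\to P(w)$ and verifying that its image is precisely the simple $S(v)$ inside $\rad(P(w))$, so that the cokernel is $\coker(S(v)\hookrightarrow P(w))$ rather than some larger quotient. This requires a careful bookkeeping of tops and socles using $\rad^2(\La)=0$; the radical square zero hypothesis is exactly what keeps all the relevant modules in low dimension (tops and socles are semisimple and the radical of any projective is semisimple), so every map in sight is essentially determined on simple layers, which is what makes the explicit identifications tractable.
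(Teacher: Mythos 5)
Your overall route is the same as the paper's: read off the injective hull $I(v)$ from $\rad^2(\La)=0$ to get the cosyzygies, then apply the inverse Nakayama functor $\nu^{-}$ to the minimal injective copresentation $0\to S(v)\to I^0\to I^1$ and identify the cokernel of $\nu^{-}(I^0)\to\nu^{-}(I^1)$. Part (a), all three cosyzygy computations, and the cokernel identification in (b) (image of $P(v)\to P(w)$ equals the copy of $S(v)$ in $\soc(P(w))$) match the paper's proof.

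The gap is in part (c), which is exactly the case you wave through. The step "$I(v)$ has no nonzero projective summand, so the Nakayama-functor cokernel collapses to $I(v)$ itself" is not a valid deduction. What has to be shown is that $\coker\bigl(S(v)\hookrightarrow P(w_1)\oplus P(w_2)\bigr)\isom I(v)$, where $S(v)$ is embedded diagonally in the socles, and this is true only because $P(w_1)=\begin{smallmatrix} w_1 \\ v\end{smallmatrix}$ and $P(w_2)=\begin{smallmatrix} w_2 \\ v\end{smallmatrix}$ are two-dimensional, i.e.\ $\delta^{+}(w_1)=\delta^{+}(w_2)=1$. That fact is not a consequence of $\rad^2(\La)=0$; it is condition (iii) of Definition \ref{def:n-pre-admissible}, applied to the arrows $w_i\longrightarrow v$ with $\delta^{-}(v)=2$, and the lemma is stated under the standing hypothesis that $Q$ is $n$-pre-admissible. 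Without it the claim is false: if one adds an extra arrow $w_1\longrightarrow u$, then $P(w_1)$ is three-dimensional and the cokernel has dimension $4>3=\dim I(v)$, so $\tau^{-}(S(v))\not\isom I(v)$. In short, you have misplaced the main obstacle: case (b) goes through by the top/socle bookkeeping you describe, but case (c) requires invoking the $n$-pre-admissibility of $Q$, which your sketch never does.
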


\begin{proof}
By Theorem \ref{thrm:representation-finite string algebra} the algebra $\La$ is a string algebra. The Aus\-lan\-der--Rei\-ten translations for modules over string algebras are computed in \cite{ButRin}. We include here a simple proof in this special case. 
\begin{enumerate}[label=(\alph*)]
    \item If $\delta^{-}(v)=0$, then $S(v)=I(v)$ is injective and so $\Omega^{-}(S(v))=\tau^{-}(S(v))=0$.
    \item Since $\La$ is a radical square zero algebra and $\delta^{-}(v)=1$, we have that $I(v)=\begin{smallmatrix}
     w \\ v \end{smallmatrix}$. Hence there exists a minimal injective presentation of $S(v)$ of the form
    \[
    \begin{tikzcd}[column sep=small, row sep=small]
        0 \arrow[rr] &&
        S(v) \arrow[rr, "i_0"] && 
        I(v) \arrow[rr, "i_1"] \arrow[rd, two heads] && 
        I(w), \\
        {} && {} && {} & S(w) \arrow[ru, hook] && {}
    \end{tikzcd}
    \]
    from which it follows that $\Omega^{-}(S(v))\isom S(w)$. Furthermore, by applying the inverse Nakayama functor $\nu^{-}$ to the above presentation we obtain an exact sequence
    \[
    \begin{tikzcd}[column sep=small, row sep=small]
        0 \arrow[rr] && 
        \nu^{-}(S(v)) \arrow[rr, "\nu^{-}(i_0)"] &&
        P(v) \arrow[rr, "\nu^{-}(i_1)"] \arrow[rd, two heads] && 
        P(w) \arrow[rr]  && 
        \tau^{-}(S(v)), \\
        {} && {} && {} & S(v) \arrow[ru, hook] && {} && {}
    \end{tikzcd}
    \] 
    from which it follows that $\tau^{-}(S(v))\isom \coker(S(v)\hookrightarrow P(w))$.
    
    \item Since $\La$ is a radical square zero algebra and $\delta^{-}(v)=2$, we have that $I(v)=\begin{smallmatrix}
     w_1\;\;w_2 \\ v \end{smallmatrix}$. Hence there exists a minimal injective presentation of $S(v)$ of the form 
     \[
    \begin{tikzcd}[column sep=small, row sep=small]
        0 \arrow[rr] &&
        S(v) \arrow[rr, "i_0"] && 
        I(v) \arrow[rr, "i_1"] \arrow[rd, two heads] && 
        I(w_1)\oplus I(w_2), \\
        {} && {} && {} & S(w_1)\oplus S(w_2) \arrow[ru, hook] && {}
    \end{tikzcd}
    \]
    from which it follows that $\Omega^{-}(S(v))\isom S(w_1)\oplus S(w_2)$. By applying the inverse Nakayama functor $\nu^{-}$ to the above presentation we obtain an exact sequence
    \[
    \begin{tikzcd}[column sep=small, row sep=small]
        0 \arrow[rr] && 
        \nu^{-}(S(v)) \arrow[rr, "\nu^{-}(i_0)"] &&
        P(v) \arrow[rr, "\nu^{-}(i_1)"] \arrow[rd, two heads] && 
        P(w_1)\oplus P(w_2) \arrow[rr]  && 
        \tau^{-}(S(v)). \\
        {} && {} && {} & S(v) \arrow[ru, hook] && {} && {}
    \end{tikzcd}
    \]
    By Definition \ref{def:n-pre-admissible}(iii) we have that $P(w_1)=\begin{smallmatrix}
    w_1 \\ v
    \end{smallmatrix}$ and $P(w_2)=\begin{smallmatrix}
    w_2 \\ v
    \end{smallmatrix}$. Then $\coker(S(v)\hookrightarrow P(w_1)\oplus P(w_2))\isom I(v)$ and the result follows. \qedhere
\end{enumerate}
\end{proof}

We can now compute $\tau_n^{-j}(P(v))$ in the second case of Lemma \ref{lem:indecomposable projectives noninjectives come from flow paths, are injective or have degree (2,2)}, that is when $\delta(v)=(2,2)$. Notice that in this case we have $n=2$ by Definition \ref{def:n-pre-admissible}(i).

\begin{corollary}\label{cor:taun-inverse of P(v) for degree (2,2)}
Let $v\in Q_0$ be a vertex with $\delta(v)=(2,2)$. Then $\tau_2^{-}(P(v))\isom I(v)$.
\end{corollary}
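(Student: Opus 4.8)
The plan is to use the defining factorization $\tau_2^{-}=\tau^{-}\Omega^{-}$ and to compute the cosyzygy $\Omega^{-}(P(v))$ first, applying $\tau^{-}$ only at the very end via Lemma \ref{lem:cosyzygy and tau-inverse of simples}. Since $\delta(v)=(2,2)$, Definition \ref{def:n-pre-admissible}(i) forces $n=2$, so $\tau_2^{-}$ really is the relevant $n$-Auslander--Reiten translation here. Write $\alpha_1\colon v\longrightarrow u_1$ and $\alpha_2\colon v\longrightarrow u_2$ for the two arrows out of $v$. By Lemma \ref{lem:no multiple arrows} we have $u_1\neq u_2$, and by Lemma \ref{lem:no zigzags}(b) applied to the subquiver $u_1\longleftarrow v\longrightarrow u_2$ we get $\delta^{-}(u_1)=\delta^{-}(u_2)=1$, so for each $i$ the arrow $\alpha_i$ is the unique arrow into $u_i$. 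Since $\La$ is radical square zero and $\delta^{+}(v)=2$, this gives $P(v)=\begin{smallmatrix} v \\ u_1\; u_2\end{smallmatrix}$ with $\soc(P(v))=\rad(P(v))=S(u_1)\oplus S(u_2)$, and $\delta^{-}(u_i)=1$ gives $I(u_i)=\begin{smallmatrix} v \\ u_i\end{smallmatrix}$.

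Next I would compute $\Omega^{-}(P(v))$. The injective hull of $P(v)$ is the injective hull of its socle, namely $I(u_1)\oplus I(u_2)$, so there is a short exact sequence
\[
0\longrightarrow P(v)\longrightarrow I(u_1)\oplus I(u_2)\longrightarrow \Omega^{-}(P(v))\longrightarrow 0.
\]
Since composition factors are additive along this sequence, I would simply count them: the factors of $I(u_1)\oplus I(u_2)$ are $S(v),S(v),S(u_1),S(u_2)$, while those of $P(v)$ are $S(v),S(u_1),S(u_2)$. Hence $\Omega^{-}(P(v))$ has the single composition factor $S(v)$, and therefore $\Omega^{-}(P(v))\isom S(v)$.

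It then remains to apply $\tau^{-}$. As $\delta^{-}(v)=2$, Lemma \ref{lem:cosyzygy and tau-inverse of simples}(c) yields $\tau^{-}(S(v))\isom I(v)$, and so $\tau_2^{-}(P(v))=\tau^{-}\Omega^{-}(P(v))\isom\tau^{-}(S(v))\isom I(v)$, as claimed. I expect the only point requiring genuine attention to be the structural setup at the start: that the two out-neighbours are distinct and have in-degree one, so that each $I(u_i)$ is two-dimensional with top $S(v)$. This is exactly where Lemma \ref{lem:no multiple arrows} and Lemma \ref{lem:no zigzags}(b) are needed; once these are in place, the cosyzygy is pinned down by composition-factor counting and the conclusion is immediate, so there is no serious obstacle beyond this bookkeeping.
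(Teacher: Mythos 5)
Your proof is correct and follows essentially the same route as the paper: show $\Omega^{-}(P(v))\isom S(v)$ (the paper leaves this computation implicit, you spell it out via the injective hull $I(u_1)\oplus I(u_2)$ and composition-factor counting) and then apply Lemma \ref{lem:cosyzygy and tau-inverse of simples}(c). One caveat on citations: in the context of this corollary only $n$-pre-admissibility of $Q$ is assumed --- no $n$-cluster tilting subcategory is available, and indeed the corollary is later used in Section \ref{sec:sufficient conditions} to \emph{construct} one --- so the facts that $u_1\neq u_2$ and $\delta^{-}(u_1)=\delta^{-}(u_2)=1$ should be drawn from Definition \ref{def:n-pre-admissible}(ii) and (iii) rather than from Lemma \ref{lem:no multiple arrows} and Lemma \ref{lem:no zigzags}, whose hypotheses include the existence of an $n$-cluster tilting subcategory; the needed statements are exactly the defining conditions of $n$-pre-admissibility, so this is a citation fix rather than a mathematical gap.
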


\begin{proof}
Let $v\longrightarrow u_1$ and $v\longrightarrow u_2$ be the arrows starting at $v$. By Definition \ref{def:n-pre-admissible}(iii) we have that $\delta^{-}(u_1)=\delta^{-}(u_2)=1$. It follows that $\Omega^{-}(P(v))\isom S(v)$. By Lemma \ref{lem:cosyzygy and tau-inverse of simples}(c) we have that $\tau^{-}(S(v))\isom I(v)$. Hence 
\[
\tau_2^{-}(P(v)) = \tau^{-}\Omega^{-}(P(v)) \isom \tau^{-}(S(v))\isom I(v),
\]
as required.
\end{proof}

Before continuing with the computation of $\tau_n^{-j}(P(v))$ in the last case, that is when $P(v)=P(\mathbf{v})$ for a flow path $\mathbf{v}$ in $Q$, let us introduce one more piece of notation.

\begin{definition}\label{def:q of flow path}
Let $\mathbf{v}=v_1\longrightarrow v_2 \longrightarrow \cdots \longrightarrow v_k$ be a $k$-flow path. We define
\[q_1= q_1(\mathbf{v}) \coloneqq \begin{cases} 1, &\mbox{if $\delta(v_1)=(2,1)$,} \\ 0, &\mbox{if $\delta(v_1)\neq (2,1)$,}\end{cases} \text{ and }
q_k=q_k(\mathbf{v})\coloneqq \begin{cases} 1, &\mbox{if $\delta(v_k)=(1,2)$,} \\ 0, &\mbox{if $\delta(v_k)\neq (1,2)$.}\end{cases}\]
We also define 
\[
q(\mathbf{v}) \coloneqq -1+q_1+q_k = \begin{cases} 1, &\mbox{if $\delta(v_1)=(2,1)$ and $\delta(v_k)=(1,2)$,} \\ 0, &\mbox{if either $\delta(v_1)=(2,1)$ or $\delta(v_k)=(1,2)$,} \\ -1, &\mbox{if $\delta(v_1)\neq (2,1)$ and $\delta(v_k)\neq (1,2)$.}\end{cases}
\]
\end{definition}

With this definition we can write some of the next results in a more compact way. First we have the following statement.

\begin{lemma}\label{lem:vertex between 2 and k-1+q_k has incoming degree 1}
Let $\mathbf{v}$ be a $k$-flow path in $Q$. Let $s\in \ZZ$ and assume that $2\leq s\leq k-1+q_k$. Then $\delta^{-}(v_s)=1$.
\end{lemma}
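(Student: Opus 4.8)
The plan is to reduce the claim to a direct reading of the two relevant definitions, splitting into the cases $2\leq s\leq k-1$ and $s=k$. Since $q_k\in\{0,1\}$ by Definition \ref{def:q of flow path}, the hypothesis $2\leq s\leq k-1+q_k$ leaves only these two possibilities: the upper bound $k-1+q_k$ never exceeds $k$, and it equals $k$ precisely when $q_k=1$. Thus the range $2\leq s\leq k-1+q_k$ is contained in $\{2,\ldots,k-1\}\cup\{k\}$, and I would handle these two pieces in turn.

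For the first case, $2\leq s\leq k-1$, I would simply apply the definition of a $k$-flow path. Indeed, $2\leq s\leq k-1$ is the same as $1<s<k$, and by Definition \ref{def:flow path} this forces $\delta(v_s)=(1,1)$; in particular $\delta^{-}(v_s)=1$, as required. For the second case, $s=k$, I would first note that this index is admitted by the hypothesis only when $k\leq k-1+q_k$, that is $q_k\geq 1$, hence $q_k=1$. By Definition \ref{def:q of flow path}, $q_k=1$ means exactly that $\delta(v_k)=(1,2)$, so again $\delta^{-}(v_k)=1$.

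Since these two cases exhaust the range $2\leq s\leq k-1+q_k$, this would complete the proof. I do not expect any genuine obstacle here: the statement is purely a matching of the arithmetic in the index bound $k-1+q_k$ against the definition of $q_k$, and the only point requiring care is to confirm that the boundary index $s=k$ is included exactly in the regime $\delta(v_k)=(1,2)$, which is precisely the regime in which the incoming degree of $v_k$ equals $1$.
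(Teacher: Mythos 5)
Your proposal is correct and follows exactly the same case split as the paper's proof: for $2\leq s\leq k-1$ the definition of a flow path gives $\delta(v_s)=(1,1)$, and for $s=k$ the bound forces $q_k=1$, hence $\delta(v_k)=(1,2)$ by the definition of $q_k$. No issues.
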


\begin{proof}
We have $s\leq k-1+q_k\leq k$. We consider the cases $s\leq k-1$ and $s=k$ separately. If $s\leq k-1$, then $\delta(v_s)=(1,1)$ by the definition of flow paths and so the result holds. If $s=k$, then $k-1+q_k=k$ and so $q_k=1$. Then by the definition of $q_k$ we have $\delta(v_s)=\delta(v_k)=(1,2)$, and so the result holds again.
\end{proof}

With this we are ready to make the following computations.

\begin{lemma}\label{lem:n-th cosyzygy and taun-inverse of simples in middle of k-flow paths}
Let $\mathbf{v}$ be a $k$-flow path in $Q$. Let $s,x\in \ZZ_{\geq 0}$ and assume that $1\leq s\leq k-1+q_k$. 
\begin{enumerate}[label=(\alph*)]
    \item If $s-x\geq 1$, then $\Omega^{-x}(S(v_s))\isom S(v_{s-x})$.
    \item If $1\leq x \leq s-1+q_1$, then $\tau_x^{-}(S(v_s)) \isom \begin{cases} S(v_{s-x}), &\mbox{if $1\leq x<s-1+q_1$,} \\ I(\mathbf{v}), &\mbox{if $x=s-1+q_1$.} \end{cases}$
\end{enumerate}
\end{lemma}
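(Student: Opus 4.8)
The plan is to reduce everything to the one-step formulas of Lemma \ref{lem:cosyzygy and tau-inverse of simples}, using Lemma \ref{lem:vertex between 2 and k-1+q_k has incoming degree 1} to guarantee that each vertex we hit in the interior of the flow path has incoming degree $1$. For part (a) I would induct on $x$, the case $x=0$ being trivial. For the inductive step, assuming $s-x\geq 1$ and $\Omega^{-(x-1)}(S(v_s))\isom S(v_{s-x+1})$, I note that $2\leq s-x+1\leq s\leq k-1+q_k$, so Lemma \ref{lem:vertex between 2 and k-1+q_k has incoming degree 1} gives $\delta^{-}(v_{s-x+1})=1$ with unique incoming arrow $v_{s-x}\longrightarrow v_{s-x+1}$. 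Since $\delta^{-}(v_{s-x+1})=1\neq 0$ the module $S(v_{s-x+1})$ is not injective, so the iterated cosyzygy is computed cleanly and Lemma \ref{lem:cosyzygy and tau-inverse of simples}(b) yields $\Omega^{-}(S(v_{s-x+1}))\isom S(v_{s-x})$. Composing gives $\Omega^{-x}(S(v_s))\isom S(v_{s-x})$.

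For part (b) I would use $\tau_x^{-}=\tau^{-}\Omega^{-(x-1)}$. The hypothesis $1\leq x\leq s-1+q_1$ forces $s-(x-1)\geq 2-q_1\geq 1$, so part (a) applies and gives $\Omega^{-(x-1)}(S(v_s))\isom S(v_t)$ with $t\coloneqq s-x+1$; hence $\tau_x^{-}(S(v_s))\isom \tau^{-}(S(v_t))$, and it remains to evaluate $\tau^{-}(S(v_t))$ via Lemma \ref{lem:cosyzygy and tau-inverse of simples}, splitting along the two cases of the statement.

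In the case $1\leq x< s-1+q_1$ one has $t\geq 3-q_1\geq 2$, so $\delta^{-}(v_t)=1$ and $\tau^{-}(S(v_t))\isom\coker(S(v_t)\hookrightarrow P(v_{t-1}))$. Here $v_{t-1}$ has $\delta^{+}(v_{t-1})=1$: either $t-1\geq 2$, in which case $1<t-1\leq k-1$ makes $v_{t-1}$ an interior vertex of degree $(1,1)$, or $t-1=1$, which (being in this case) forces $q_1=1$ and hence $\delta(v_1)=(2,1)$. In both situations $P(v_{t-1})=\begin{smallmatrix}v_{t-1}\\v_t\end{smallmatrix}$, so the cokernel is $S(v_{t-1})=S(v_{s-x})$. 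In the case $x=s-1+q_1$ one has $t=2-q_1$. If $q_1=1$ then $t=1$ and $\delta(v_1)=(2,1)$, so Lemma \ref{lem:cosyzygy and tau-inverse of simples}(c) gives $\tau^{-}(S(v_1))\isom I(v_1)=I(\mathbf{v})$. If $q_1=0$ then $t=2$ and $\tau^{-}(S(v_2))\isom\coker(S(v_2)\hookrightarrow P(v_1))$, which I would evaluate by splitting on $\delta^{+}(v_1)$: when $\delta^{+}(v_1)=1$ we have $\delta(v_1)=(0,1)$ and the cokernel is $S(v_1)=I(v_1)=I(\mathbf{v})$; when $\delta^{+}(v_1)=2$, writing $v^{-1}$ for the second out-neighbour of $v_1$, the cokernel is $\begin{smallmatrix}v_1\\v^{-1}\end{smallmatrix}$, and Definition \ref{def:n-pre-admissible}(iii) forces $\delta^{-}(v^{-1})=1$, so this module equals $I(v^{-1})=I(\mathbf{v})$.

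The main obstacle is the boundary bookkeeping at the initial vertex $v_1$ in part (b): one must correctly match the combinatorial quantities $q_1$ and the degree of $v_1$ with the module-theoretic outcome, distinguishing the four possible degrees of $v_1$ with $\delta^{+}(v_1)\geq 1$. The genuinely delicate point is the out-degree-two subcase, where the cokernel is two-dimensional and must be identified with the injective $I(\mathbf{v})=I(v^{-1})$; this is precisely where $n$-pre-admissibility condition (iii) is needed, as it pins down $\delta^{-}(v^{-1})=1$ and hence the shape of $I(v^{-1})$. Everything away from the endpoints is a routine propagation through interior $(1,1)$-vertices.
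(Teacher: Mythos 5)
Your proposal is correct and follows essentially the same route as the paper: induction on $x$ via Lemma \ref{lem:vertex between 2 and k-1+q_k has incoming degree 1} and Lemma \ref{lem:cosyzygy and tau-inverse of simples}(b) for part (a), then the reduction $\tau_x^{-}=\tau^{-}\Omega^{-(x-1)}$ and the same case split on $x<s-1+q_1$ versus $x=s-1+q_1$ (with the subcases $q_1=0,1$ and, in the former, $\delta^{+}(v_1)\in\{1,2\}$) for part (b). The only cosmetic difference is that you evaluate the cokernels of Lemma \ref{lem:cosyzygy and tau-inverse of simples}(b) directly where the paper reduces to checking $\delta^{+}(v_{s-x})=1$; the content is identical.
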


\begin{proof}
\begin{enumerate}[label=(\alph*)]
    \item We use induction on $x$. If $x=0$, then the result holds trivially. Assume now that the result holds for $x-1\geq 0$ and we show that it holds for $x$. Since $s-x\geq 1$, we have that $s-(x-1)\geq 1$. Hence by induction hypothesis we have that $\Omega^{-(x-1)}(S(v_s))\isom S(v_{s-(x-1)})$. Then
    \[
    2 =1+1\leq (s-x)+1 = s-(x-1) \leq s \leq k-1+q_k, 
    \]
    and so $\delta^{-}(v_{s-(x-1)})=1$ by Lemma \ref{lem:vertex between 2 and k-1+q_k has incoming degree 1}. Then by the definition of flow paths and Lemma \ref{lem:cosyzygy and tau-inverse of simples}(b) applied on $v_{s-(x-1)}$ it follows that 
    \[
    \Omega^{-x}(S(v_s)) \isom \Omega^{-}(S(v_{s-(x-1)})) \isom S(v_{s-x}),
    \]
    as required.
    
    \item Since $x \leq s-1+q_1$, we have that $s-(x-1)\geq 2-q_1\geq 1$. Therefore, by (a) we have that
    \[
    \tau_x^{-}(S(v_s)) = \tau^{-}\Omega^{-(x-1)}(S(v_s))\isom \tau^{-}(S(v_{s-(x-1)})).
    \]
    Hence it is enough to show that 
    \[
    \tau^{-}(S(v_{s-(x-1)}))\isom \begin{cases} S(v_{s-x}), &\mbox{if $1\leq x<s-1+q_1$,} \\ I(\mathbf{v}), &\mbox{if $x=s-1+q_1$.} \end{cases}
    \]
    We consider the cases $1\leq x < s-1+q_1$ and $x=s-1+q_1$ separately.
    
    Case $1\leq x <s-1+q_1$. In this case we want to show that $\tau^{-}(S(v_{s-(x-1)}))\isom S(v_{s-x})$. We have 
    \begin{equation}\label{eq:where does the vertex lie}
        2-q_1 < s-(x-1) \leq s \leq k-1+q_k\leq k.
    \end{equation}
    Hence $2\leq s-(x-1)\leq k-1+q_k$ and so by Lemma \ref{lem:vertex between 2 and k-1+q_k has incoming degree 1} we have that $\delta^{-}(v_{s-(x-1)})=1$. It follows from Lemma \ref{lem:cosyzygy and tau-inverse of simples}(b) that it is enough to show that $\delta^{+}(v_{s-x})=1$. We consider the subcases $q_1=0$ and $q_1=1$ separately. 
    \begin{itemize}
        \item Subcase $q_1=0$. Then by (\ref{eq:where does the vertex lie}) we conclude that $2\leq s-x\leq k-1$ and so $\delta^{+}(v_{s-x})=1$.
        \item Subcase $q_1=1$. Then by (\ref{eq:where does the vertex lie}) we conclude that $1\leq s-x\leq k-1$. Since in this case we have $\delta(v_1)=(2,1)$, it follows that $\delta^{+}(v_{s-x})=1$.
    \end{itemize}

    Case $x=s-1+q_1$. In this case we have $s-(x-1)=2-q_1$ and we want to show that $\tau^{-}(S(v_{2-q_1}))\isom I(\mathbf{v})$. We consider the cases $q_1=0$ and $q_1=1$ separately.
    \begin{itemize}
        \item Subcase $q_1=0$. Then the result follows immediately by Lemma \ref{lem:cosyzygy and tau-inverse of simples}(b) and by considering the possibilities $\delta(v_1)=(0,1)$, $\delta(v_1)=(1,2)$ and $\delta(v_1)=(2,2)$ separately.
        \item Subcase $q_1=1$. Then $\delta(v_1)=(2,1)$ and by Lemma \ref{lem:cosyzygy and tau-inverse of simples}(c) we have $\tau^{-}(S(v_1))\isom I(v_1) = I(\mathbf{v})$. \qedhere
    \end{itemize}
\end{enumerate}
\end{proof}

\begin{lemma}\label{lem:n-th cosyzygy and taun-inverse of P(v)}
Let $\mathbf{v}$ be a $k$-flow path in $Q$. Let $x\in\ZZ_{\geq 1}$.
\begin{enumerate}[label=(\alph*)]
    \item If $k-x+q_k\geq 1$, then $\Omega^{-x}(P(\mathbf{v}))\isom S(v_{k-x+q_k})$.
    
    \item If $1\leq x \leq k+q(\mathbf{v})$, then $\tau_x^{-}(P(\mathbf{v}))\isom \begin{cases} S(v_{k-x+q_k}), &\mbox{if $1\leq x<k+q(\mathbf{v})$,} \\ I(\mathbf{v}), &\mbox{if $x=k+q(\mathbf{v})$.} \end{cases}$
\end{enumerate}
\end{lemma}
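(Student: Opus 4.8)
The plan is to reduce both statements to the computations for simple modules already carried out in Lemma \ref{lem:n-th cosyzygy and taun-inverse of simples in middle of k-flow paths}. The bridge between $P(\mathbf{v})$ and the simples $S(v_s)$ is supplied by two base computations: the first cosyzygy $\Omega^{-}(P(\mathbf{v}))$ and the first Auslander--Reiten translate $\tau^{-}(P(\mathbf{v}))$. Once these are identified, everything else follows by iteration, since $\tau_x^{-}=\tau^{-}\Omega^{-(x-1)}$.

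First I would prove that $\Omega^{-}(P(\mathbf{v}))\isom S(v_{k-1+q_k})$. Recall that $\delta(v_k)\in\{(1,0),(1,2),(2,1),(2,2)\}$ (as in the proof of Lemma \ref{lem:same P(v) implies same path}), and split according to whether $\delta^{-}(v_k)=1$ or $\delta^{-}(v_k)=2$. In the first case $P(\mathbf{v})=P(v_k)$, and since $\La$ is radical square zero the socle of $P(v_k)$ together with Definition \ref{def:n-pre-admissible}(iii) determines its injective hull explicitly; computing the cokernel of $P(v_k)\hookrightarrow I(P(v_k))$ gives $S(v_{k-1})$ when $\delta(v_k)=(1,0)$ (so $q_k=0$) and $S(v_k)$ when $\delta(v_k)=(1,2)$ (so $q_k=1$), that is $S(v_{k-1+q_k})$ in both subcases. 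In the second case $P(\mathbf{v})=P(v^{+1})=\begin{smallmatrix} v^{+1}\\ v_k\end{smallmatrix}$, whose injective hull is $I(v_k)=\begin{smallmatrix} v_{k-1}\;\; v^{+1}\\ v_k\end{smallmatrix}$ (here $q_k=0$), and the cokernel is $S(v_{k-1})=S(v_{k-1+q_k})$. Part (a) then follows: for $x\geq 1$ we have $\Omega^{-x}(P(\mathbf{v}))\isom \Omega^{-(x-1)}(S(v_{k-1+q_k}))$, and applying Lemma \ref{lem:n-th cosyzygy and taun-inverse of simples in middle of k-flow paths}(a) with $s=k-1+q_k$ (noting that the hypothesis $k-x+q_k\geq 1$ is exactly $s-(x-1)\geq 1$) yields $S(v_{k-x+q_k})$.

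For part (b), the computation of $\Omega^{-}(P(\mathbf{v}))$ already disposes of the case $x\geq 2$: factoring out one cosyzygy,
\[
\tau_x^{-}(P(\mathbf{v}))=\tau^{-}\Omega^{-(x-2)}\big(\Omega^{-}(P(\mathbf{v}))\big)\isom \tau_{x-1}^{-}(S(v_{k-1+q_k})).
\]
Now I would apply Lemma \ref{lem:n-th cosyzygy and taun-inverse of simples in middle of k-flow paths}(b) with $s=k-1+q_k$; a direct check gives $s-1+q_1=k-1+q(\mathbf{v})$, so the admissible range $1\leq x-1\leq s-1+q_1$ becomes $2\leq x\leq k+q(\mathbf{v})$, while the two outputs $S(v_{s-(x-1)})$ and $I(\mathbf{v})$ become $S(v_{k-x+q_k})$ (for $x<k+q(\mathbf{v})$) and $I(\mathbf{v})$ (for $x=k+q(\mathbf{v})$), exactly as claimed.

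The hard part will be the base case $x=1$ of part (b), where $\tau_1^{-}=\tau^{-}$ does not factor through a cosyzygy and must be computed by hand. Here I would write down the minimal injective copresentation $0\to P(\mathbf{v})\to I(P(\mathbf{v}))\to I(v_{k-1+q_k})$ (using $\Omega^{-}(P(\mathbf{v}))\isom S(v_{k-1+q_k})$ from the first step), apply the inverse Nakayama functor $\nu^{-}$, and identify the cokernel, just as in the proof of Lemma \ref{lem:cosyzygy and tau-inverse of simples}. Generically the image is the radical of $P(v_{k-1+q_k})$ and the cokernel is $S(v_{k-1+q_k})$, which matches the formula since then $k+q(\mathbf{v})>1$. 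The subtlety is the boundary case $k+q(\mathbf{v})=1$, which by $k\geq 2$ forces $k=2$ and $q_1=q_k=0$; there the cosyzygy chain is too short and the translate equals $I(\mathbf{v})$ instead, in agreement with the $x=k+q(\mathbf{v})$ branch of the statement. Carefully verifying this boundary computation, and checking its consistency with the definition of $I(\mathbf{v})$ in both subcases $\delta^{+}(v_1)=1$ and $\delta^{+}(v_1)=2$, is where most of the care is needed.
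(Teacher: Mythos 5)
Your proposal is correct and follows essentially the same route as the paper: establish $\Omega^{-}(P(\mathbf{v}))\isom S(v_{k-1+q_k})$ and the $x=1$ case of $\tau^{-}(P(\mathbf{v}))$ by a case analysis on $\delta(v_k)$, then reduce $x\geq 2$ to Lemma \ref{lem:n-th cosyzygy and taun-inverse of simples in middle of k-flow paths} via $\tau_x^{-}=\tau_{x-1}^{-}\Omega^{-}$, with the same index bookkeeping $s-1+q_1=k-1+q(\mathbf{v})$. The only cosmetic difference is that for the $x=1$ base case the paper invokes the dual of Lemma \ref{lem:cosyzygy and tau-inverse of simples} (statements of the form $\tau(S)\isom P$) in several subcases rather than recomputing with $\nu^{-}$ directly, and you correctly isolate the boundary case $k+q(\mathbf{v})=1$ where the answer is $I(\mathbf{v})$.
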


\begin{proof}
\begin{enumerate}[label=(\alph*)]
    \item If $x=1$, then the result follows immediately by considering the cases $\delta(v_k)=(1,0)$, $\delta(v_k)=(1,2)$, $\delta(v_k)=(2,1)$ and $\delta(v_k)=(2,2)$ separately (recall that if $\delta(v_k)=(1,2)$, then $\delta^{-}(v^{+2})=\delta^{-}(v^{+3})=1$ by Definition \ref{def:n-pre-admissible}(iii)). For $x\geq 2$ notice that $1\leq k-x+q_k$ implies that $k-1+q_k-(x-1)\geq 1$. Hence we can apply Lemma \ref{lem:n-th cosyzygy and taun-inverse of simples in middle of k-flow paths}(a) to obtain
    \[
    \Omega^{-x}(P(\mathbf{v}))=\Omega^{-(x-1)}\Omega^{-}(P(\mathbf{v}))\isom \Omega^{-(x-1)}(S(v_{k-1+q_k})) \isom S(v_{k-x+q_k}),
    \]
    as required.
    
    \item We first show the result for $x=1$. We consider the cases $1=x < k+q(\mathbf{v})$ and $1=x=k+q(\mathbf{v})$ separately.
    
    Case $1=x<k+q(\mathbf{v})$. In this case we want to show that $\tau^{-}(P(\mathbf{v}))\isom S(v_{k-1+q_k})$. We consider the subcases $\delta(v_k)=(1,0)$, $\delta(v_k)=(1,2)$ and $\delta(v_k)\in\{(2,1),(2,2)\}$ separately.
    \begin{itemize}
        \item Subcase $\delta(v_k)=(1,0)$. In this case $q_k=0$ and  $P(\mathbf{v})=S(v_k)$ and so we want to show that $\tau^{-}(S(v_k))\isom S(v_{k-1})$. We claim that $\delta^{+}(v_{k-1})=1$. Indeed, assume towards a contradiction that $\delta^{+}(v_{k-1})=2$. Then $v_{k-1}=v_1$ and so $k=2$ and $q_1=0$. Hence $1=x<2+q(\mathbf{v})=2-1=1$, which is a contradiction. Hence by Lemma \ref{lem:cosyzygy and tau-inverse of simples}(b) and since $\delta^{+}(v_{k-1})=1$, it follows that $\tau^{-}(S(v_k))\isom S(v_{k-1})$.
        
        \item Subcase $\delta(v_k)=(1,2)$. In this case $q_k=1$ and $P(\mathbf{v})=P(v_{k})$ and so we want to show that $\tau^{-}(P(v_k))\isom S(v_{k})$. By the dual of Lemma \ref{lem:cosyzygy and tau-inverse of simples}(c) we have that $\tau(S(v_{k}))\isom P(v_k)$. By applying $\tau^{-}$ we obtain $\tau^{-}(P(v_k))\isom\tau^{-}\tau(S(v_{k}))\isom S(v_k)$.
        
        \item Subcase $\delta(v_k)\in\{(2,1),(2,2)\}$. In this case $q_k=0$ and $P(\mathbf{v})=P(v^{+1})$ and so we want to show that $\tau^{-}(P(v^{+1}))\isom S(v_{k-1})$. By Definition \ref{def:n-pre-admissible}(iii) we have that $\delta^{+}(v_{k-1})=1$. By the dual of Lemma \ref{lem:cosyzygy and tau-inverse of simples}(b) we then have that $\tau(S(v_{k-1}))\isom P(v^{+1})$. By applying $\tau^{-}$ we obtain $\tau^{-}(P(v^{+1}))\isom \tau^{-}\tau(S(v_{k-1}))\isom S(v_{k-1})$. 
    \end{itemize}
    
    Case $1=x=k+q(\mathbf{v})$. In this case we have that $k=2$ and $q(\mathbf{v})=-1$ and we want to show that $\tau^{-}(P(\mathbf{v}))\isom I(\mathbf{v})$. Since $q(\mathbf{v})=-1$, we have $\delta(v_1)\neq (2,1)$ and $\delta(v_2)\neq (1,2)$. We consider the subcases $\delta(v_2)=(1,0)$ and $\delta(v_2)\in\{(2,1),(2,2)\}$ separately.
    \begin{itemize}
        \item Subcase $\delta(v_2)=(1,0)$. In this case we have $P(\mathbf{v})=S(v_2)$ and so we want to show that $\tau^{-}(S(v_2))\isom I(\mathbf{v})$. If $\delta^{+}(v_1)=1$, since $\delta(v_1)\neq (2,1)$ and since by the definition of a flow path we have $\delta(v_1)\neq (1,1)$, we conclude that $\delta(v_1)=(0,1)$. Hence by Lemma \ref{lem:cosyzygy and tau-inverse of simples}(b) we have $\tau^{-}(S(v_2))\isom S(v_1)= I(\mathbf{v})$, where the last equality follows from the definition of $I(\mathbf{v})$. If $\delta^{+}(v_1)=2$, then by Lemma \ref{lem:cosyzygy and tau-inverse of simples}(b) and Definition \ref{def:n-pre-admissible}(iii) we have $\tau^{-}(S(v_2))\isom I(v^{-1}) = I(\mathbf{v})$, where the last equality again follows from the definition of $I(\mathbf{v})$.
        
        \item Subcase $\delta(v_2)\in\{(2,1),(2,2)\}$. In this case we have $P(\mathbf{v})= P(v^{+1})$ and so we want to show that $\tau^{-}(P(v^{+1}))\isom I(\mathbf{v})$. Since $k=2$ and $\delta^{-}(v_2)=2$, by Definition \ref{def:n-pre-admissible}(iii) we have that $\delta^{+}(v_1)=1$. Since $\delta(v_1)\neq (2,1)$ and since by the definition of a flow path we have $\delta(v_1)\neq (1,1)$, we conclude that $\delta(v_1)=(0,1)$. It follows that $I(\mathbf{v})=S(v_1)$. By the dual of Lemma \ref{lem:cosyzygy and tau-inverse of simples}(b) we then have that $\tau(S(v_1))\isom P(v^{+1})$. By applying $\tau^{-}$ we obtain $\tau^{-}(P(v^{+1}))\isom \tau^{-}\tau(S(v_1))\isom S(v_1)=I(\mathbf{v})$. 
    \end{itemize}
    
    Now let $x\geq 2$. Then $2\leq k+q(\mathbf{v})$ gives $k-1+q_k\geq 1$. Hence by (a) we have that
    \[
    \tau_x^{-}(P(\mathbf{v})) = \tau_{x-1}^{-}\Omega^{-}(P(\mathbf{v})) \isom \tau_{x-1}^{-}(S(v_{k-1+q_k})).
    \]
    Moreover, since $1\leq k-1+q_k$ and 
    \[
    (k-1+q_k)-1+q_1=k+q(\mathbf{v})-1\geq x-1\geq 1,
    \]
    we can apply Lemma \ref{lem:n-th cosyzygy and taun-inverse of simples in middle of k-flow paths}(b) to obtain
    \[
    \tau_{x-1}^{-}(S(v_{k-1+q_k})) \isom \begin{cases} S(v_{k-1+q_k-(x-1)}), &\mbox{if $1 \leq x-1 < (k-1+q_k)-1+q_1$,} \\ I(\mathbf{v}), &\mbox{if $x-1=(k-1+q_k)-1+q_1$.} \end{cases}
    \]
    After simplifying the above expression, we get
    \[
    \tau_x^{-}(P(\mathbf{v})) \isom \tau_{x-1}^{-}(S(v_{k-1+q_k})) \isom \begin{cases} S(v_{k-x+q_k}), &\mbox{if $2\leq x <k+q(\mathbf{v})$,} \\ I(\mathbf{v}), &\mbox{if $x=k+q(\mathbf{v})$,} \end{cases}
    \]
    which proves the case $x\geq 2$. \qedhere
\end{enumerate}
\end{proof}

With the above computation we can show the following important results about flow paths in $Q$.

\begin{proposition}\label{prop:n divides k+q(v)}
Let $\mathbf{v}$ be a $k$-flow path in $Q$ and assume that $\cC\subseteq \m\La$ is an $n$-cluster tilting subcategory. Then $n\divides (k+q(\mathbf{v}))$.
\end{proposition}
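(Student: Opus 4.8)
The plan is to argue by contradiction: assuming $n\nmid(k+q(\mathbf{v}))$, I will exhibit two objects of $\cC$ with a nonzero $\Ext^d$ for some $d$ with $0<d<n$, contradicting the $n$-rigidity of $\cC$. The key external inputs will be Proposition \ref{prop:n-ct is closed under n-AR translations and uniqueness of n-ct} (closure of $\cC$ under $\tau_n^{-}$), Lemma \ref{lem:taun-inverse gives nonzero ext} (a nonzero higher translate forces a nonzero $\Ext$), and the explicit computations in Lemma \ref{lem:n-th cosyzygy and taun-inverse of P(v)} and Lemma \ref{lem:n-th cosyzygy and taun-inverse of simples in middle of k-flow paths}.

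First I would record membership in $\cC$. Since $P(\mathbf{v})=P(v)$ is an indecomposable projective, it is a direct summand of $\La$, and by Proposition \ref{prop:n-ct is closed under n-AR translations and uniqueness of n-ct}(c) we have $\bigoplus_{j\geq 0}\tau_n^{-j}(\La)\in\cC$; as $\tau_n^{-}$ is additive and $\cC$ is closed under direct summands, it follows that $\tau_n^{-j}(P(\mathbf{v}))\in\cC$ for all $j\geq 0$. The technical heart is then to identify the \emph{iterated} translate $\tau_n^{-j}(P(\mathbf{v}))$ with the \emph{single} translate $\tau_{jn}^{-}(P(\mathbf{v}))$. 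I would prove by induction on $j$ that
\[
\tau_n^{-j}(P(\mathbf{v}))\isom\tau_{jn}^{-}(P(\mathbf{v}))\quad\text{whenever } jn\leq k+q(\mathbf{v}).
\]
The cases $j\leq 1$ are immediate from the definition $\tau_n^{-}=\tau^{-}\Omega^{-(n-1)}$. For the step from $j\geq 1$ to $j+1$ (assuming $(j+1)n\leq k+q(\mathbf{v})$), the inductive hypothesis together with Lemma \ref{lem:n-th cosyzygy and taun-inverse of P(v)}(b) identifies $\tau_n^{-j}(P(\mathbf{v}))$ with the simple $S(v_{k-jn+q_k})$, and applying $\tau_n^{-}$ to this simple via Lemma \ref{lem:n-th cosyzygy and taun-inverse of simples in middle of k-flow paths}(b) reproduces exactly the two cases of Lemma \ref{lem:n-th cosyzygy and taun-inverse of P(v)}(b) for the index $(j+1)n$; one checks the numerical hypotheses $1\leq s\leq k-1+q_k$ and $n\leq s-1+q_1$ of the latter lemma (with $s=k-jn+q_k$) hold precisely because $j\geq 1$ and $(j+1)n\leq k+q(\mathbf{v})$.

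Now suppose $n\nmid(k+q(\mathbf{v}))$ and let $j_0$ be the largest integer with $j_0 n<k+q(\mathbf{v})$; set $d\coloneqq (k+q(\mathbf{v}))-j_0 n$, so that $0<d<n$. By the telescoping identity, $M\coloneqq\tau_n^{-j_0}(P(\mathbf{v}))\in\cC$ is isomorphic to $\tau_{j_0 n}^{-}(P(\mathbf{v}))$, which by Lemma \ref{lem:n-th cosyzygy and taun-inverse of P(v)}(b) equals the simple module $S(v_{k-j_0 n+q_k})$ (or $P(\mathbf{v})$ itself when $j_0=0$). Writing $s=k-j_0 n+q_k$, a direct check gives $d=s-1+q_1$, so Lemma \ref{lem:n-th cosyzygy and taun-inverse of simples in middle of k-flow paths}(b) (respectively Lemma \ref{lem:n-th cosyzygy and taun-inverse of P(v)}(b) in the case $j_0=0$) yields $\tau_d^{-}(M)\isom I(\mathbf{v})$. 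Since $I(\mathbf{v})\neq 0$, Lemma \ref{lem:taun-inverse gives nonzero ext} gives $\Ext_{\La}^{d}(I(\mathbf{v}),M)\neq 0$. But $M\in\cC$ and $I(\mathbf{v})\in\cC$ (being injective, it is a summand of $D(\La)\in\cC$), while $0<d<n$, contradicting $n$-rigidity. Hence $n\divides(k+q(\mathbf{v}))$.

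The hard part is the identification $\tau_n^{-j}(P(\mathbf{v}))\isom\tau_{jn}^{-}(P(\mathbf{v}))$: a priori $\tau_n^{-j}=(\tau^{-}\Omega^{-(n-1)})^{j}$ need not coincide with $\tau_{jn}^{-}=\tau^{-}\Omega^{-(jn-1)}$, and the equality holds here only because, along a flow path, both $\Omega^{-}$ and $\tau^{-}$ act as the one-step backward shift on the interior simples $S(v_s)$, so the two compositions telescope. Keeping the index conditions of the two cosyzygy lemmas aligned through the induction — in particular ensuring one stays within the range $jn\leq k+q(\mathbf{v})$ where $\tau_x^{-}(P(\mathbf{v}))$ remains a genuine simple rather than overshooting past $I(\mathbf{v})$ — is the delicate bookkeeping that makes the whole argument work.
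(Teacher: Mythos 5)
Your proposal is correct and follows essentially the same route as the paper: both arguments translate $P(\mathbf{v})$ down the flow path via Lemma \ref{lem:n-th cosyzygy and taun-inverse of P(v)}(b) and repeated applications of Lemma \ref{lem:n-th cosyzygy and taun-inverse of simples in middle of k-flow paths}(b), land on the module $S(v_{r+1-q_1})$ (your $M$ with $d=r$), and derive the contradiction $\Ext^{r}_{\La}(I(\mathbf{v}),-)\neq 0$ from Lemma \ref{lem:taun-inverse gives nonzero ext}. Your explicit telescoping identity $\tau_n^{-j}(P(\mathbf{v}))\isom\tau_{jn}^{-}(P(\mathbf{v}))$ is exactly what the paper carries out implicitly by iterating the one-step lemmas, so the two proofs differ only in presentation.
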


\begin{proof}
We write $k+q(\mathbf{v})=pn+r$ where $p\in \ZZ_{\geq 0}$ and $0\leq r \leq n-1$. We first claim that $p\geq 1$. Indeed, assume towards a contradiction that $p=0$. Then $1\leq k+q(\mathbf{v})=r$. Hence by Lemma \ref{lem:n-th cosyzygy and taun-inverse of P(v)}(b) we have that $\tau_{r}^{-}(P(\mathbf{v}))\isom I(\mathbf{v})$. By Lemma \ref{lem:taun-inverse gives nonzero ext} we obtain $\Ext_{\La}^{r}(I(\mathbf{v}),P(\mathbf{v}))\neq 0$. But this contradicts the fact that $\cC$ is an $n$-cluster tilting subcategory, since $I(\mathbf{v}),P(\mathbf{v})\in\cC$ and $1\leq r\leq n-1$.

Hence $p\geq 1$ and it remains to show that $r=0$. Assume towards a contradiction that $r\geq 1$. Then 
\[
1\leq n \leq pn = k+q(\mathbf{v})-r < k+q(\mathbf{v}).
\]
Hence we can apply Lemma \ref{lem:n-th cosyzygy and taun-inverse of P(v)}(b) to obtain that $\tau_n^{-}(P(\mathbf{v}))\isom S(v_{k-n+q_k})$. Then we can apply Lemma \ref{lem:n-th cosyzygy and taun-inverse of simples in middle of k-flow paths}(b) repeatedly  $p-1$ more times to obtain
\[
\tau_n^{-p}(P(\mathbf{v}))\isom \tau_n^{-(p-1)}(S(v_{k-n+q_k})) \isom \tau_n^{-(p-2)}(S(v_{k-2n+q_k}))\isom\cdots \isom S(v_{k-pn+q_k}) = S(v_{r+1-q_1}).
\]
By Proposition \ref{prop:n-ct is closed under n-AR translations and uniqueness of n-ct}(a) and since $P(\mathbf{v})\in\cC$, it follows that $S(v_{r+1-q_1})\in\cC$. By Lemma \ref{lem:n-th cosyzygy and taun-inverse of simples in middle of k-flow paths}(b) we have $\tau_{r}^{-}(S(v_{r+1-q_1}))\isom I(\mathbf{v})$. By Lemma \ref{lem:taun-inverse gives nonzero ext} we obtain $\Ext_{\La}^{r}(I(\mathbf{v}),S(v_{r+1-q_1}))\neq 0$. But this contradicts the fact that $\cC$ is an $n$-cluster tilting subcategory, since $I(\mathbf{v}),S(v_{r+1-q_1})\in\cC$ and $1\leq r\leq n-1$.
\end{proof}

\begin{corollary}\label{cor:j-th taun-inverse of P(v)}
Let $\mathbf{v}$ be a $k$-flow path in $Q$. Assume that $k+q(\mathbf{v})=pn$ for some $p\geq 1$ and let $j\in\ZZ$ with $0\leq j\leq p$. Then
\begin{equation}\label{eq:j-th taun-inverse of P(v)}
\tau_n^{-j}(P(\mathbf{v})) \isom \begin{cases} P(\mathbf{v}), &\mbox{if $j=0$,} \\ S(v_{k-jn+q_k}), &\mbox{if $1\leq j\leq p-1$,} \\ I(\mathbf{v}), &\mbox{if $j=p$.}\end{cases}
\end{equation}
Moreover, if $1\leq j\leq p-1$, then $\delta(v_{k-jn+q_k})=(1,1)$. In particular, the module $\tau_n^{-j}(P(\mathbf{v}))$ is indecomposable and not projective-injective.
\end{corollary}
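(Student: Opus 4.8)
The plan is to read the statement off from the two computational lemmas of this subsection by iterating $\tau_n^{-}$; this is essentially the computation already performed inside the proof of Proposition \ref{prop:n divides k+q(v)}. The case $j=0$ is immediate, since $\tau_n^{-0}$ is the identity. For $j\geq 1$ I would argue by induction on $j$, treating the first step separately because it starts from a projective rather than a simple module.

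For the base step I would apply Lemma \ref{lem:n-th cosyzygy and taun-inverse of P(v)}(b) with $x=n$. Since $p\geq 1$ we have $1\leq n\leq pn=k+q(\mathbf{v})$, so the lemma gives $\tau_n^{-}(P(\mathbf{v}))\isom S(v_{k-n+q_k})$ when $p\geq 2$ (so that $n<k+q(\mathbf{v})$) and $\tau_n^{-}(P(\mathbf{v}))\isom I(\mathbf{v})$ when $p=1$ (so that $n=k+q(\mathbf{v})$); the latter already yields the case $j=p=1$. For the inductive step, needed only when $p\geq 2$, assuming $\tau_n^{-(j-1)}(P(\mathbf{v}))\isom S(v_{k-(j-1)n+q_k})$ for some $2\leq j\leq p$, I would apply Lemma \ref{lem:n-th cosyzygy and taun-inverse of simples in middle of k-flow paths}(b) with $s=k-(j-1)n+q_k$ and $x=n$. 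The only real work is to verify the hypotheses $1\leq s\leq k-1+q_k$ and $1\leq n\leq s-1+q_1$ of that lemma and to identify the correct branch of its conclusion. Using the identity $q(\mathbf{v})=q_1+q_k-1=pn-k$ one computes $s=(p-j+1)n+1-q_1$ and $s-1+q_1=(p-j+1)n$; since $p-j+1\geq 1$ both hypotheses hold, and $n\leq s-1+q_1$ is an equality exactly when $j=p$. Hence the lemma returns $S(v_{k-jn+q_k})$ for $j<p$ and $I(\mathbf{v})$ for $j=p$, closing the induction.

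For the ``moreover'' assertion I would show that the index $k-jn+q_k$ lies strictly inside the flow path whenever $1\leq j\leq p-1$. From $j\geq 1$ and $n\geq 2$ we get $jn-q_k\geq 1$, whence $k-jn+q_k\leq k-1<k$; from $j\leq p-1$ together with the identity above we get $k-jn+q_k\geq n+1-q_1\geq n\geq 2>1$. Thus $1<k-jn+q_k<k$, and by the definition of a flow path $\delta(v_{k-jn+q_k})=(1,1)$. Being simple, $S(v_{k-jn+q_k})$ is indecomposable, and since both $\delta^{-}$ and $\delta^{+}$ of this vertex equal $1$ the module is neither projective nor injective. The main obstacle throughout is the index arithmetic in the inductive step; once the two inequalities cutting out the domain of Lemma \ref{lem:n-th cosyzygy and taun-inverse of simples in middle of k-flow paths}(b) are checked, every isomorphism is handed to us by the preceding lemmas.
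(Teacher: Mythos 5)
Your proposal is correct and follows essentially the same route as the paper: the paper also handles $j=0$ trivially, gets $j=1$ from Lemma \ref{lem:n-th cosyzygy and taun-inverse of P(v)}(b), runs the induction through Lemma \ref{lem:n-th cosyzygy and taun-inverse of simples in middle of k-flow paths}(b), and proves the ``moreover'' part by the same chain of inequalities pinning $k-jn+q_k$ strictly between $1$ and $k$; you simply spell out the index arithmetic in the inductive step more explicitly than the paper does. The one thing you omit is the final assertion for the boundary cases $j=0$ and $j=p$: the paper reads ``indecomposable and not projective-injective'' as applying to all $0\leq j\leq p$ (this is what is actually used later, e.g.\ in Lemma \ref{lem:M(V) is basic}), and so it adds that $P(\mathbf{v})$ is not injective by Lemma \ref{lem:same P(v) implies same path} and $I(\mathbf{v})$ is not projective by its dual. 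These are one-line additions, but you should include them.
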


\begin{proof}
We first prove (\ref{eq:j-th taun-inverse of P(v)}). For $j=0$ the result is clear. For $1\leq j\leq p-1$ we use induction on $j$, where the base case $j=1$ follows from Lemma \ref{lem:n-th cosyzygy and taun-inverse of P(v)}(b), while the induction step follows from Lemma \ref{lem:n-th cosyzygy and taun-inverse of simples in middle of k-flow paths}(b).

Next, if $1\leq j\leq p-1$, then
\[
2=2+1-1\leq n+1-q_1 = k-(p-1)n+q_k\leq k-jn+q_k \leq k-n+q_{k}\leq k-2+1 = k-1,
\]
from which it follows that $\delta(v_{k-jn+q_k})=(1,1)$ and so $S(v_{k-jn+q_k})$ is neither projective nor injective.

Finally, if $j=0$, then $\tau_n^{-j}(P(\mathbf{v}))\isom P(\mathbf{v})$ is not injective by Lemma \ref{lem:same P(v) implies same path}, while if $j=p$, then $\tau_n^{-j}(P(\mathbf{v}))\isom I(\mathbf{v})$ is not projective by the dual of Lemma \ref{lem:same P(v) implies same path}.
\end{proof}

\section{Sufficient conditions}\label{sec:sufficient conditions}

Motivated by Proposition \ref{prop:the cases A_1 and A_m tilde} and Proposition \ref{prop:n divides k+q(v)} we give the following definition.

\begin{definition}\label{def:n-admissible}
Let $Q$ be an $n$-pre-admissible quiver. We say that $Q$ is
\emph{$n$-admissible} if one of the following conditions hold:
\begin{enumerate}[label=(\alph*)]
    \item $Q=\tilde{A}_m$ and $n\divides m$, or
    \item $Q\neq \tilde{A}_m$ and for every $k$-flow path $\mathbf{v}$ in $Q$ we have that $n\divides (k+q(\mathbf{v}))$.
\end{enumerate}
\end{definition}

\begin{example}\label{ex:n-admissible quivers}
\begin{enumerate}[label=(\alph*)]
    \item The quiver $A_m$ is $n$-admissible if and only if $n\divides (m-1)$. In particular, the quiver $A_1$ is $n$-admissible for all $n\geq 2$.
    \item The quiver of Example \ref{ex:n-pre-admissible quivers}(c) is $2$-admissible.
    \item The quiver of Example \ref{ex:n-pre-admissible quivers}(d) is $3$-admissible but not $n$-admissible for any $n\neq 3$.
\end{enumerate}
\end{example}

\begin{remark}\label{rem:n-admissible for everything that divides n}
\begin{enumerate}[label=(\alph*)]
    \item When studying $n$-admissible quivers, the cases $Q=A_1$ and $Q=\tilde{A}_m$ for $m\geq 1$ usually behave differently from the rest of the cases; the reason for this is that the quivers $A_1$ and $\tilde{A}_m$ are the only $n$-pre-admissible quivers that do not have flow paths as Lemma \ref{lem:the existence of flow paths} shows. Hence many times in the rest of this paper we will exclude one or both of the cases $Q=A_1$ and $Q=\tilde{A}_m$ from our statements. We remind the reader that this does not present a problem in our aim of classification of $n$-cluster tilting subcategories for radical square zero bound quiver algebras since such a classification in these exceptional cases is given in Proposition \ref{prop:the cases A_1 and A_m tilde}.
    
    \item If $Q$ is an $n$-admissible quiver and $n'$ is an integer such that $n'\geq 2$ and $n'\divides n$, then it follows directly from Remark \ref{rem:n-pre-admissible is n-pre-admissible for any n} and Definition \ref{def:n-admissible} that $Q$ is also an $n'$-admissible quiver.
\end{enumerate}
\end{remark}

By Proposition \ref{prop:the cases A_1 and A_m tilde} and Proposition \ref{prop:n divides k+q(v)} it follows that if $Q$ is a quiver and there exists an $n$-cluster tilting subcategory $\cC\subseteq \m(\K Q/\J^2)$, then $Q$ is $n$-admissible. The aim of this section is to show that the opposite is also true. We also want to show that if $Q\neq \tilde{A}_m$, then $\cC$ is unique and give a description of $\cC$.

For the rest of this section we fix an $n$-admissible quiver $Q$ with $Q\neq A_1$ and $Q\neq \tilde{A}_m$ and we set $\La\coloneqq \K Q/\J^2$. We denote by $\mathbf{V}$ the set of all flow paths in $Q$. Note that by Lemma \ref{lem:the existence of flow paths} we have that $\mathbf{V}\neq \varnothing$. For a $k$-flow path $\mathbf{v}\in\mathbf{V}$ we set $p(\mathbf{v})=\frac{k+q(\mathbf{v})}{n}$; since $Q$ is $n$-admissible, it follows that $p(\mathbf{v})$ is an integer. We define
\[
M(\mathbf{v}) \coloneqq \bigoplus_{j=0}^{p(\mathbf{v})} \tau_n^{-j}(P(\mathbf{v})) \isom P(\mathbf{v})\oplus \left(\bigoplus_{j=1}^{p(\mathbf{v})-1}S(v_{k-jn+q_k})\right)\oplus I(\mathbf{v}),
\]
where the last isomorphism follows from Corollary \ref{cor:j-th taun-inverse of P(v)}. We also set $M(\mathbf{V})\coloneqq \bigoplus_{\mathbf{v}\in\mathbf{V}}M(\mathbf{v})$. With this notation we have the following lemmas.

\begin{lemma}\label{lem:M(V) is basic}
\begin{enumerate}[label=(\alph*)]
    \item The module $M(\mathbf{v})$ is basic and has no projective-injective direct summand.
    \item The module $M(\mathbf{V})$ is basic and has no projective-injective direct summand.
\end{enumerate}
\end{lemma}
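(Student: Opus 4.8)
The plan is to prove both parts by sorting the indecomposable direct summands occurring in $M(\mathbf{v})$, and then in $M(\mathbf{V})$, into three mutually exclusive types and checking pairwise non-isomorphism within and across types. By the Krull--Schmidt theorem it suffices to control the indecomposable summands; by the explicit decomposition recorded after Corollary~\ref{cor:j-th taun-inverse of P(v)}, those of $M(\mathbf{v})$ are the projective $P(\mathbf{v})$, the simples $S(v_{k-jn+q_k})$ for $1\leq j\leq p(\mathbf{v})-1$, and the injective $I(\mathbf{v})$. Corollary~\ref{cor:j-th taun-inverse of P(v)} gives that each of these middle simples has degree $(1,1)$, hence is neither projective nor injective; $P(\mathbf{v})$ is projective by construction and not injective by Lemma~\ref{lem:same P(v) implies same path}; and dually $I(\mathbf{v})$ is injective and not projective. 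This already yields the ``no projective-injective summand'' assertion of both parts, and it partitions the summands into the three types (projective non-injective, injective non-projective, neither), which can therefore never be isomorphic across types.

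For part (a) it then remains to establish non-isomorphism within each type. The projective and injective types each contribute a single summand, so the only work is to separate the middle simples. Here I would observe that for $1\leq j\leq p(\mathbf{v})-1$ the indices $s_j=k-jn+q_k$ are distinct and, by Corollary~\ref{cor:j-th taun-inverse of P(v)}, satisfy $2\leq s_j\leq k-1$. Lemma~\ref{lem:same vertices in k-flow path}(a) then forces $v_{s_j}=v_{s_{j'}}$ if and only if $s_j=s_{j'}$, that is, if and only if $j=j'$, so the simples $S(v_{s_j})$ are pairwise non-isomorphic. Combined with the type separation above, this shows $M(\mathbf{v})$ is basic.

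For part (b) the ``no projective-injective summand'' claim is immediate from part (a) together with Krull--Schmidt, since every indecomposable summand of $M(\mathbf{V})$ is an indecomposable summand of some $M(\mathbf{v})$. For basicness I again use the three-type partition, which reduces the problem to three within-type statements ranging over all flow paths: the projectives $P(\mathbf{v})$, $\mathbf{v}\in\mathbf{V}$, are pairwise non-isomorphic by Lemma~\ref{lem:same P(v) implies same path}; the injectives $I(\mathbf{v})$ are pairwise non-isomorphic by its dual; and the middle simples coming from different flow paths are pairwise non-isomorphic. The last point is where the real content lies: a middle simple corresponds to a vertex of degree $(1,1)$, and Lemma~\ref{lem:same flow paths}(a) asserts that such a vertex lies on a unique flow path. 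Thus if a middle simple of $\mathbf{v}$ coincides with a middle simple of $\mathbf{u}$, say $v_s=u_t$ with $\delta(v_s)=(1,1)$, then $\mathbf{v}=\mathbf{u}$, and the question collapses back to the within-path distinctness already proved in (a). I expect this cross-path identification of the degree-$(1,1)$ simples to be the main obstacle, and it is precisely what Lemma~\ref{lem:same flow paths}(a) is designed to handle; the projective and injective comparisons are comparatively routine once Lemma~\ref{lem:same P(v) implies same path} and its dual are available.
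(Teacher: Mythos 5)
Your proof is correct. For part (a) it coincides with the paper's (the paper simply cites Corollary \ref{cor:j-th taun-inverse of P(v)} and Lemma \ref{lem:same vertices in k-flow path}(a), which is exactly the distinctness-of-indices argument you spell out). For part (b) your route is genuinely different. The paper takes two isomorphic indecomposable summands $\tau_n^{-j_v}(P(\mathbf{v}))\isom\tau_n^{-j_u}(P(\mathbf{u}))$ and applies $\tau_n^{-(p(\mathbf{v})-j_v)}$ to both, pushing them to the injective end of their respective orbits; a counting argument with Corollary \ref{cor:j-th taun-inverse of P(v)} forces $p(\mathbf{v})-j_v+j_u=p(\mathbf{u})$ and hence $I(\mathbf{v})\isom I(\mathbf{u})$, after which the dual of Lemma \ref{lem:same P(v) implies same path} gives $\mathbf{v}=\mathbf{u}$. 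You instead partition the summands by type and treat each type separately, with the only nontrivial case -- two coinciding degree-$(1,1)$ middle simples -- handled by Lemma \ref{lem:same flow paths}(a). Your argument is more elementary and arguably more transparent: it avoids the $\tau_n^{-}$ bookkeeping entirely and funnels every cross-path coincidence through one of the three uniqueness statements (Lemma \ref{lem:same P(v) implies same path}, its dual, and Lemma \ref{lem:same flow paths}(a)). What the paper's argument buys in exchange is uniformity: it needs no case split on whether the coinciding summands are projective, injective or simple, since the $\tau_n^{-}$-orbit structure handles all cases at once. Both proofs rest on the same underlying facts, and yours is complete as written.
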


\begin{proof}
\begin{enumerate}[label=(\alph*)]
    \item Follows immediately by Corollary \ref{cor:j-th taun-inverse of P(v)} and Lemma \ref{lem:same vertices in k-flow path}(a).
    
    \item By (a) we have that $M(\mathbf{V})$ has no projective-injective direct summand. It remains to show that $M(\mathbf{V})$ is basic. Since the module $M(\mathbf{v})$ for $\mathbf{v}\in\mathbf{V}$ is basic by (a), it is enough to show that if $\mathbf{v}$ and $\mathbf{u}$ are two flow paths in $Q$ with $\mathbf{v}\neq \mathbf{u}$, then $M(\mathbf{v})$ and $M(\mathbf{u})$ have no isomorphic direct summands. Assume towards a contradiction that there exist indecomposable direct summands $V$ of $M(\mathbf{v})$ and $U$ of $M(\mathbf{u})$ such that $V\isom U$ but $\mathbf{v}\neq \mathbf{u}$. Then $V\isom \tau_n^{-j_v}(P(\mathbf{v}))$ and $U\isom\tau_n^{-j_u}(P(\mathbf{u}))$ for some $j_v,j_u\in\ZZ_{\geq 0}$ with $j_v\leq p(\mathbf{v})$ and $j_u\leq p(\mathbf{u})$. Without loss of generality we assume that $j_u\geq j_v$. It follows that 
    \[
    \tau_n^{-(p(\mathbf{v})-j_v+j_u)}(P(\mathbf{u})) =
    \tau_n^{-(p(\mathbf{v})-j_v)}\tau_n^{-j_u}(P(\mathbf{u})) \isom \tau_n^{-(p(\mathbf{v})-j_v)}\tau_n^{-j_v}(P(\mathbf{v})) =
    \tau_n^{-p(\mathbf{v})}(P(\mathbf{v}))\isom I(\mathbf{v}),
    \]
    where the last isomorphism follows from Corollary \ref{cor:j-th taun-inverse of P(v)}. In particular, we have that the module $\tau_n^{-(p(\mathbf{v})-j_v+j_u)}(P(\mathbf{u}))$ is injective and nonzero. By Corollary \ref{cor:j-th taun-inverse of P(v)} we have that $\tau_n^{-j'}(P(\mathbf{u}))=0$ for $j'>p(\mathbf{u})$ and $\tau_n^{-j'}(P(\mathbf{u}))$ is not injective for $j'<p(\mathbf{u})$. We conclude that $p(\mathbf{v})-j_v+j_u=p(\mathbf{u})$ and so $I(\mathbf{u})\isom\tau_n^{-p(\mathbf{u})}(P(\mathbf{u}))=\tau_n^{-(p(\mathbf{v})-j_{v}+j_{u})}(P(\mathbf{u}))\isom I(\mathbf{v})$. Then by the dual of Lemma \ref{lem:same P(v) implies same path} it follows that $\mathbf{v}=\mathbf{u}$, which contradicts our assumption $\mathbf{v}\neq \mathbf{u}$. \qedhere 
\end{enumerate}
\end{proof}

\begin{lemma}\label{lem:M(V) is n-rigid}
Let $i\in\{1,\ldots,n-1\}$. Then $\Ext^{i}_{\La}(M(\mathbf{V}),M(\mathbf{V}))=0$.
\end{lemma}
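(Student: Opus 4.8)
The plan is to reduce the vanishing of $\Ext^i_\La(M(\mathbf V),M(\mathbf V))$ for $1\le i\le n-1$ to a statement about stable homomorphism spaces between the explicit indecomposable summands of $M(\mathbf V)$, which by Corollary \ref{cor:j-th taun-inverse of P(v)} are of exactly three kinds: the flow\nobreakdash-path projectives $P(\mathbf v)$, the $(1,1)$\nobreakdash-simples $S(v_{k-jn+q_k})$, and the flow\nobreakdash-path injectives $I(\mathbf v)$. By additivity of $\Ext$ it suffices to treat indecomposable summands $X,Y$. First I would combine a cosyzygy dimension shift with the Auslander--Reiten duality recalled in Section \ref{sec:preliminaries and notation}: since injectives kill higher $\Ext$ in the second argument, $\Ext^i_\La(X,Y)\isom\Ext^1_\La(X,\Omega^{-(i-1)}(Y))$, and AR duality then gives
\[
\Ext^i_\La(X,Y)\isom D\underline{\Hom}_\La(\tau^-\Omega^{-(i-1)}(Y),X)=D\underline{\Hom}_\La(\tau_i^-(Y),X)
\]
for every $1\le i\le n-1$. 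Thus everything reduces to computing $\tau_i^-(Y)$ and deciding when a map from it to $X$ is stably zero.

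Next I would compute $\tau_i^-(Y)$. If $Y$ is injective (a summand $I(\mathbf v)$), then $\tau_i^-(Y)=0$ and the $\Ext$ vanishes immediately. Otherwise write $Y=\tau_n^{-b}(P(\mathbf u))$ with $0\le b\le p(\mathbf u)-1$, where $\mathbf u$ is a $k$-flow path; because $Q$ is $n$-admissible we have $k+q(\mathbf u)=p(\mathbf u)\,n\ge n>i$, so the thresholds appearing in Lemma \ref{lem:n-th cosyzygy and taun-inverse of P(v)}(b) and Lemma \ref{lem:n-th cosyzygy and taun-inverse of simples in middle of k-flow paths}(b) are never reached, and a short induction on the exponents gives that $\tau_i^-(Y)$ is a single simple $S(u_s)$ with $s=k-(bn+i)+q_k$. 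A useful by\nobreakdash-product of the position count is that $s$ lies in the range where $\delta^-(u_s)\ge 1$ (the only boundary possibilities being the endpoints of degree $(2,1)$ or $(1,2)$), so the vertex $u_s$ always has at least one incoming arrow.

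With $\tau_i^-(Y)\isom S(u_s)$ in hand I would finish by a case analysis on $X$. If $X$ is projective, then any homomorphism into it factors through that projective, so $\underline{\Hom}_\La(-,X)=0$. If $X=I(\mathbf v)$ is injective, a nonzero map $S(u_s)\to X$ would identify $S(u_s)$ with the simple socle of $X$; but then, using $\delta^-(u_s)\ge 1$, I embed $S(u_s)$ into $\soc P(z)=\rad P(z)$ for an arrow $z\longrightarrow u_s$ and extend this inclusion along the monomorphism $S(u_s)\hookrightarrow P(z)$ by injectivity of $X$, exhibiting the original map as a factorization through the projective $P(z)$, so again $\underline{\Hom}_\La(S(u_s),X)=0$. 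Finally, if $X$ is one of the $(1,1)$-simples, say $X=\tau_n^{-a}(P(\mathbf v))$ with $1\le a\le p(\mathbf v)-1$, then $\underline{\Hom}_\La(S(u_s),X)\neq 0$ forces $X\isom S(u_s)$; the underlying vertex has degree $(1,1)$ and lies on both $\mathbf u$ and $\mathbf v$, so Corollary \ref{cor:unique flow path}(a) gives $\mathbf u=\mathbf v$ and Lemma \ref{lem:same vertices in k-flow path}(a) matches the two positions, yielding $bn+i=an$, i.e.\ $i=(a-b)n$, which is impossible for $1\le i\le n-1$.

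I expect the injective case to be the main obstacle: unlike the projective and simple cases it is not settled by a degree or counting argument but requires the factorization\nobreakdash-through\nobreakdash-a\nobreakdash-projective trick, and it is the one place where the radical square zero hypothesis (so that $\soc P(z)=\rad P(z)$ contains $S(u_s)$) is used together with injectivity of $X$. The second substantive point is the position bookkeeping of the second paragraph, where $n$-admissibility is precisely what guarantees that $i$ can never be a multiple of $n$ in the allowed range, and hence that the simple $\tau_i^-(Y)$ is never equal to $X$.
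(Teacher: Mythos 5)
Your proposal is correct and follows essentially the same route as the paper's proof: reduce by additivity to indecomposable summands, convert $\Ext^i_{\La}(X,Y)$ into a stable Hom out of $\tau_i^{-}(Y)$ via dimension shift and Aus\-lan\-der--Rei\-ten duality, check that $\tau_i^{-}(Y)$ is always a single simple $S(u_s)$ because $n$-admissibility keeps $i$ strictly below every threshold in Lemmas \ref{lem:n-th cosyzygy and taun-inverse of P(v)} and \ref{lem:n-th cosyzygy and taun-inverse of simples in middle of k-flow paths}, and then split on whether the target is projective, a $(1,1)$-simple, or injective. The one place you genuinely streamline the argument is the injective case: where the paper runs two subcases on $\delta^{+}(u_1)$ and exhibits an explicit projective in each, you note that $\delta^{-}(u_s)\geq 1$ yields a monomorphism $S(u_s)\hookrightarrow P(z)$ along which any map to an injective extends, killing the whole stable Hom space at once.
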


\begin{proof}
    Let $\mathbf{v}$ be a $k$-flow path in $Q$ and let $\mathbf{u}$ be a $k'$-flow path in $Q$. By the definition of $M(\mathbf{V})$ and additivity of $\Ext_{\La}^{i}(-,-)$ it is enough to show that $\Ext^{i}_{\La}(M(\mathbf{u}),M(\mathbf{v}))=0$. By the definition of $M(\mathbf{u})$ and $M(\mathbf{v})$ and additivity of $\Ext^{i}_{\La}(-,-)$ it is enough to show that
    \begin{equation}\label{eq:no exts}
        \Ext^{i}_{\La}(\tau_n^{-x}(P(\mathbf{u})),\tau_n^{-y}(P(\mathbf{v})))=0
    \end{equation}
    for any $x\in\{0,1,\ldots,p(\mathbf{u})\}$ and $y\in\{0,1,\ldots,p(\mathbf{v})\}$. If $x=0$, then $\tau_n^{-x}(P(\mathbf{u}))=P(\mathbf{u})$ is projective and so (\ref{eq:no exts}) holds. If $y=p(\mathbf{v})$, then by Corollary \ref{cor:j-th taun-inverse of P(v)} we have that $\tau_n^{-p(\mathbf{v})}(P(\mathbf{v}))\isom I(\mathbf{v})$ is injective and so (\ref{eq:no exts}) holds again. Hence we may assume that $x>0$ and $y<p(\mathbf{v})$.
    
    Using dimension shift and the Aus\-lan\-der--Rei\-ten duality we compute
    \begin{align*}
        \Ext^{i}_{\La}(\tau_n^{-x}(P(\mathbf{u})),\tau_n^{-y}(P(\mathbf{v}))) &\isom \Ext^{1}_{\La}(\tau_n^{-x}(P(\mathbf{u})),\Omega^{-(i-1)}\tau_n^{-y}(P(\mathbf{v}))) \\
        &\isom D\underline{\Hom}_{\La}(\tau^{-}\Omega^{-(i-1)}\tau_n^{-y}(P(\mathbf{v})), \tau_n^{-x}(P(\mathbf{u}))) \\
        &\isom D\underline{\Hom}_{\La}(\tau_i^{-}\tau_n^{-y}(P(\mathbf{v})), \tau_n^{-x}(P(\mathbf{u}))) \\
        &\isom D\underline{\Hom}_{\La}(S(v_{k-yn-i+q_k(\mathbf{v})}), \tau_n^{-x}(P(\mathbf{u}))),
    \end{align*}
    where the last isomorphism follows from Lemma \ref{lem:n-th cosyzygy and taun-inverse of P(v)}(b) if $y=0$ and by Corollary \ref{cor:j-th taun-inverse of P(v)} and Lemma \ref{lem:n-th cosyzygy and taun-inverse of simples in middle of k-flow paths}(b) if $y>0$. Hence it is enough to show that
    \begin{equation}\label{eq:no nonzero exts}
        D\underline{\Hom}_{\La}(S(v_{k-yn-i+q_k(\mathbf{v})}), \tau_n^{-x}(P(\mathbf{u}))) = 0.
    \end{equation}
    Assume towards a contradiction that (\ref{eq:no nonzero exts}) does not hold. We consider the cases $0<x<p(\mathbf{u})$ and $x=p(\mathbf{u})$ separately and reach a contradiction in each case.
    
    Case $0<x<p(\mathbf{u})$. In this case by Corollary \ref{cor:j-th taun-inverse of P(v)} we have that $\tau_n^{-x}(P(\mathbf{u}))\isom S(u_{k'-xn+q_{k'}(\mathbf{u})})$. Then it follows that $\Hom_{\La}(S(v_{k-yn-i+q_k(\mathbf{v})}),S(u_{k'-xn+q_{k'}(\mathbf{u})}))\neq 0$. Since both modules are simple, we conclude that $v_{k-yn-i+q_k(\mathbf{v})}=u_{k'-xn+q_{k'}(\mathbf{u})}$. By Corollary \ref{cor:j-th taun-inverse of P(v)} and since $0<x<p(\mathbf{u})$, it follows that $\delta(u_{k'-xn+q_{k'}(\mathbf{u})})=(1,1)$. Thus by Lemma \ref{lem:same flow paths}(a) we obtain $\mathbf{v}=\mathbf{u}$. In particular, we have that $k=k'$ and $q_k(\mathbf{v})=q_{k'}(\mathbf{u})$ and so $v_{k-yn-i+q_k(\mathbf{v})}=v_{k-xn+q_k(\mathbf{v})}$. Hence by Lemma \ref{lem:same vertices in k-flow path}(a) it follows that $k-yn-i+q_k(\mathbf{v})=k-xn+q_k(\mathbf{v})$. Equivalently we get $(x-y)n=i$, which contradicts $1\leq i \leq n-1$.
    
    Case $x=p(\mathbf{u})$. In this case by Corollary \ref{cor:j-th taun-inverse of P(v)} we have that $\tau_n^{-x}(P(\mathbf{u}))\isom I(\mathbf{u})$. Since we assume that (\ref{eq:no nonzero exts}) does not hold, and since $I(\mathbf{u})$ is indecomposable and injective, it follows that $S(v_{k-yn-i+q_k(\mathbf{v})})\isom\soc(I(\mathbf{u}))$. We consider the subcases $\delta^{+}(u_{1})=1$ and $\delta^{+}(u_{1})=2$ separately.
    \begin{itemize}
        \item Subcase $\delta^{+}(u_{1})=1$. In this case we have $I(\mathbf{u})=I(u_1)$ by definition. Hence $v_{k-yn-i+q_k(\mathbf{v})}=u_1$ and so $\delta(v_{k-yn-i+q_k(\mathbf{v})})\neq (1,1)$. By the definition of a $k$-flow path we obtain that $k-yn-i+q_k(\mathbf{v})\in\{1,k\}$. We claim that $k-yn-i+q_k(\mathbf{v})=1$. Indeed, assume towards a contradiction that $k-yn-i+q_k(\mathbf{v})=k$. Since $0\leq y \leq p(\mathbf{v})-1$, $1\leq i\leq n-1$ and $0\leq q_k(\mathbf{v}) \leq 1$, it follows that $y=0$, $i=1$ and $q_k(\mathbf{v})=1$. But then $(1,2)=\delta(v_k)=\delta(v_{k-yn-i+q_k(\mathbf{v})})=\delta(u_1)$ contradicts the fact that $\delta^{+}(u_{1})=1$. 
        
        Hence we have $k-yn-i+q_k(\mathbf{v})=1$. Using this equality together with $k+q(\mathbf{v})=p(\mathbf{v})n$, we obtain that $(p(\mathbf{v})-y)n=i+q_1(\mathbf{v})$. Since $y<p(\mathbf{v})$ and $1\leq i \leq n-1$, it follows that $q_1(\mathbf{v})=1$. Hence we have $v_1=v_{k-yn-i+q_k(\mathbf{v})}=u_{1}$ and $\delta(v_1)=(2,1)$. Then any morphism from $S(v_{k-yn-i+q_k(\mathbf{v})})=S(v_1)=\begin{smallmatrix} v_1 \end{smallmatrix}$ to $\tau_n^{-x}(P(\mathbf{u}))\isom I(u_1)=I(v_1)=\begin{smallmatrix} v^{-2}\;\;v^{-3} \\ v_1 \end{smallmatrix}$ clearly factors through $P(v^{-2})=\begin{smallmatrix} \;\;\;v^{-2} \\ v_1 \end{smallmatrix}$. But this shows that (\ref{eq:no nonzero exts}) holds, which is a contradiction.
        
        \item Subcase $\delta^{+}(u_{1})=2$. In this case we have $I(\mathbf{u})=I(u^{-1})$ by definition. Hence $v_{k-yn-i+q_k(\mathbf{v})}=u^{-1}$. Then any morphism from $S(v_{k-yn-i+q_k(\mathbf{v})})=S(u^{-1})=\begin{smallmatrix} u^{-1} \end{smallmatrix}$ to $\tau_n^{-x}(P(\mathbf{u}))\isom I(u^{-1})=\begin{smallmatrix} u_1 \\ \;\;u^{-1} \end{smallmatrix}$ clearly factors through $P(u_1)= \begin{smallmatrix} u_1 \\ u^{-1}\;\;u_2\end{smallmatrix}$. But this shows that (\ref{eq:no nonzero exts}) holds, which is a contradiction. \qedhere
    \end{itemize}
\end{proof}

\begin{lemma}\label{lem:vertices with degree (2,2) have no ext-1s}
Let $v,u\in Q_0$ be such that $\delta(v)=\delta(u)=(2,2)$.
\begin{enumerate}[label=(\alph*)]
    \item We have $\Ext^{1}_{\La}(M(\mathbf{V}),P(v))=0$ and $\Ext^1_{\La}(I(v),M(\mathbf{V}))=0$.
    \item We have $\Ext^{1}_{\La}(I(u),P(v))=0$.
\end{enumerate}
\end{lemma}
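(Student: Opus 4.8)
The plan is to translate each $\Ext^{1}$-group into a stable-Hom space via the Auslander--Reiten duality and then read off the vanishing from the socle/top structure of the indecomposable summands involved. I would first note that the existence of a vertex of degree $(2,2)$ forces $n=2$ by Definition \ref{def:n-pre-admissible}(i), so $\Ext^{1}$ is the only extension group that needs to vanish. The key preliminary computation is that for a vertex $v$ with $\delta(v)=(2,2)$ one has $\tau^{-}(P(v))\isom S(v)$. Indeed, since $\delta^{+}(v)=2$, the dual of Lemma \ref{lem:cosyzygy and tau-inverse of simples}(c) gives $\tau(S(v))\isom P(v)$, and as $S(v)$ is non-projective (because $\delta^{+}(v)=2\neq 0$) we obtain $\tau^{-}(P(v))\isom\tau^{-}\tau(S(v))\isom S(v)$. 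Symmetrically, Lemma \ref{lem:cosyzygy and tau-inverse of simples}(c) itself yields $\tau(I(v))\isom S(v)$.

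For the first identity in (a), I would apply the Auslander--Reiten duality to get
\[
\Ext^{1}_{\La}(M(\mathbf{V}),P(v))\isom D\underline{\Hom}_{\La}(\tau^{-}(P(v)),M(\mathbf{V}))=D\underline{\Hom}_{\La}(S(v),M(\mathbf{V})),
\]
and then show $\underline{\Hom}_{\La}(S(v),M(\mathbf{V}))=0$ by inspecting the summands of $M(\mathbf{V})$ listed in Corollary \ref{cor:j-th taun-inverse of P(v)}. Any morphism from $S(v)$ into a projective summand $P(\mathbf{v})$ factors through the projective $P(\mathbf{v})$ itself, hence is zero in $\underline{\m}\La$; this is the delicate point, because when a flow path ends at the $(2,2)$-vertex $v$ the simple $S(v)$ really does occur in the socle of $P(\mathbf{v})=P(v^{+1})$, so the naive claim $\Hom=0$ is false here. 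For an injective summand $I(\mathbf{v})$ the socle is the simple at $v_{1}$ or $v^{-1}$, and neither vertex has degree $(2,2)$ (for $v^{-1}$ this uses Definition \ref{def:n-pre-admissible}(iii)), so $\Hom_{\La}(S(v),I(\mathbf{v}))=0$; for a simple summand $S(v_{s})$ with $\delta(v_{s})=(1,1)$ we have $v_{s}\neq v$. The second identity in (a) I would handle dually, through the dual form of the Auslander--Reiten duality, namely
\[
\Ext^{1}_{\La}(I(v),M(\mathbf{V}))\isom D\overline{\Hom}_{\La}(M(\mathbf{V}),\tau(I(v)))=D\overline{\Hom}_{\La}(M(\mathbf{V}),S(v)),
\]
where $\overline{\Hom}$ denotes the injectively stable morphisms: now the injective summands $I(\mathbf{v})$ vanish in the injectively stable category, while the tops of the projective summands $P(\mathbf{v})$ and of the simple summands $S(v_{s})$ are simples at vertices of indegree or outdegree $1$, hence different from $v$. (Alternatively, this identity follows by applying the first identity to $\La^{\op}$, whose quiver is again $n$-admissible.)

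For (b) I would once more use the Auslander--Reiten duality to write
\[
\Ext^{1}_{\La}(I(u),P(v))\isom D\underline{\Hom}_{\La}(\tau^{-}(P(v)),I(u))=D\underline{\Hom}_{\La}(S(v),I(u)).
\]
If $u\neq v$, then $\Hom_{\La}(S(v),I(u))=0$ since the socle of $I(u)$ is $S(u)$. If $u=v$, the space $\Hom_{\La}(S(v),I(v))$ is one-dimensional, spanned by the socle inclusion $\iota\colon S(v)\hookrightarrow I(v)$, and the task is to show $\iota$ is zero in $\underline{\m}\La$. I would exhibit a factorization through a projective: choosing an arrow $w\longrightarrow v$ (which exists as $\delta^{-}(v)=2$), Definition \ref{def:n-pre-admissible}(iii) gives $P(w)\isom\begin{smallmatrix} w \\ v\end{smallmatrix}$, and $\iota$ factors as $S(v)\isom\soc(P(w))\hookrightarrow P(w)\longrightarrow I(v)$, where the last map is the natural inclusion of $P(w)$ as the summand of $I(v)$ corresponding to $w$. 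Hence $\iota$ factors through the projective $P(w)$ and $\underline{\Hom}_{\La}(S(v),I(v))=0$.

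The main obstacle is precisely the bookkeeping in these exceptional overlaps, namely the case in (a) where $S(v)$ sits in the socle of some $P(\mathbf{v})$ and the case $u=v$ in (b). In both situations the extension class corresponds to a genuinely nonzero morphism of modules, and the vanishing holds only after passing to the stable category; thus the argument relies essentially on these morphisms factoring through projective (respectively injective) modules rather than on any naive Hom-vanishing.
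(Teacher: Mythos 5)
Your proof is correct and follows essentially the same route as the paper: the Aus\-lan\-der--Rei\-ten duality together with $\tau^{-}(P(v))\isom S(v)$ reduces everything to stable Hom-spaces out of $S(v)$, which are then checked summand by summand against Corollary \ref{cor:j-th taun-inverse of P(v)}, with the same factorization through $P(w_1)=\begin{smallmatrix}w_1\\ v\end{smallmatrix}$ in the case $u=v$ of (b). The only (harmless) variations are that you kill the injective summands $I(\mathbf{w})$ by noting their socle vertices $v_1$ or $v^{-1}$ cannot have degree $(2,2)$, where the paper instead invokes the dual of Lemma \ref{lem:indecomposable projectives noninjectives come from flow paths, are injective or have degree (2,2)}, and that you write out the dual half of (a) explicitly via injectively stable morphisms where the paper simply says it follows dually.
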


\begin{proof}
\begin{enumerate}[label=(\alph*)]
    \item We only show that $\Ext^1_{\La}(M(\mathbf{V}),P(v))=0$; the other equality follows dually. Let $\mathbf{w}$ be a $k$-flow path in $Q$. By additivity of $\Ext^{1}_{\La}(-,-)$ it is enough to show that
    \[
    \Ext^{1}_{\La}(\tau_n^{-x}(P(\mathbf{w})),P(v))=0
    \]
    for any $x\in\{0,1,\ldots,p(\mathbf{w})\}$. If $x=0$, then $\tau_n^{-x}(P(\mathbf{w}))=P(\mathbf{w})$ is projective and so the result follows. Otherwise, assume that $1\leq x\leq p(\mathbf{w})$. By the dual of Lemma \ref{lem:cosyzygy and tau-inverse of simples}(c) we have that $\tau^{-}(P(v))\isom S(v)$. Then by the Aus\-lan\-der--Rei\-ten duality, it is enough to show that
    \begin{equation}\label{eq:ext-1 for the case n=2}
        D\underline{\Hom}_{\La}(S(v), \tau_n^{-x}(P(\mathbf{w})))=0.
    \end{equation}
    We consider the cases $1\leq x\leq p(\mathbf{w})-1$ and $x=p(\mathbf{w})$ separately.
    
    Case $1\leq x\leq p(\mathbf{w})-1$. In this case by Corollary \ref{cor:j-th taun-inverse of P(v)} we have that $\tau_n^{-x}(P(\mathbf{w}))\isom S(w_{k-xn+q_k})$. Assume towards a contradiction that (\ref{eq:ext-1 for the case n=2}) does not hold. Then $S(v)\isom S(w_{k-xn+q_k})$ from which it follows that $v=w_{k-xn+q_k}$. By Corollary \ref{cor:j-th taun-inverse of P(v)} we have that $\delta(w_{k-xn+q_k})=(1,1)$, which contradicts $\delta(v)=(2,2)$.
    
    Case $x=p(\mathbf{w})$. In this case by Corollary \ref{cor:j-th taun-inverse of P(v)} we have that $\tau_n^{-x}(P(\mathbf{w}))\isom I(\mathbf{w})$. Assume towards a contradiction that (\ref{eq:ext-1 for the case n=2}) does not hold. Then $S(v)\isom \soc (I(\mathbf{w}))$ from which it follows that $I(v)\isom I(\mathbf{w})$. But this contradicts the dual of Lemma \ref{lem:indecomposable projectives noninjectives come from flow paths, are injective or have degree (2,2)} since $\delta(v)=(2,2)$.
    
    \item By the dual of Lemma \ref{lem:cosyzygy and tau-inverse of simples}(c) we have that $\tau^{-}(P(v))\isom S(v)$. Then by the Aus\-lan\-der--Rei\-ten duality it is enough to show that
    \[
    D\underline{\Hom}_{\La}(S(v),I(u))=0.
    \]
    If $v\neq u$, then $\Hom_{\La}(S(v),I(u))=0$ and the result follows. Otherwise, assume that $v=u$. Let $w_1\longrightarrow v$ and $w_2\longrightarrow v$ be the arrows ending at $v$. Then any morphism from $S(v)=\begin{smallmatrix}
    v
    \end{smallmatrix}$ to $I(u)=I(v)=\begin{smallmatrix}
    w_1\;\;w_2 \\ v
    \end{smallmatrix}$ clearly factors through $P(w_1)=\begin{smallmatrix}
    \;w_1 \\ v
    \end{smallmatrix}$, which shows that $D\underline{\Hom}_{\La}(S(v),I(u))=0$.\qedhere
\end{enumerate}
\end{proof}

Next, let $\{R_t\}_{t=1}^{f}$ be a complete collection of representatives of pairwise non-isomorphic projective-injective $\La$-modules. Set 
\begin{equation}\label{eq:definition of M}
M \coloneqq M(\mathbf{V}) \oplus \left(\bigoplus_{t=1}^{f}R_t\right)\oplus \left(\bigoplus_{\substack{v\in Q_0 \\ \delta(v)=(2,2)}} (P(v)\oplus I(v))\right).
\end{equation}
The main aim of this section is to show that $M$ is the unique $n$-cluster tilting module of $\La$. We start by giving an alternate description of $M$.

\begin{corollary}\label{cor:M is basic and tau_n inverse of Lambda}
The module $M$ is basic and $M\isom \bigoplus_{j\geq 0}\tau_n^{-j}(\La)$. In particular, we have that $D(\La)\in M$.
\end{corollary}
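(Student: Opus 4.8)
The plan is to compute $\bigoplus_{j\geq 0}\tau_n^{-j}(\La)$ explicitly and match it, summand by summand, against the definition of $M$ in \eqref{eq:definition of M}. First I would write $\La=\bigoplus_{v\in Q_0}P(v)$ and use additivity of $\tau_n^{-}$ to reduce the problem to computing $\bigoplus_{j\geq 0}\tau_n^{-j}(P(v))$ for each individual vertex $v$. By Lemma \ref{lem:indecomposable projectives noninjectives come from flow paths, are injective or have degree (2,2)} every $v\in Q_0$ falls into exactly one of three cases, and I would organize the computation accordingly.

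In the first case $P(v)$ is injective; since $n\geq 2$ and the cosyzygy of an injective vanishes, $\tau_n^{-j}(P(v))=0$ for all $j\geq 1$, so $v$ contributes the single projective-injective module $P(v)$, and ranging over all such $v$ produces exactly $\bigoplus_{t=1}^{f}R_t$. In the second case $\delta(v)=(2,2)$, which forces $n=2$ by Definition \ref{def:n-pre-admissible}(i); here Corollary \ref{cor:taun-inverse of P(v) for degree (2,2)} gives $\tau_2^{-}(P(v))\isom I(v)$, and since $I(v)$ is injective we have $\tau_2^{-j}(P(v))=0$ for $j\geq 2$, so $v$ contributes $P(v)\oplus I(v)$, matching the last summand of \eqref{eq:definition of M}. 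In the third case $P(v)=P(\mathbf{v})$ for a flow path $\mathbf{v}$, and Corollary \ref{cor:j-th taun-inverse of P(v)} (using that $Q$ is $n$-admissible, so $k+q(\mathbf{v})=p(\mathbf{v})n$), together with the vanishing $\tau_n^{-j}(P(\mathbf{v}))=0$ for $j>p(\mathbf{v})$, shows that $v$ contributes precisely $M(\mathbf{v})$. The one point requiring care is that this sets up a genuine bijection between the third-case vertices and the flow paths in $\mathbf{V}$: every flow path $\mathbf{v}$ gives $P(\mathbf{v})=P(w)$ for a unique vertex $w$, necessarily in the third case, and conversely Lemma \ref{lem:same P(v) implies same path} guarantees that distinct flow paths yield non-isomorphic $P(\mathbf{v})$. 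Hence the third-case contributions sum to $M(\mathbf{V})=\bigoplus_{\mathbf{v}\in\mathbf{V}}M(\mathbf{v})$, and collecting the three cases yields the isomorphism $M\isom\bigoplus_{j\geq 0}\tau_n^{-j}(\La)$.

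For basicness I would regard $M$ as the sum of the three blocks $M(\mathbf{V})$, $\bigoplus_{t}R_t$, and $N\coloneqq\bigoplus_{\delta(v)=(2,2)}(P(v)\oplus I(v))$. Lemma \ref{lem:M(V) is basic} handles $M(\mathbf{V})$ and records that it has no projective-injective summand; the $R_t$ are pairwise non-isomorphic by construction; and the summands of $N$ are pairwise non-isomorphic and not projective-injective, since when $\delta(v)=(2,2)$ the module $P(v)$ has non-simple socle (so is non-injective) and $I(v)$ has non-simple top (so is non-projective), and $P(v)\not\isom I(v')$ by comparing tops. It then remains to rule out shared indecomposable summands between distinct blocks. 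The block $\bigoplus_{t}R_t$ is disjoint from the other two because it consists of projective-injectives while they contain none. For $M(\mathbf{V})$ against $N$ I would use the mutual exclusivity of the three cases of Lemma \ref{lem:indecomposable projectives noninjectives come from flow paths, are injective or have degree (2,2)} and its dual: the projective summands $P(\mathbf{v})$ and injective summands $I(\mathbf{v})$ of $M(\mathbf{V})$ come from the third case and so differ from the second-case modules $P(v),I(v)$ with $\delta(v)=(2,2)$, whereas the remaining summands of $M(\mathbf{V})$ are one-dimensional simples and cannot equal the three-dimensional modules of $N$.

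Finally, for $D(\La)\in M$ I would invoke the dual of Lemma \ref{lem:indecomposable projectives noninjectives come from flow paths, are injective or have degree (2,2)}: each indecomposable injective $I(v)$ is either projective-injective (hence some $R_t$), or has $\delta(v)=(2,2)$ (hence a summand of $N$), or equals $I(\mathbf{v})\isom\tau_n^{-p(\mathbf{v})}(P(\mathbf{v}))$ for a flow path $\mathbf{v}$ (hence a summand of $M(\mathbf{v})$). In every case $I(v)$ is a direct summand of $M$, so $D(\La)=\bigoplus_{v\in Q_0}I(v)$ lies in $\add(M)$. The computation is essentially routine bookkeeping built on the earlier corollaries; the step I would be most careful about, and the only genuine subtlety, is the identification of the third-case contributions with $M(\mathbf{V})$, i.e. the bijection between flow paths and third-case vertices, which is exactly where Lemma \ref{lem:same P(v) implies same path} is needed.
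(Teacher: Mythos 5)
Your proposal is correct and follows essentially the same route as the paper: decompose $\La$ into indecomposable projectives via the trichotomy of Lemma \ref{lem:indecomposable projectives noninjectives come from flow paths, are injective or have degree (2,2)}, compute $\bigoplus_{j\geq 0}\tau_n^{-j}(P(v))$ in each case using Corollary \ref{cor:taun-inverse of P(v) for degree (2,2)} and Corollary \ref{cor:j-th taun-inverse of P(v)}, check basicness block by block, and get $D(\La)\in\add(M)$ from the dual trichotomy. The only difference is that you make explicit the bijection between flow paths and third-case vertices via Lemma \ref{lem:same P(v) implies same path}, which the paper leaves implicit in its decomposition of $\La$.
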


\begin{proof}
We set 
\[
R\coloneqq \bigoplus_{t=1}^{f}R_t, \text{ and } M_{(2,2)}\coloneqq \bigoplus_{\substack{v\in Q_0 \\ \delta(v)=(2,2)}} (P(v)\oplus I(v)).
\]
By Lemma \ref{lem:indecomposable projectives noninjectives come from flow paths, are injective or have degree (2,2)} we have that
\[
\La \isom \left(\bigoplus_{\mathbf{v}\in\mathbf{V}}P(\mathbf{v})\right)\oplus R \oplus \left(\bigoplus_{\substack{v\in Q_0 \\ \delta(v)=(2,2)}} P(v)\right).
\]
Then by (\ref{eq:definition of M}) and Corollary \ref{cor:taun-inverse of P(v) for degree (2,2)} it follows that $M\isom \bigoplus_{j\geq 0}\tau_n^{-j}(\La)$. 

To see that $M$ is basic, we have that $M(\mathbf{V})$ is basic by Lemma \ref{lem:M(V) is basic}(b), that $R$ is basic by definition and that $M_{(2,2)}$ is basic since $P(v)$ is never injective if $\delta^{+}(v)=2$. By Corollary \ref{cor:j-th taun-inverse of P(v)} and by Lemma \ref{lem:indecomposable projectives noninjectives come from flow paths, are injective or have degree (2,2)} and its dual and by comparing direct summands of $M(\mathbf{V})$, $R$ and $M_{(2,2)}$, it easily follows that $M$ is basic.

Finally, we show that $D(\La)\in\add(M)$. It is enough to show that for every vertex $v\in Q_0$, the indecomposable injective $\La$-module $I(v)$ corresponding to the vertex $v\in Q_0$ belongs to $\add(M)$. If $\delta(v)=(2,2)$ or $I(v)$ is projective, then clearly $I(v)\in\add(M)$ by the definition of $M$. Otherwise, by the dual of Lemma \ref{lem:indecomposable projectives noninjectives come from flow paths, are injective or have degree (2,2)} it follows that $I(v)\isom I(\mathbf{v})$ for some flow path $\mathbf{v}$ in $Q$. Then by Corollary \ref{cor:taun-inverse of P(v) for degree (2,2)} and Proposition \ref{prop:n-ct is closed under n-AR translations and uniqueness of n-ct}(a) we have
\[
I(v) \isom \tau_n^{-p(\mathbf{v})}(P(\mathbf{v})) \in \add(M),
\]
as required.
\end{proof}

Next we want to show that $M$ is $n$-rigid.

\begin{proposition}\label{prop:M is n-rigid}
Let $i\in\{1,\ldots,n-1\}$. Then $\Ext^{i}_{\La}(M,M)=0$.
\end{proposition}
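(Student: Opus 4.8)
The plan is to exploit the additivity of $\Ext^{i}_{\La}(-,-)$ in both arguments together with the three-part decomposition $M \isom M(\mathbf{V}) \oplus R \oplus M_{(2,2)}$, where I write $R = \bigoplus_{t=1}^{f} R_t$ and $M_{(2,2)} = \bigoplus_{\delta(v)=(2,2)}(P(v)\oplus I(v))$ as in the proof of Corollary \ref{cor:M is basic and tau_n inverse of Lambda}. It then suffices to verify that $\Ext^{i}_{\La}(X,Y)=0$ for every $i\in\{1,\ldots,n-1\}$ and every one of the nine combinations of $X,Y$ chosen among these three summands. I would organise the argument exactly along these nine cases.

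Several of the combinations will vanish for formal reasons, and I would dispose of these first. Whenever $R$ occurs in the first argument the group vanishes because each $R_t$ is projective, and whenever $R$ occurs in the second argument it vanishes because each $R_t$ is injective; this handles every pair involving $R$. Likewise, within $M_{(2,2)}$ and between $M_{(2,2)}$ and $M(\mathbf{V})$, any $\Ext^{i}$ having a summand $P(v)$ with $\delta(v)=(2,2)$ in the first argument vanishes by projectivity, and any $\Ext^{i}$ having a summand $I(v)$ in the second argument vanishes by injectivity. Finally, the pair $\Ext^{i}_{\La}(M(\mathbf{V}),M(\mathbf{V}))$ is precisely Lemma \ref{lem:M(V) is n-rigid}.

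After these reductions only three families of groups remain, namely $\Ext^{i}_{\La}(M(\mathbf{V}),P(v))$, $\Ext^{i}_{\La}(I(v),M(\mathbf{V}))$ and $\Ext^{i}_{\La}(I(u),P(v))$, in each case with $\delta(v)=\delta(u)=(2,2)$. The key observation I would invoke is that the existence of a single vertex of degree $(2,2)$ already forces $n=2$ by Definition \ref{def:n-pre-admissible}(i) (since $(2,2)$ belongs to $E$ only when $n=2$); hence whenever $M_{(2,2)}\neq 0$ the index set $\{1,\ldots,n-1\}$ collapses to $\{1\}$, and it is enough to treat $i=1$ for these three families. These three vanishing statements at $i=1$ are exactly Lemma \ref{lem:vertices with degree (2,2) have no ext-1s}(a) and (b), which completes the argument.

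In this sense the proof is essentially a bookkeeping argument: the genuine content has already been isolated in Lemma \ref{lem:M(V) is n-rigid} (the $n$-rigidity of the flow-path part, obtained via dimension shift and Auslander--Reiten duality) and in Lemma \ref{lem:vertices with degree (2,2) have no ext-1s} (the interaction of degree-$(2,2)$ modules with everything else). The one point that genuinely requires care, and which I regard as the crux of the write-up, is the reduction to $i=1$ for the degree-$(2,2)$ contributions: without explicitly recording that $n=2$ in that situation, one would appear to need higher $\Ext$-vanishing statements that are neither available nor true in general.
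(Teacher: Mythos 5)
Your proof is correct and follows essentially the same route as the paper: decompose $M$ into the flow-path part, the projective-injective part and the degree-$(2,2)$ part, invoke Lemma \ref{lem:M(V) is n-rigid} and Lemma \ref{lem:vertices with degree (2,2) have no ext-1s}, and use projectivity/injectivity to kill the remaining combinations. Your explicit remark that a degree-$(2,2)$ vertex forces $n=2$, so that only $i=1$ needs to be checked for those contributions, is exactly the point the paper records just before Corollary \ref{cor:taun-inverse of P(v) for degree (2,2)} and leaves implicit in its own proof of this proposition.
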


\begin{proof}
By Lemma \ref{lem:M(V) is n-rigid} and since $R_t$ is projective-injective for every $t\in\{1,\ldots,f\}$, it follows that the module $M(\mathbf{V}) \oplus \left(\bigoplus_{t=1}^{f}R_t\right)$ is $n$-rigid. Hence if there exists no vertex $v\in Q_0$ with degree $\delta(v)=(2,2)$, the result follows immediately, while if there exists a vertex $v\in Q_0$ with degree $\delta(v)=(2,2)$, the result follows from Lemma \ref{lem:vertices with degree (2,2) have no ext-1s}.
\end{proof}

We are now ready to show that $M$ is $n$-cluster tilting.

\begin{proposition}\label{prop:n-admissible gives n-cluster tilting}
The module $M$ is an $n$-cluster tilting $\La$-module and any basic $n$-cluster tilting $\La$-module is isomorphic to $M$.
\end{proposition}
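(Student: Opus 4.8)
The plan is to verify directly that $\add(M)$ satisfies the two defining equalities of an $n$-cluster tilting subcategory and then to read off uniqueness from Proposition~\ref{prop:n-ct is closed under n-AR translations and uniqueness of n-ct}(c). Since $\La$ is representation-finite by Theorem~\ref{thrm:representation-finite string algebra}(c), the subcategory $\add(M)$ is automatically functorially finite, and Proposition~\ref{prop:M is n-rigid} gives $\Ext_\La^i(M,M)=0$ for $0<i<n$; hence $\add(M)\subseteq\{X:\Ext_\La^i(X,M)=0 \text{ for } 0<i<n\}$ and likewise for the other side. The whole task therefore reduces to the two reverse inclusions, and by Corollary~\ref{cor:M is basic and tau_n inverse of Lambda} we already know $\La,D(\La)\in\add(M)$.

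For the inclusion $\{X:\Ext_\La^i(X,M)=0 \text{ for } 0<i<n\}\subseteq\add(M)$ I would argue by contraposition, producing for every indecomposable $X\notin\add(M)$ some $0<r<n$ with $\Ext_\La^r(X,M)\neq 0$. First, by Theorem~\ref{thrm:representation-finite string algebra}(d) every non-simple indecomposable is projective or injective, hence lies in $\add(M)$; so $X$ must be a simple module $S(v)$ that is neither projective nor injective, i.e. $\delta(v)\in\{(1,1),(1,2),(2,1),(2,2)\}$, and moreover $S(v)$ is not one of the simple summands $S(v_{k-jn+q_k})$ of $M(\mathbf{V})$ listed in Corollary~\ref{cor:j-th taun-inverse of P(v)}. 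The strategy for each such ``missing'' simple is to realise it as $S(v)\isom\tau_r^{-}(N)$ for a direct summand $N$ of $M$ and some $0<r<n$; then Lemma~\ref{lem:taun-inverse gives nonzero ext} yields $\Ext_\La^r(S(v),N)\neq 0$, whence $\Ext_\La^r(S(v),M)\neq 0$.

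Concretely, when $\delta(v)\neq(2,2)$ the vertex $v$ lies on a unique flow path $\mathbf{v}$ by Corollary~\ref{cor:unique flow path} (or its dual): as an interior vertex if $\delta(v)=(1,1)$, as $v_k$ with $q_k=1$ if $\delta(v)=(1,2)$, and as $v_1$ with $q_1=1$ if $\delta(v)=(2,1)$. In each case Lemma~\ref{lem:n-th cosyzygy and taun-inverse of P(v)}(b) identifies $S(v)\isom\tau_j^{-}(P(\mathbf{v}))$ for some $1\leq j\leq p(\mathbf{v})n-1$, and the assumption that $S(v)$ is not a summand of $M(\mathbf{V})$ forces $n\nmid j$. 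Writing $j=qn+r$ with $0\leq q\leq p(\mathbf{v})-1$ and $1\leq r\leq n-1$, I would set $N\coloneqq\tau_n^{-q}(P(\mathbf{v}))$, which is a summand of $M$ by construction, and check using Corollary~\ref{cor:j-th taun-inverse of P(v)} together with Lemma~\ref{lem:n-th cosyzygy and taun-inverse of simples in middle of k-flow paths}(b) that $\tau_r^{-}(N)\isom\tau_j^{-}(P(\mathbf{v}))\isom S(v)$. Carrying out this index bookkeeping, and verifying in particular that the endpoint cases $(1,2)$ and $(2,1)$ land at $j=1$ and $j=p(\mathbf{v})n-1$ respectively, is the technical heart and main obstacle of the argument. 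The remaining case $\delta(v)=(2,2)$, which forces $n=2$, is handled separately: the dual of Lemma~\ref{lem:cosyzygy and tau-inverse of simples}(c) gives $\tau^{-}(P(v))\isom S(v)$ with $P(v)\in\add(M)$, so $\Ext_\La^1(S(v),P(v))\neq 0$. This settles the first reverse inclusion.

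Finally, the second equality $\{X:\Ext_\La^i(M,X)=0 \text{ for } 0<i<n\}=\add(M)$ follows by a dual argument. The class of $n$-admissible quivers with $Q\neq A_1,\tilde{A}_m$ is stable under $Q\mapsto Q^{\op}$ (all degrees and the quantities $q_1,q_k$ are interchanged, so $q(\mathbf{v})$ is preserved), the module constructed for $\La^{\op}=\K Q^{\op}/\J^2$ is $D(M)$, and the isomorphism $\Ext_\La^i(M,X)\isom\Ext_{\La^{\op}}^i(D(X),D(M))$ transports the inclusion just proved over $\La^{\op}$ to the desired inclusion over $\La$; alternatively one repeats the computation above with $\tau_r$, $\Omega$ and the injective summands in place of $\tau_r^{-}$, $\Omega^{-}$ and the projective summands, invoking the dual of Lemma~\ref{lem:taun-inverse gives nonzero ext}. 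Together with $n$-rigidity and functorial finiteness this shows that $M$ is an $n$-cluster tilting module. For uniqueness, Corollary~\ref{cor:M is basic and tau_n inverse of Lambda} gives $M\isom\bigoplus_{j\geq 0}\tau_n^{-j}(\La)$, so Proposition~\ref{prop:n-ct is closed under n-AR translations and uniqueness of n-ct}(c) forces every $n$-cluster tilting subcategory to equal $\add(M)$; in particular every basic $n$-cluster tilting $\La$-module is isomorphic to $M$.
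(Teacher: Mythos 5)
Your proposal is correct and follows essentially the same route as the paper: $n$-rigidity gives one inclusion, the contrapositive reduces (via Theorem \ref{thrm:representation-finite string algebra}(d) and Corollary \ref{cor:M is basic and tau_n inverse of Lambda}) to non-projective, non-injective simple modules, the $\tau_n^{-}$-orbit computations combined with Lemma \ref{lem:taun-inverse gives nonzero ext} produce the required nonzero extension, and uniqueness comes from Proposition \ref{prop:n-ct is closed under n-AR translations and uniqueness of n-ct}(c). The only difference is a mirror image: you prove the $\Ext^{i}_{\La}(X,M)$-equality directly by realising $S(v)\isom\tau_r^{-}(N)$ for a summand $N$ of $M$, whereas the paper proves the $\Ext^{i}_{\La}(M,X)$-equality by computing $\tau_i^{-}(S(v))$ and identifying it with a summand of $M$ (treating the case $\delta^{-}(v)=2$, hence both $(2,1)$ and $(2,2)$, uniformly via $\tau^{-}(S(v))\isom I(v)$); the index bookkeeping you flag as the main obstacle does go through with the lemmas you cite.
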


\begin{proof}
To show that $M$ is an $n$-cluster tilting module we need to show that
\begin{align*}
    \add(M) &= \{X\in\m\La \mid \Ext^{i}_{\La}(M,X)=0\text{ for all $0 < i < n$}\} \\
    &= \{X\in\m\La \mid \Ext^{i}_{\La}(X,M)=0\text{ for all $0 < i < n$}\}.
\end{align*}
We only show the first equality; the other follows dually. Since by Proposition \ref{prop:M is n-rigid} the module $M$ is $n$-rigid, the inclusion
\[
\add(M) \subseteq \{X\in\m\La \mid \Ext^{i}_{\La}(M,X)=0\text{ for all $0 < i < n$}\}
\]
holds. It remains to show the opposite inclusion, that is that if $\Ext^{i}_{\La}(M,X)=0$ for all $0<i<n$, then $X\in\add(M)$. We show the contrapositive statement that if $X\not\in \add(M)$, then $\Ext^{i}_{\La}(M,X)\neq 0$ for some $i\in\{1,\dots,n-1\}$. By additivity of $\Ext^{i}_{\La}(-,-)$ we may assume that $X$ is indecomposable. Since by Corollary \ref{cor:M is basic and tau_n inverse of Lambda} we have that $\La\in\add(M)$ and $D(\La)\in\add(M)$, it follows that $X$ is neither projective nor injective. Since $X$ is neither projective nor injective, it follows from Theorem \ref{thrm:representation-finite string algebra}(d) that $X$ is simple. Then $X\isom S(v)$ for some vertex $v\in Q_0$. Clearly $\delta(v)\neq (0,1)$ and $\delta(v)\neq (1,0)$ because in the first case we have that $S(v)$ is injective while in the second case we have that $S(v)$ is projective. We consider the cases $\delta^{-}(v)=2$ and $\delta^{-}(v)=1$ separately.

Case $\delta^{-}(v)=2$. In this case we have by Lemma \ref{lem:cosyzygy and tau-inverse of simples}(c) that $\tau^{-}(S(v))\isom I(v)$. By Lemma \ref{lem:taun-inverse gives nonzero ext} it follows that $\Ext^{1}_{\La}(I(v),S(v))\neq 0$. Since by Corollary \ref{cor:M is basic and tau_n inverse of Lambda} we have that $I(v)\in\add(M)$, we conclude that $\Ext^{1}_{\La}(M,S(v))\neq 0$, as required.

Case $\delta^{-}(v)=1$. In this case we have that $\delta(v)=(1,1)$ or $\delta(v)=(1,2)$. By Corollary \ref{cor:unique flow path} there exists a unique $k$-flow path $\mathbf{v}$ in $Q$ such that $v=v_j$ for some $j>1$. Notice that $j<k+q_k$ also holds. We first claim that $n$ does not divide $k-j+q_k$. 

To show this, assume towards a contradiction that $k-j+q_k=mn$ for some $m\in\ZZ_{\geq 0}$. Then $j=k-mn+q_k$. Since we have $1<j<k+q_k$, we obtain 
\[
1 < k-mn+q_k<k+q_k.
\]
Using $k+q(\mathbf{v})=p(\mathbf{v})n$ and $q(\mathbf{v})=-1+q_1+q_k$, we obtain that
\[
0<m<p(\mathbf{v})-\frac{q_1}{n},
\]
which implies $0<m<p(\mathbf{v})$. But then by Corollary \ref{cor:j-th taun-inverse of P(v)} we have
\[
X\isom S(v)=S(v_j) = S(v_{k-mn+q_k}) \isom \tau_n^{-m}(P(\mathbf{v}))\in\add(M),
\]
which contradicts $X\not\in \add(M)$.

Hence $n$ does not divide $k-j+q_k$. Let $m$ be the unique integer such that $m<\frac{k-j+q_k}{n}<m+1$. Using $1<j<k+q_k$, we obtain that $0\leq m \leq p(\mathbf{v})-1$. Then by Lemma \ref{lem:n-th cosyzygy and taun-inverse of P(v)}, Corollary \ref{cor:j-th taun-inverse of P(v)} and Lemma \ref{lem:n-th cosyzygy and taun-inverse of simples in middle of k-flow paths} it follows that
\begin{equation}\label{eq:omega and taun of P}
\Omega^{-(k-mn+q_k-j)}\tau_n^{-m}(P(\mathbf{v})) \isom S(v_j).
\end{equation}
Set $i\coloneqq (m+1)n-k-q_k+j$. Since $m<\frac{k-j+q_k}{n}<m+1$, we obtain that $0<i<n$. Then, using (\ref{eq:omega and taun of P}), we compute
\begin{align*}
    \tau_{i}^{-}(S(v_j)) &\isom \tau_{i}^{-} \Omega^{-(k-mn+q_k-j)}\tau_n^{-m}(P(\mathbf{v})) \\
    &= \tau^{-}\Omega^{-(i-1+k-mn+q_k-j)}\tau_n^{-m}(P(\mathbf{v})) \\
    &= \tau^{-}\Omega^{-((m+1)n-k-q_k+j-1+k-mn+q_k-j)}\tau_n^{-m}(P(\mathbf{v})) \\
    &=\tau^{-}\Omega^{-(n-1)}\tau_n^{-m}(P(\mathbf{v})) \\
    &= \tau^{-(m+1)}_n(P(\mathbf{v})).
\end{align*}
By Corollary \ref{cor:j-th taun-inverse of P(v)} and since $0\leq m\leq p(\mathbf{v})-1$, it follows that $\tau_n^{-(m+1)}(P(\mathbf{v}))\neq 0$. Then by Lemma \ref{lem:taun-inverse gives nonzero ext} we have that $\Ext^{i}_{\La}(\tau_n^{-(m+1)}(P(\mathbf{v})),S(v_j))\neq 0$, which shows that $\Ext^{i}_{\La}(M,S(v))\neq 0$ since $\tau_n^{-(m+1)}(P(\mathbf{v}))\in\add(M)$.

Finally, the fact that $M$ is the unique basic $n$-cluster tilting module up to isomorphism follows from Proposition \ref{prop:n-ct is closed under n-AR translations and uniqueness of n-ct}(c).
\end{proof}

\section{Main result and applications}\label{sec:main result and applications}

We are now ready to state our main result.

\begin{theorem}\label{thrm:n-cluster tilting iff n-admissible}
Let $Q$ be a quiver, let $\La=\K Q/\J^2$ and let $n\in\ZZ_{\geq 2}$. Then the algebra $\La$ admits an $n$-cluster tilting subcategory $\cC\subseteq \m\La$ if and only if $Q$ is an $n$-admissible quiver. If moreover $Q\neq \tilde{A}_m$ for any $m\geq 1$, then $\cC$ is unique and $\cC=\add\left(\bigoplus_{j\geq 0}\tau_n^{-j}(\La)\right)$.
\end{theorem}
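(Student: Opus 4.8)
The plan is to deduce the theorem almost entirely from the machinery already assembled in Sections 2--3, treating the three cases $Q=A_1$, $Q=\tilde{A}_m$, and the generic case separately. First I would dispose of the degenerate cases: if $Q=A_1$ then $\La=\K$ and the claim reduces to Proposition \ref{prop:the cases A_1 and A_m tilde}(a), which says the only $n$-cluster tilting subcategory is $\m\La$ itself (and $A_1$ is $n$-admissible for all $n$ by Example \ref{ex:n-admissible quivers}(a)); if $Q=\tilde{A}_m$ then by Definition \ref{def:n-admissible}(a) $\tilde{A}_m$ is $n$-admissible exactly when $n\divides m$, and Proposition \ref{prop:the cases A_1 and A_m tilde}(b) shows this is precisely the condition for an $n$-cluster tilting subcategory to exist. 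This handles the iff in both exceptional cases and confirms that uniqueness may genuinely fail for $\tilde{A}_m$ (the subcategory depends on a choice of simple module $S$), which is why the final sentence excludes that quiver.

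For the main case $Q\neq A_1,\tilde{A}_m$, I would establish the two directions of the equivalence as follows. For the ``only if'' direction, suppose $\cC\subseteq\m\La$ is an $n$-cluster tilting subcategory. By Proposition \ref{prop:Q is n-pre-admissible} the quiver $Q$ is $n$-pre-admissible, and then for every $k$-flow path $\mathbf{v}$ Proposition \ref{prop:n divides k+q(v)} gives $n\divides(k+q(\mathbf{v}))$; by Definition \ref{def:n-admissible}(b) this says exactly that $Q$ is $n$-admissible. For the converse, suppose $Q$ is $n$-admissible. Then $Q$ is in particular $n$-pre-admissible, so the entire apparatus of Section \ref{sec:sufficient conditions} applies with this fixed $Q$: Proposition \ref{prop:n-admissible gives n-cluster tilting} shows that the module $M$ of \eqref{eq:definition of M} is an $n$-cluster tilting module, whence $\cC=\add(M)$ is an $n$-cluster tilting subcategory and $\La$ admits one.

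It remains to prove uniqueness and the explicit description in the generic case. Since $Q\neq A_1,\tilde{A}_m$, Theorem \ref{thrm:representation-finite string algebra}(c) tells us $\La$ is representation-finite, so any $n$-cluster tilting subcategory $\cC$ has the form $\cC=\add(M')$ for a basic $n$-cluster tilting module $M'$. By the uniqueness statement in Proposition \ref{prop:n-admissible gives n-cluster tilting}, every basic $n$-cluster tilting module is isomorphic to $M$, so $M'\isom M$ and hence $\cC=\add(M)$ is unique. Finally, Corollary \ref{cor:M is basic and tau_n inverse of Lambda} identifies $M\isom\bigoplus_{j\geq 0}\tau_n^{-j}(\La)$, giving $\cC=\add\left(\bigoplus_{j\geq 0}\tau_n^{-j}(\La)\right)$ as claimed.

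I do not expect a genuine obstacle here, since all the substantive work (the flow-path computations of $\tau_n^{-j}$, the $n$-rigidity of $M$, and the verification that $M$ exhausts $\cC$) has been carried out in the preceding sections; the theorem is essentially a packaging of Propositions \ref{prop:the cases A_1 and A_m tilde}, \ref{prop:Q is n-pre-admissible}, \ref{prop:n divides k+q(v)}, \ref{prop:n-admissible gives n-cluster tilting} and Corollary \ref{cor:M is basic and tau_n inverse of Lambda}. The one point requiring a little care is the logical bookkeeping: Section \ref{sec:sufficient conditions} fixes $Q$ with $Q\neq A_1,\tilde{A}_m$ throughout, so when invoking Proposition \ref{prop:n-admissible gives n-cluster tilting} I must make sure the exceptional quivers have already been separated out, which is exactly why I would structure the proof as a case split on $Q$ at the very start.
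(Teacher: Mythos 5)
Your proposal is correct and follows essentially the same route as the paper: the theorem is assembled from Proposition \ref{prop:Q is n-pre-admissible}, Proposition \ref{prop:the cases A_1 and A_m tilde}, Proposition \ref{prop:n divides k+q(v)}, Proposition \ref{prop:n-admissible gives n-cluster tilting} and Corollary \ref{cor:M is basic and tau_n inverse of Lambda}, with the exceptional quivers $A_1$ and $\tilde{A}_m$ separated out first. The only cosmetic difference is that for uniqueness you invoke representation-finiteness to write $\cC=\add(M')$, whereas the paper gets this directly from Proposition \ref{prop:n-ct is closed under n-AR translations and uniqueness of n-ct}(c); both are valid.
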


\begin{proof}
The statement that if $\La$ admits an $n$-cluster tilting subcategory, then $Q$ is an $n$-admissible quiver follows from Proposition \ref{prop:Q is n-pre-admissible}, Proposition \ref{prop:the cases A_1 and A_m tilde} and Proposition \ref{prop:n divides k+q(v)}. The statement that if $Q$ is an $n$-admissible quiver, then $\La$ admits an $n$-cluster tilting subcategory follows from Proposition \ref{prop:the cases A_1 and A_m tilde} and Proposition \ref{prop:n-admissible gives n-cluster tilting}. The description of $\cC$ in the case $Q\neq \tilde{A}_m$ follows from Proposition \ref{prop:n-admissible gives n-cluster tilting}.
\end{proof}

\begin{remark}\label{rem:1-cluster tilting subcategories}
In Theorem \ref{thrm:n-cluster tilting iff n-admissible} we classify $n$-cluster tilting subcategories for bound quiver algebras of the form $\K Q/\J^2$ when $n\geq 2$. We also find that all of them are of the form $\add(M)$ for an $n$-cluster tilting module $M$. If $n=1$, then the algebra $\La=\K Q/\J^2$ admits a unique $1$-cluster tilting subcategory, namely the whole module category $\m \La$. Moreover, the module category $\m\La$ is of the form $\add(M)$ if and only if $\La$ is a rep\-re\-sen\-ta\-tion-fi\-nite algebra. A result of Gabriel \cite{Gab1} classifies rep\-re\-sen\-ta\-tion-fi\-nite algebras with radical square zero in terms of their \emph{separated quiver}; see also \cite[Section X.2]{ARS}.
\end{remark}

Using Theorem \ref{thrm:n-cluster tilting iff n-admissible} we can construct many examples of algebras that admit $n$-cluster tilting modules and have many interesting properties. As an example, an answer to a question of Erdmann and Holm from \cite{EH} is given in \cite{MV} using radical square zero bound quiver algebras.

\begin{example}\label{ex:n-cluster tilting for n-admissible}
\begin{enumerate}[label=(\alph*)]
    \item Let $Q=A_m$, let $\La=\K Q/\J^2$ and let $n\geq 2$ be such that $n\divides (m-1)$. Then \[
    \La\oplus \left(\bigoplus_{j=1}^{\frac{m-1}{n}}S(m-jn)\right)
    \]
    is the unique basic $n$-cluster tilting $\La$-module.
    
    \item Let $Q$ be as in Example \ref{ex:n-pre-admissible quivers}(c) and let $\La=\K Q/\J^2$. Then the module
    \begin{align*}
        M &= \La\oplus\tau_2^{-}(\La)\oplus \tau_2^{-2}(\La) \\
        &\isom \La \oplus \left(\begin{smallmatrix}
        1 \\ 2\;\; 
        \end{smallmatrix}\oplus \begin{smallmatrix}
        7
        \end{smallmatrix}\oplus \begin{smallmatrix}
        2\;\;8 \\ 3
        \end{smallmatrix}\oplus \begin{smallmatrix}
        4 \;\; 5 \\ 5
        \end{smallmatrix}\oplus \begin{smallmatrix}
        3 \\ \;\; 6
        \end{smallmatrix}\oplus \begin{smallmatrix}
        1 \\ \;\; 1
        \end{smallmatrix}\right) \oplus \begin{smallmatrix}
        3 \\ 4\;\;
        \end{smallmatrix}
    \end{align*}
    is the unique basic $2$-cluster tilting $\La$-module.
    
    \item Let $Q$ be as in Example \ref{ex:n-pre-admissible quivers}(d) and let $\La=\K Q/\J^2$. Then the module 
    \begin{align*}
        M &= \La\oplus\tau_3^{-}(\La) \\
        &\isom \La\oplus\left(\begin{smallmatrix}
        10 \;\; 7\; \\ 8
        \end{smallmatrix} \oplus \begin{smallmatrix}
        5 \\ \;\;6
        \end{smallmatrix} \oplus \begin{smallmatrix}
        \;3 \;\;12 \\ 4
        \end{smallmatrix} \oplus \begin{smallmatrix}
        1 \\ \;\;2
        \end{smallmatrix} \oplus \begin{smallmatrix}
        5 \\ 11\;\;\;
        \end{smallmatrix} \oplus
        \begin{smallmatrix}
        1 \\ 9\;\;
        \end{smallmatrix}\right)
    \end{align*}
    is the unique basic $3$-cluster tilting $\La$-module.
\end{enumerate}
\end{example}

In the rest of this section we further investigate some properties of radical square zero bound quiver algebras which admit $n$-cluster tilting subcategories. We start with describing a method to construct $n$-admissible quivers.

\begin{remark}\label{rem:how to find n-admissible quivers}
Starting from any $n$-pre-admissible quiver $Q$, it is not difficult to construct an $n$-admissible quiver by adjusting the lengths of flow paths in $Q$ appropriately. For example, if $Q$ is the quiver
\[
\begin{tikzcd}[row sep=small, column sep=small]
    1 \arrow[r] \arrow[loop above] & 2,
\end{tikzcd}
\]
then $Q$ is $n$-pre-admissible for any $n\geq 2$ and there are two flow paths in $Q$, namely
\begin{align*}
    \mathbf{v}&: 1\longrightarrow 1, \\
    \mathbf{u}&: 1\longrightarrow 2.
\end{align*}
In particular, we have $q(\mathbf{v})=0$ and $q(\mathbf{u})=-1$. Now let us fix an $n\geq 2$ and construct an $n$-admissible quiver. We pick $k_{\mathbf{v}},k_{\mathbf{u}}\geq 2$ such that $n\divides k_{\mathbf{v}}$ and $n\divides (k_{\mathbf{u}}-1)$. Then the quiver
\[
\begin{tikzcd}[row sep=small, column sep=small]
    {} & \cdots \arrow[bend left=15, rd] & {} & {} & {} & {} \\
    v_2 \arrow[bend left=15, ru] & {} & v_{k_{\mathbf{v}}-1} \arrow[bend left=15, ld] & {} & {} & {}\\
    {} & 1 \arrow[bend left=15, lu] \arrow[r] & u_2 \arrow[r] & \cdots \arrow[r] & u_{k_{\mathbf{u}}-1} \arrow[r] & 2
\end{tikzcd}
\]
is $n$-admissible.
\end{remark}

\subsection{\texorpdfstring{$n\ZZ$}{nZ}-cluster tilting subcategories}

We recall the definition of $n\ZZ$-cluster tilting subcategories from \cite{IJ}.

\begin{definition}\cite[Definition-Proposition 2.15]{IJ}\label{def:nZ-cluster-tilting}
Let $\La$ be an algebra and let $\cC\subseteq\m\La$ be an $n$-cluster tilting subcategory. We say that $\cC$ is an \emph{$n\ZZ$-cluster tilting subcategory} if one of the two equivalent conditions
\begin{enumerate}[label=(\alph*)]
    \item $\Omega^{n}(\cC)\subseteq \cC$, and
    \item $\Omega^{-n}(\cC)\subseteq \cC$
\end{enumerate}
holds.
\end{definition}

In this subsection we classify radical square zero bound quiver algebras which admit $n\ZZ$-cluster tilting subcategories. We start with the following proposition.

\begin{proposition}\label{prop:degree of vertex in nZ-ct}
Let $\La=\K Q/\J^2$ and assume that there exists an $n\ZZ$-cluster tilting subcategory $\cC\subseteq\m\La$. Let $v\in Q_0$ be a vertex of $Q$. Then $\delta(v)\in\{(0,0),(0,1),(1,0),(1,1)\}$.
\end{proposition}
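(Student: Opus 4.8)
The plan is to argue by contradiction: I assume some vertex of $Q$ carries an entry $2$ in its degree and derive a violation of $n$-rigidity from the $n\ZZ$-closure. Since an $n\ZZ$-cluster tilting subcategory is in particular $n$-cluster tilting, Proposition~\ref{prop:Q is n-pre-admissible} shows that $Q$ is $n$-pre-admissible, so every degree lies in $\{(0,0),(0,1),(1,0),(1,1),(1,2),(2,1)\}\cup E$ with $E=\{(2,2)\}$ only when $n=2$; thus I only need to exclude $(1,2)$, $(2,1)$ and $(2,2)$. If $Q=A_1$ or $Q=\tilde{A}_m$ no such vertex occurs and the statement is immediate, so I may assume $Q$ has a flow path. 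The duality $D$ sends an $n\ZZ$-cluster tilting subcategory of $\m\La$ to one of $\m\La^{\op}$, interchanges $\Omega$ with $\Omega^{-}$ and the two conditions of Definition~\ref{def:nZ-cluster-tilting}, and replaces $\delta(v)=(a,b)$ by $(b,a)$; hence it suffices to exclude $\delta^{+}(v)=2$, that is the degrees $(1,2)$ and $(2,2)$.

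So suppose $\delta^{+}(v)=2$, with arrows $v\to u_1$ and $v\to u_2$. Lemma~\ref{lem:no zigzags} gives $\delta^{-}(u_1)=\delta^{-}(u_2)=1$, so $P(v)=\begin{smallmatrix} v \\ u_1\;u_2\end{smallmatrix}$ and $I(u_2)=\begin{smallmatrix} v \\ u_2\end{smallmatrix}$. Applying Lemma~\ref{lem:cosyzygy and tau-inverse of simples}(b) at the vertex $u_1$ yields $\tau^{-}(S(u_1))\isom\coker\bigl(S(u_1)\hookrightarrow P(v)\bigr)\isom I(u_2)$, whence $\Ext^{1}_{\La}(I(u_2),S(u_1))\neq 0$ by Lemma~\ref{lem:taun-inverse gives nonzero ext}. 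As $I(u_2)$ is injective, $I(u_2)\in\cC$; consequently the whole problem reduces to proving $S(u_1)\in\cC$, since that would contradict the $n$-rigidity of $\cC$.

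To force $S(u_1)$ into $\cC$ I use $\Omega^{n}(\cC)\subseteq\cC$ (Definition~\ref{def:nZ-cluster-tilting}(a)) together with the fact that, as $\La$ has radical square zero, $\Omega(S(v))=\rad P(v)\isom S(u_1)\oplus S(u_2)$. If $\delta(v)=(2,2)$ (so $n=2$), then $I(v)$ is injective and nonprojective, and Lemma~\ref{lem:no zigzags} forces each summand of its projective cover $P(w_1)\oplus P(w_2)$ to be two-dimensional, so $\Omega(I(v))\isom S(v)$ and hence $\Omega^{2}(I(v))\isom S(u_1)\oplus S(u_2)$; this lies in $\cC$ because $I(v)\in\cC$. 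If $\delta(v)=(1,2)$, then $v=v_k$ is the endpoint of a flow path $\mathbf{v}$ with $q_k(\mathbf{v})=1$, and I seed the computation with the injective $I(\mathbf{v})\in\cC$. Tracing syzygies out of $I(\mathbf{v})$ by means of Lemma~\ref{lem:cosyzygy and tau-inverse of simples}, one checks that the chain runs forward along $\mathbf{v}$, passing through its interior vertices (each of degree $(1,1)$, so each contributing a single simple) and forking only at $v_k$; a short analysis of the four possibilities for $\delta(v_1)$ shows that the fork is reached after exactly $p(\mathbf{v})\,n$ steps, i.e. $\Omega^{p(\mathbf{v})n}(I(\mathbf{v}))\isom S(u_1)\oplus S(u_2)$. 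The step count is a multiple of $n$ precisely because $n\divides\bigl(k+q(\mathbf{v})\bigr)$ by Proposition~\ref{prop:n divides k+q(v)}. Iterating $\Omega^{n}(\cC)\subseteq\cC$ then places $S(u_1)\oplus S(u_2)$, and therefore $S(u_1)$, in $\cC$, giving the required contradiction. Finally, duality excludes $\delta^{-}(v)=2$, so every vertex has degree in $\{(0,0),(0,1),(1,0),(1,1)\}$.

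I expect the only real difficulty to be the bookkeeping in the $(1,2)$ case, namely verifying that the syzygy chain issuing from $I(\mathbf{v})$ first meets the fork at step $p(\mathbf{v})n$ and not before, uniformly over $\delta(v_1)\in\{(0,1),(2,1),(1,2),(2,2)\}$. The natural but flawed approach is to start the chain at an interior summand $\tau_n^{-j}(P(\mathbf{v}))$ of the flow-path orbit; these vanish when $p(\mathbf{v})=1$, and chasing the short flow path backwards invites an awkward induction over nested forks. Seeding the chain at the injective $I(\mathbf{v})$ instead keeps the argument uniform in $p(\mathbf{v})$ and is the key device that makes the computation go through in one stroke.
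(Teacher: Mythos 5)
Your proof is correct, and its endgame coincides with the paper's: both arguments force $S(u_1)\oplus S(u_2)$ into $\cC$ via the closure condition $\Omega^{n}(\cC)\subseteq\cC$ and then contradict $n$-rigidity using $\Ext^{1}_{\La}(I(u_2),S(u_1))\neq 0$ (you obtain this from $\tau^{-}(S(u_1))\isom I(u_2)$ and Lemma \ref{lem:taun-inverse gives nonzero ext}; the paper from $\Omega(I(u_2))\isom S(u_1)$ --- the same fact). Where you genuinely diverge is in how you produce a module of $\cC$ whose iterated $n$-th syzygy is $S(u_1)\oplus S(u_2)$. The paper does this in one stroke and with no case distinctions: since $P(v)\in\cC$ is noninjective, Proposition \ref{prop:n-ct is closed under n-AR translations and uniqueness of n-ct}(a) supplies a nonprojective indecomposable $X\in\cC$ with $\tau_n(X)\isom P(v)$, whence $\Omega^{n-1}(X)\isom\tau^{-}\tau_n(X)\isom\tau^{-}(P(v))\isom S(v)$ by the dual of Lemma \ref{lem:cosyzygy and tau-inverse of simples}(c), and a single application of the closure condition gives $\Omega^{n}(X)\isom\Omega(S(v))\isom S(u_1)\oplus S(u_2)\in\cC$. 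You instead seed the syzygy chain at an explicit injective ($I(v)$ when $\delta(v)=(2,2)$, $I(\mathbf{v})$ when $\delta(v)=(1,2)$), which costs you the split into these two cases, the further case analysis on $\delta(v_1)$, an appeal to Proposition \ref{prop:n divides k+q(v)} to know that the step count $p(\mathbf{v})n$ is a multiple of $n$, and $p(\mathbf{v})$ iterations of the closure condition instead of one. I checked your claim $\Omega^{p(\mathbf{v})n}(I(\mathbf{v}))\isom S(u_1)\oplus S(u_2)$ in all four subcases for $\delta(v_1)$ and it holds (the chain passes through single simples at the interior $(1,1)$-vertices and first forks at $v_k$), so the bookkeeping you flag as the main risk does go through; but the paper's use of the $\tau_n$-bijection renders all of that flow-path machinery unnecessary for this proposition.
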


\begin{proof}
Since $\cC$ is an $n\ZZ$-cluster tilting subcategory, it follows that $Q$ is an $n$-admissible quiver. Hence $\delta^{+}(v)\leq 2$ and $\delta^{-}(v)\leq 2$ and it is enough to show that $\delta^{+}(v)\neq 2$ and $\delta^{-}(v)\neq 2$. We show that $\delta^{+}(v)\neq 2$; the fact that $\delta^{-}(v)\neq 2$ follows dually.

Assume towards a contradiction that $\delta^{+}(v)=2$ and let $v\longrightarrow u_1$ and $v\longrightarrow u_2$ be the two arrows starting at $v$. Then $P(v)\in \cC$ and $P(v)$ is not injective. It follows from Proposition \ref{prop:n-ct is closed under n-AR translations and uniqueness of n-ct}(a) that $P(v)\isom \tau_n(X)$ for some nonprojective indecomposable module $X\in\cC$. In particular, the module $\Omega^{n-1}(X)$ is nonprojective and so we have
\[
\Omega^{n-1}(X) \isom \tau^{-}\tau\Omega^{n-1}(X)= \tau^{-}\tau_n(X) \isom \tau^{-}(P(v)) \isom S(v),
\]
where the last isomorphism follows from the dual of Lemma \ref{lem:cosyzygy and tau-inverse of simples}(c). Since by the dual of Lemma \ref{lem:cosyzygy and tau-inverse of simples}(c) we have $\Omega(S(v))\isom S(u_1)\oplus S(u_2)$, we obtain that
\[
\Omega^{n}(X)= \Omega\Omega^{n-1}(X)\isom \Omega(S(v))\isom S(u_1)\oplus S(u_2).
\]
Since $\cC$ is an $n\ZZ$-cluster tilting subcategory, it follows that $S(u_1)\oplus S(u_2)\in\cC$. But then a direct computation shows that $\Omega(I(u_2))\isom S(u_1)$, from which we conclude that $\Ext^{1}_{\La}(I(u_2),S(u_1)\oplus S(u_2))\neq 0$. This contradicts the fact that $\cC$ is an $n$-cluster tilting subcategory since $I(u_2),S(u_1)\oplus S(u_2)\in\cC$. 
\end{proof}

We can now give the classification of $n\ZZ$-cluster tilting subcategories for radical square zero bound quiver algebras.

\begin{theorem}\label{thrm:nZ-cluster tilting iff Nakayama}
Let $Q$ be a quiver, let $\La=\K Q/\J^2$ and let $n\in\ZZ_{\geq 2}$. Then the algebra $\La$ admits an $n\ZZ$-cluster tilting subcategory $\cC\subseteq \m\La$ if and only if $Q=A_m$ and $n\divides (m-1)$ or $Q=\tilde{A}_m$ and $n\divides m$.
\end{theorem}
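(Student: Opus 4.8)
The plan is to prove the two implications separately, leaning on Proposition~\ref{prop:degree of vertex in nZ-ct} and Theorem~\ref{thrm:n-cluster tilting iff n-admissible} for necessity and on an explicit cosyzygy computation for sufficiency. For the forward direction, suppose $\cC\subseteq\m\La$ is an $n\ZZ$-cluster tilting subcategory. By Proposition~\ref{prop:degree of vertex in nZ-ct} every vertex satisfies $\delta(v)\in\{(0,0),(0,1),(1,0),(1,1)\}$, so in particular $\delta^{-}(v)\le 1$ and $\delta^{+}(v)\le 1$. First I would translate this into a classification of $Q$: a vertex of degree $(0,0)$ forces $Q=A_1$ by connectedness, and otherwise the underlying graph of $Q$ is connected with all valencies at most two, hence a path or a cycle. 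Matching the orientation against the in- and out-degree bounds then forces $Q=A_m$ (a linearly oriented path, with internal vertices of degree $(1,1)$ and a single source and sink) or $Q=\tilde{A}_m$ (a cyclically oriented cycle in which every vertex has degree $(1,1)$). This combinatorial step is short and is the only place the degree restriction is used.

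Next I would determine the divisibility conditions. Since an $n\ZZ$-cluster tilting subcategory is in particular $n$-cluster tilting, Theorem~\ref{thrm:n-cluster tilting iff n-admissible} shows that $Q$ is $n$-admissible. For $Q=A_m$ this is equivalent to $n\divides(m-1)$ by Example~\ref{ex:n-admissible quivers}(a), and for $Q=\tilde{A}_m$ it is equivalent to $n\divides m$ by Definition~\ref{def:n-admissible}(a). This finishes necessity.

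For the backward direction I would exhibit an $n$-cluster tilting subcategory and check the closure condition $\Omega^{-n}(\cC)\subseteq\cC$ from Definition~\ref{def:nZ-cluster-tilting}(b). In both cases the quiver is $n$-admissible, so by Theorem~\ref{thrm:n-cluster tilting iff n-admissible} together with Proposition~\ref{prop:the cases A_1 and A_m tilde}(b) an $n$-cluster tilting subcategory $\cC=\add(M)$ exists, with $M$ as in Example~\ref{ex:n-cluster tilting for n-admissible}(a) for $A_m$ and as in Proposition~\ref{prop:the cases A_1 and A_m tilde}(b) for $\tilde{A}_m$. The key point is that every indecomposable summand of $M$ is either projective-injective or a simple module $S(v)$. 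For a projective-injective summand $\Omega^{-}$ vanishes, so $\Omega^{-n}$ does too, and such summands lie in $\cC$ trivially. For a simple summand, because $\delta^{-}(v)\le 1$ at every vertex, Lemma~\ref{lem:cosyzygy and tau-inverse of simples} gives that $\Omega^{-}(S(v))$ is again simple, namely $S(w)$ for the unique arrow $w\longrightarrow v$, or is zero when $v$ is a source; iterating, $\Omega^{-n}(S(v))$ is the simple at the vertex reached by walking $n$ steps backward along the path respectively the cycle.

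The main obstacle is the bookkeeping in this final step: one must confirm that the simple summands of $M$ occupy a single congruence class of vertices modulo $n$ along $A_m$ respectively $\tilde{A}_m$, and that stepping back by $n$ stays within this class (or reaches a source and yields zero) rather than leaving $\cC$. This is exactly where the hypotheses $n\divides(m-1)$ and $n\divides m$ enter, making the arithmetic of the vertex indices modulo $n$ compatible with the endpoints of $A_m$ and with the wrap-around of $\tilde{A}_m$. Once this is verified, $\Omega^{-n}(\cC)\subseteq\cC$ holds, one of the two equivalent conditions of Definition~\ref{def:nZ-cluster-tilting} is satisfied, and $\cC$ is $n\ZZ$-cluster tilting, completing the proof.
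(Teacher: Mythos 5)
Your proof of the forward direction is exactly the paper's: Proposition \ref{prop:degree of vertex in nZ-ct} restricts the vertex degrees, connectedness forces $Q=A_m$ or $Q=\tilde{A}_m$, and $n$-admissibility via Theorem \ref{thrm:n-cluster tilting iff n-admissible} yields the divisibility conditions. For the converse you diverge slightly: you verify condition (b) of Definition \ref{def:nZ-cluster-tilting} by hand, decomposing $M$ into projective-injective and simple summands and tracking $\Omega^{-n}$ of the simples along the path or cycle, whereas the paper observes that for these Nakayama algebras $\tau(X)\isom\Omega(X)$ for all $X$, so that $\Omega^{n}\isom\tau\Omega^{n-1}=\tau_n$ and closure under $\Omega^{n}$ follows at once from Proposition \ref{prop:n-ct is closed under n-AR translations and uniqueness of n-ct}(a) — condition (a) of the definition rather than (b). Both routes are sound; the paper's is shorter because the $\tau\isom\Omega$ identification makes the closure automatic, while yours is more explicit and makes visible exactly where $n\divides(m-1)$ and $n\divides m$ are used (the congruence-class bookkeeping you describe does go through: for $A_m$ the simple summands are $S(m-jn)$ and $\Omega^{-n}$ shifts $j$ to $j+1$ or gives $0$ at $S(1)$; for $\tilde{A}_m$ one has $\tau^{-}\isom\Omega^{-}$ on simples so $\tau_n^{-}\isom\Omega^{-n}$ and the orbit closes up precisely because $n\divides m$). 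You leave that last verification as a sketch rather than carrying it out, but the claims are correct and the details are routine.
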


\begin{proof}
If $Q=A_m$ and $n\divides (m-1)$ or $Q=\tilde{A}_m$ and $n\divides m$, then $\La$ admits an $n$-cluster tilting subcategory $\cC\subseteq\m\La$ by Theorem \ref{thrm:n-cluster tilting iff n-admissible}. Moreover, in this case, it is easy to see that $\tau(M)\isom \Omega(M)$ for any $M\in\m\La$ and hence $\cC$ is also an $n\ZZ$-cluster tilting subcategory by Proposition \ref{prop:n-ct is closed under n-AR translations and uniqueness of n-ct}(a).

For the other direction, assume that $\La$ admits an $n\ZZ$-cluster tilting subcategory $\cC$. Then by Proposition \ref{prop:degree of vertex in nZ-ct} we have that if $v\in Q_0$, then $\delta(v)\in\{(0,0),(1,0),(0,1),(1,1)\}$. Since $Q$ is connected, we conclude that there exists some $m\in\ZZ_{\geq 1}$ such that $Q=A_m$ or $Q=\tilde{A}_m$. Since $\cC$ is $n$-cluster tilting, it follows that $Q$ is $n$-admissible from Theorem \ref{thrm:n-cluster tilting iff n-admissible}. Hence we conclude that if $Q=A_m$, then $n\divides (m-1)$, while if $Q=\tilde{A}_m$, then $n\divides m$, as required. 
\end{proof}

In particular we see that the only radical square zero bound quiver algebras which admit $n\ZZ$-cluster tilting subcategories are Nakayama algebras.

\subsection{A lattice of \texorpdfstring{$n$}{n}-cluster tilting subcategories}
Before giving our next result, let us recall the following classical definition.

\begin{definition}\label{def:lattice}
A \emph{poset} is a partially ordered set. A \emph{lattice}
is a partially ordered set in which every two elements have a \emph{meet}, that is a greatest lower bound and a \emph{join}, that is a least upper bound. A \emph{complete lattice} is a lattice in which any subset has a greatest lower bound and a least upper bound.
\end{definition}

\begin{example}\label{ex:complete lattice of divisors}
Let $N$ be a positive integer. Then the set $D(N)=\{x\in\ZZ \mid x\geq 1 \text{ and } x \divides N\}$ forms a complete lattice called the \emph{lattice of divisors of $N$} under the relation $x \leq y$ if $x\divides y$. If $x,y\in D(N)$, then their meet corresponds to their greatest common divisor $\gcd(x,y)$ and their join corresponds to their least common multiple $\lcm(x,y)$.
\end{example}

For the rest of this article, we drop our assumption that we consider $n$-cluster tilting subcategories for $n\geq 2$ and we assume that $n\geq 1$ instead. Let $Q\neq \tilde{A}_m$ be a quiver and let $\La=\K Q/\J^2$. Our aim is to show that the collection of $n$-cluster tilting subcategories (for varying $n$) of $\m\La$ forms a lattice with respect to inclusion of subcategories. We start with the following result.

\begin{proposition}\label{prop:inclusion of subcategories}
Let $\La=\K Q/\J^2$ be a radical zero bound quiver algebra and assume that $Q\neq A_1$ and $Q\neq \tilde{A}_m$ for any $m\geq 1$. Let $\cC_n\subseteq \m\La$ be an $n$-cluster tilting subcategory and $\cC_{n'}\subseteq\m\La$ be an $n'$-cluster tilting subcategory. Then $\cC_n\subseteq\cC_{n'}$ if and only if $n'\divides n$.
\end{proposition}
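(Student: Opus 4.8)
The plan is to exploit the uniqueness and explicit shape of the cluster tilting subcategories provided by Theorem \ref{thrm:n-cluster tilting iff n-admissible}. Since $Q\neq\tilde{A}_m$, the existence of $\cC_n$ forces $Q$ to be $n$-admissible and gives $\cC_n=\add\left(\bigoplus_{j\geq 0}\tau_n^{-j}(\La)\right)$, and likewise $Q$ is $n'$-admissible with $\cC_{n'}=\add\left(\bigoplus_{j\geq 0}\tau_{n'}^{-j}(\La)\right)$. By Theorem \ref{thrm:representation-finite string algebra}(d) every indecomposable $\La$-module is simple, projective or injective, and every projective and every injective lies in both $\cC_n$ and $\cC_{n'}$; so I would first reduce the claim $\cC_n\subseteq\cC_{n'}$ to the purely combinatorial statement that every non-projective non-injective simple summand of $\cC_n$ is a summand of $\cC_{n'}$. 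The degenerate values $n=1$ or $n'=1$ are disposed of at the outset: here $\cC_1=\m\La$, and for $n'\geq 2$ the category $\m\La$ cannot be $n'$-cluster tilting, since $n'$-rigidity would give $\Ext^{1}_{\La}(\m\La,\m\La)=0$ and hence force $\La$ semisimple, contradicting the fact that $Q$ (connected and $\neq A_1$) has an arrow. Thus one may assume $n,n'\geq 2$.

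Next I would record, via Corollary \ref{cor:j-th taun-inverse of P(v)}, exactly which simples occur. For a $k$-flow path $\mathbf{v}$ set $p_n(\mathbf{v})=(k+q(\mathbf{v}))/n$; then the non-projective non-injective simple summands of $M(\mathbf{v})$ are precisely the modules $S(v_{k-jn+q_k})$ with $1\leq j\leq p_n(\mathbf{v})-1$, each vertex $v_{k-jn+q_k}$ having degree $(1,1)$. The key structural input is that, by Corollary \ref{cor:unique flow path}(a) and Lemma \ref{lem:same vertices in k-flow path}(a), a simple $S(v)$ with $\delta(v)=(1,1)$ determines both its flow path and its position on that flow path uniquely. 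This is what turns the comparison of $\cC_n$ and $\cC_{n'}$ into a flow-path-by-flow-path numerical computation.

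For the forward implication, suppose $n'\divides n$ and write $n=cn'$, so that $p_{n'}(\mathbf{v})=c\,p_n(\mathbf{v})$. Given a simple summand $S(v_{k-jn+q_k})$ of $\cC_n$ with $1\leq j\leq p_n(\mathbf{v})-1$, I would rewrite its index as $k-(jc)n'+q_k$ and verify $1\leq jc\leq p_{n'}(\mathbf{v})-1$ (the upper bound follows from $jc\leq (p_n(\mathbf{v})-1)c=p_{n'}(\mathbf{v})-c\leq p_{n'}(\mathbf{v})-1$). By Corollary \ref{cor:j-th taun-inverse of P(v)} this exhibits it as $\tau_{n'}^{-jc}(P(\mathbf{v}))$, a summand of $\cC_{n'}$. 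Together with the projectives and injectives, which lie in $\cC_{n'}$ automatically, this gives $\cC_n\subseteq\cC_{n'}$.

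For the converse, fix any flow path $\mathbf{v}$, which exists since $Q\neq A_1,\tilde{A}_m$. If $p_n(\mathbf{v})=1$ then $k+q(\mathbf{v})=n$, and $n'$-admissibility of $Q$ (Definition \ref{def:n-admissible}) directly yields $n'\divides (k+q(\mathbf{v}))=n$. If $p_n(\mathbf{v})\geq 2$ then $\tau_n^{-1}(P(\mathbf{v}))\isom S(v_{k-n+q_k})$ is a non-projective non-injective simple summand of $\cC_n$, hence by hypothesis a summand of $\cC_{n'}$; being such a simple on the flow path $\mathbf{v}$, it must coincide with $\tau_{n'}^{-j'}(P(\mathbf{v}))\isom S(v_{k-j'n'+q_k})$ for some $1\leq j'\leq p_{n'}(\mathbf{v})-1$, and comparing indices gives $j'n'=n$, so $n'\divides n$. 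The main obstacle is exactly this last matching step: I must rule out that $S(v_{k-n+q_k})$ enters $\cC_{n'}$ through a different flow path or a different position, and this is precisely what the uniqueness statements of Corollary \ref{cor:unique flow path}(a) and Lemma \ref{lem:same vertices in k-flow path}(a) guarantee.
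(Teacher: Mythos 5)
Your proof is correct, and while the reduction to explicit module descriptions and the forward implication coincide with the paper's argument (same index arithmetic $1\leq jc\leq p_{n'}(\mathbf{v})-1$, same treatment of the degenerate cases $n=1$ or $n'=1$), your converse takes a genuinely different and in fact more economical route. The paper, after placing $S(v_{k-n+q_k})$ in $\cC_{n'}$, writes $n=hn'+r$ and rules out $h=0$, $h\geq p'$ and $r>0$ one by one, each time by producing a nonzero $\Ext^{i}$ between two objects of $\cC_{n'}$ with $1\leq i\leq n'-1$ (via Lemma \ref{lem:taun-inverse gives nonzero ext} and explicit cosyzygy computations), contradicting $n'$-rigidity. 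You instead exploit that $\cC_{n'}=\add(M_{n'})$ is already completely classified: its only non-projective non-injective indecomposables are the simples $S(u_{k''-j'n'+q_{k''}(\mathbf{u})})$ sitting at degree-$(1,1)$ vertices of flow paths, so by Krull--Schmidt the simple $S(v_{k-n+q_k})$ must be one of them, and Lemma \ref{lem:same flow paths}(a) together with Lemma \ref{lem:same vertices in k-flow path}(a) pins down the flow path and the position, yielding $n=j'n'$ directly. This trades the paper's homological contradictions for a purely combinatorial matching; it buys a shorter argument at the cost of leaning harder on the uniqueness statements for flow paths, which the paper has in any case already established. Both arguments are sound; I find no gap in yours.
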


\begin{proof}
If $n'=1$, then the result is clear since $\cC_{n'}=\cC_1=\m\La$. If $n=1$, then the result is also clear since $\m\La$ is an $n$-cluster tilting subcategory if and only if $n=1$ (since $Q\neq A_1$). 

Hence we may assume that $n>1$ and $n'>1$. Then $Q$ is $n$-admissible and $n'$-admissible by Theorem \ref{thrm:n-cluster tilting iff n-admissible}. Moreover, we have that $\cC_n=\add(M_n)$ and $\cC_{n'}=\add(M_{n'})$ where
\[
M_n = \bigoplus_{j\geq 0}\tau_n^{-j}(\La) \text{ and } M_{n'} = \bigoplus_{j\geq 0}\tau_{n'}^{-j}(\La).
\]

Assume first that $n'\divides n$. Then $n=hn'$ for some $h\geq 1$. Let $X\in\cC_n$ and we show that $X\in\cC_{n'}$. Since $\cC_n$ and $\cC_{n'}$ are closed under direct sums and summands, we may assume that $X$ is indecomposable. If $X$ is projective or injective, then $X\in\cC_{n'}$ since $\cC_{n'}$ is an $n'$-cluster tilting subcategory. Otherwise we have by (\ref{eq:definition of M}) and Corollary \ref{cor:M is basic and tau_n inverse of Lambda} that $X\isom \tau_n^{-j}(P(\mathbf{v}))$ for some $k$-flow path $\mathbf{v}$ in $Q$ and some $j\geq 1$. Since $Q$ is $n$-admissible and $n'$-admissible, we have that $k+q(\mathbf{v})=pn$ and $k+q(\mathbf{v})=p'n'$ for some $p,p'\in\ZZ_{\geq 1}$. In particular, we have $p=\frac{p'n'}{n}$. Moreover, by Corollary \ref{cor:j-th taun-inverse of P(v)} and since $X$ is not injective, we have that $1\leq j\leq p-1$. Hence we obtain
\[
1 \leq jh \leq (p-1)h = ph - h = \frac{p'n'}{n}\frac{n}{n'}-h=p'-h\leq p'-1,
\]
and so $1\leq jh\leq p'-1$. Hence by Corollary \ref{cor:j-th taun-inverse of P(v)} we have
\[
X\isom \tau_n^{-j}(P(\mathbf{v}))\isom S(v_{k-jn+q_k})=S(v_{k-jhn'+q_k})\isom\tau_{n'}^{-jh}(P(\mathbf{v}))\in\add(M_{n'})=\cC_{n'},
\]
as required.

Assume now that $\cC_n\subseteq\cC_{n'}$. Then by Lemma \ref{lem:the existence of flow paths} there exists a $k$-flow path $\mathbf{v}$ in $Q$. Since $Q$ is $n$-admissible and $n'$-admissible, we have that $k+q(\mathbf{v})=pn$ and $k+q(\mathbf{v})=p'n'$ for some $p,p'\in\ZZ_{\geq 1}$. If $p=1$, then $n=p'n'$ and $n'\divides n$ as required. Otherwise, assume that $p>1$. Then by Corollary \ref{cor:j-th taun-inverse of P(v)} and Proposition \ref{prop:n-ct is closed under n-AR translations and uniqueness of n-ct}(a) we have that
\[
\tau_n^{-}(P(\mathbf{v})) \isom S(v_{k-n+q_{k}})\in\cC_n.
\]
Since by assumption we have $\cC_n\subseteq\cC_{n'}$, we conclude that $S(v_{k-n+q_{k}})\in\cC_{n'}$. Write $n=hn'+r$ with $h\in \ZZ_{\geq 0}$ and $0\leq r\leq n'-1$. We first claim that $1\leq h \leq p'-1$. 

First assume towards a contradiction that $h=0$. Then $n=r$ and
\[
k-n+q_k=(p-1)n+1-q_1\geq (2-1)\cdot 1+1-1 = 1.
\]
Hence by Lemma \ref{lem:n-th cosyzygy and taun-inverse of P(v)}(a) we have that $\Omega^{-n}(P(\mathbf{v}))\isom S(v_{k-n+q_k})$. Since $0<n=r<n'$, we have $0<n'-n<n'$. Hence by Proposition \ref{prop:n-ct is closed under n-AR translations and uniqueness of n-ct}(a) we obtain 
\[
\tau_{n'}^{-}(P(\mathbf{v})) = \tau^{-}_{n'-n} \Omega^{-n}(P(\mathbf{v})) \isom \tau^{-}_{n'-n}(S(v_{k-n+q_k}))\in\cC_{n'}. 
\]
It follows from Lemma \ref{lem:taun-inverse gives nonzero ext} that $\Ext^{n'-n}_{\La}(\tau_{n'}^{-}(P(\mathbf{v})),S(v_{k-n+q_k}))\neq 0$. This contradicts the fact that $\cC_{n'}$ is $n'$-cluster tilting since $\tau_{n'}^{-}(P(\mathbf{v})),S(v_{k-n+q_k})\in\cC_{n'}$.

Next assume towards a contradiction that $h\geq p'$. Then we have
\[
n < pn = k+q(\mathbf{v}) = p'n' \leq hn' \leq hn'+r = n,
\]
which is a contradiction.

We conclude that $n=hn'+r$ with $1\leq h\leq p'-1$ and we now claim that $r=0$. Assume towards a contradiction that $r>0$. By Corollary \ref{cor:j-th taun-inverse of P(v)} and Proposition \ref{prop:n-ct is closed under n-AR translations and uniqueness of n-ct}(a) we have that 
\[
\tau_{n'}^{-h}(P(\mathbf{v})) \isom S(v_{k-hn'+q_k})\in \cC_{n'}.
\]
Then $1\leq k-hn'+q_k\leq k-1+q_k$ and $(k-hn'+q_k) -r = k-(hn'+r)+q_k = k-n+q_k \geq 1$, and so by Lemma \ref{lem:n-th cosyzygy and taun-inverse of simples in middle of k-flow paths}(a) we have that
\[
\Omega^{-r}(S(v_{k-hn'+q_k})) \isom S(v_{k-hn'-r+q_k})=S(v_{k-n+q_k}).
\]
But then we have that
\[
\Ext^r_{\La}(S(v_{k-n+q_k}),S(v_{k-hn'+q_k})) \isom  \Ext^r_{\La}(\Omega^{-r}(S(v_{k-hn'+q_k})),S(v_{k-hn'+q_k}))\neq 0.
\]
This contradicts the fact that $\cC_{n'}$ is $n'$-cluster tilting since $S(v_{k-n+q_k}), S(v_{k-hn'+q_k})\in\cC_{n'}$ and $1\leq r\leq n'-1$. We conclude that $r=0$ and so $n=hn'$, as required.
\end{proof}

We also need the following definition.

\begin{definition}\label{def:admissible degree}
Let $Q$ be a quiver. We define the \emph{admissible degree} of $Q$ to be
\[
N(Q) \coloneqq \begin{cases} \max\left(\{n\in \ZZ_{\geq 2} \mid \text{$Q$ is $n$-admissible}\}\cup\{1\}\right), &\mbox{if $Q\neq A_1$,} \\ 1, &\mbox{if $Q=A_1$.} \end{cases} 
\]
\end{definition}

Since $Q$ is finite, it follows that $N(Q)$ is well-defined. We now give the main result for this section.

\begin{theorem}\label{thrm:lattice of n-ct}
Let $Q$ be a quiver with admissible degree $N=N(Q)$ and let $D(N)=\{n\in\ZZ \mid n\geq 1 \text{ and } n\divides N\}$. Let $\La=\K Q/\J^2$.
\begin{enumerate}[label=(\alph*)]
    \item If $Q\neq A_1$, then there exists an $n$-cluster tilting subcategory $\cC_n\subseteq\m\La$ if and only if $n\in D(N)$.
    
    \item If $Q\neq \tilde{A}_m$, set
    \[
    \mathbf{CT}(\La)\coloneqq\{\cC\subseteq \m\La \mid \text{there exists $n\in\ZZ_{\geq 1}$ such that $\cC$ is $n$-cluster tilting}\}.
    \]
    Then for every $n\in D(N)$ there exists a unique $n$-cluster tilting subcategory $\cC_n$. Moreover, the pair $(\mathbf{CT}(\La),\subseteq)$ is a poset isomorphic to the opposite of the poset of divisors of $N$. In particular, $(\mathbf{CT}(\La),\subseteq)$ forms a complete lattice where the meet of $\cC_n$ and $\cC_{n'}$ is given by $\cC_{\lcm(n,n')}$ and the join of $\cC_n$ and $\cC_{n'}$ is given by $\cC_{\gcd(n,n')}$.
\end{enumerate}
\end{theorem}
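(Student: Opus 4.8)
The plan is to assemble the statement from Theorem \ref{thrm:n-cluster tilting iff n-admissible}, Proposition \ref{prop:inclusion of subcategories}, Remark \ref{rem:n-admissible for everything that divides n} and the fact that $(D(N),\divides)$ is a complete lattice (Example \ref{ex:complete lattice of divisors}); the theorem is essentially a reorganization of these results in terms of the divisor lattice. Throughout I write $S\coloneqq\{n\in\ZZ_{\geq 2} \mid Q\text{ is } n\text{-admissible}\}$.

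For part (a), I first dispose of $n=1$: since $Q\neq A_1$, the whole category $\m\La$ is the unique $1$-cluster tilting subcategory, and $1\in D(N)$ always holds. For $n\geq 2$, Theorem \ref{thrm:n-cluster tilting iff n-admissible} says that an $n$-cluster tilting subcategory exists if and only if $Q$ is $n$-admissible, so it suffices to show that $Q$ is $n$-admissible if and only if $n\divides N$. The key observation is that $S$ is \emph{closed under least common multiples}: if $Q=\tilde{A}_m$ this holds because $S=\{n\geq 2\mid n\divides m\}$, while if $Q\neq\tilde{A}_m$ and $Q\neq A_1$ it follows from Definition \ref{def:n-admissible}(b), since $n\divides(k+q(\mathbf{v}))$ and $n'\divides(k+q(\mathbf{v}))$ for every $k$-flow path $\mathbf{v}$ force $\lcm(n,n')\divides(k+q(\mathbf{v}))$. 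Moreover $S$ is finite, since any $n\in S$ is bounded (it divides $k+q(\mathbf{v})$ for a fixed flow path, respectively divides $m$). If $S=\varnothing$ then $N=1$ and there is nothing to prove; otherwise $N=\max S=\lcm(S)\in S$, so every element of $S$ divides $N$, which gives the implication ``$Q$ is $n$-admissible $\Rightarrow n\divides N$''. The converse is exactly Remark \ref{rem:n-admissible for everything that divides n}(b) applied to the $N$-admissible quiver $Q$ (note $n\divides N$ with $n\geq 2$ forces $N\geq 2$, so $N\in S$ and $Q$ is indeed $N$-admissible).

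For part (b), the degenerate case $Q=A_1$ is trivial: there $N=1$, $D(N)=\{1\}$ and $\mathbf{CT}(\La)=\{\m\La\}$ is a one-point poset, anti-isomorphic to $D(1)$. So I may also assume $Q\neq A_1$. Existence and uniqueness of $\cC_n$ for each $n\in D(N)$ then follow from part (a) together with the uniqueness in Theorem \ref{thrm:n-cluster tilting iff n-admissible} (for $n\geq 2$) and the evident uniqueness of $\m\La=\cC_1$; this gives a well-defined map $\phi\colon D(N)\to\mathbf{CT}(\La)$, $n\mapsto\cC_n$. Surjectivity is immediate, since any $\cC\in\mathbf{CT}(\La)$ is $n$-cluster tilting for some $n$, which lies in $D(N)$ by part (a), whence $\cC=\cC_n$ by uniqueness. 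For order-reversal and injectivity I invoke Proposition \ref{prop:inclusion of subcategories}: $\cC_n\subseteq\cC_{n'}$ if and only if $n'\divides n$. In particular $\cC_n=\cC_{n'}$ forces $n\divides n'$ and $n'\divides n$, hence $n=n'$, giving injectivity; and the equivalence $\cC_n\subseteq\cC_{n'}\iff n'\divides n$ says precisely that $\phi$ is an isomorphism from $(D(N),\divides)$ onto the opposite of $(\mathbf{CT}(\La),\subseteq)$, i.e. $(\mathbf{CT}(\La),\subseteq)\isom (D(N),\divides)^{\op}$.

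Finally, since $(D(N),\divides)$ is a (finite, hence complete) lattice by Example \ref{ex:complete lattice of divisors}, and the opposite of a complete lattice is again a complete lattice, $(\mathbf{CT}(\La),\subseteq)$ is a complete lattice. Under the order-reversing bijection $\phi$, meets and joins swap: the meet of $\cC_n$ and $\cC_{n'}$ is the image of the join $\lcm(n,n')$ in $D(N)$, hence equals $\cC_{\lcm(n,n')}$, and dually their join equals $\cC_{\gcd(n,n')}$. The only genuinely delicate point is the forward implication of part (a)---that every admissible degree must divide $N$---which is exactly why I isolate the closure of $S$ under $\lcm$; all remaining steps are bookkeeping with the inclusion criterion of Proposition \ref{prop:inclusion of subcategories} and the structure of the divisor lattice.
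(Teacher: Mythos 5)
Your proposal is correct and follows essentially the same route as the paper: part (a) reduces to Theorem \ref{thrm:n-cluster tilting iff n-admissible} plus the observation that the set of admissible degrees is closed under least common multiples (the paper phrases this as ``$\lcm(n,N)$ is an admissible degree, hence $\lcm(n,N)\leq N$'', which is your $N=\lcm(S)$ argument), and part (b) is assembled from Proposition \ref{prop:inclusion of subcategories} and the structure of the divisor lattice exactly as in the paper. The extra care you take with the degenerate cases ($Q=A_1$ in part (b), $S=\varnothing$) matches the paper's treatment of $N(Q)=1$.
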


\begin{proof}
\begin{enumerate}[label=(\alph*)]
    \item For $n=1$ the result is clear since $\m\La$ is a $1$-cluster tilting subcategory. Assume now that $n>1$. If $n\in D(N)$, then it follows from Remark \ref{rem:n-admissible for everything that divides n}(b) and Theorem \ref{thrm:n-cluster tilting iff n-admissible} that $\La$ admits an $n$-cluster tilting subcategory, which proves one direction.
    
    For the other direction assume that there exists an $n$-cluster tilting subcategory $\cC_{n}\subseteq\m\La$ and we show that $n\in D(N)$. It follows from Theorem \ref{thrm:n-cluster tilting iff n-admissible} that $Q$ is $n$-admissible and so $1<n\leq N$. Hence by Definition \ref{def:admissible degree} it follows that $Q$ is $N$-admissible too. We consider the cases $Q=\tilde{A}_m$ and $Q\neq \tilde{A}_m$ separately.
    
    If $Q=\tilde{A}_m$ for some $m\geq 1$, then we have by Definition \ref{def:n-admissible} that $N\divides m$ and so $N\leq m$. Moreover, the quiver $\tilde{A}_m$ is always $m$-admissible and so $m\leq N$. It follows that $m=N$. Since $Q$ is also $n$-admissible, we have that $n\divides m=N$ and so $n\in D(N)$.
    
    If $Q\neq\tilde{A}_m$, then there exists a flow path $\mathbf{v}$ in $Q$ by Lemma \ref{lem:the existence of flow paths}. Moreover, for every $k_{\mathbf{v}}$-flow path $\mathbf{v}$ we have that $n\divides (k_{\mathbf{v}}+q(\mathbf{v}))$ and $N\divides (k_{\mathbf{v}}+q(\mathbf{v}))$. It follows that $\lcm(n,N)\divides (k_{\mathbf{v}}+q(\mathbf{v}))$ for every flow path $\mathbf{v}$ in $Q$. Hence $Q$ is $\lcm(n,N)$-admissible and so $\lcm(n,N)\leq N$. We conclude that $n\divides N$ and so $n\in D(N)$. 
    
    \item If $N(Q)=1$, then $\mathbf{CT}(\La)=\{\m\La\}$ and the result is clear. If $N(Q)>1$, then existence of $\cC_n$ follows from (a) and uniqueness by Theorem \ref{thrm:n-cluster tilting iff n-admissible}. Then $(\mathbf{CT}(\La),\subseteq)$ is a poset isomorphic to the opposite of the poset of divisors of $N$ by Proposition \ref{prop:inclusion of subcategories}. That $\mathbf{CT}(\La)$ forms a complete lattice with the given meet and join follows from Example \ref{ex:complete lattice of divisors}.\qedhere
\end{enumerate}
\end{proof}

We finish with an example which illustrates Theorem \ref{thrm:lattice of n-ct}.

\begin{example}\label{ex:lattice of n-ct}
Let $Q$ be the quiver
\[
\begin{tikzcd}[row sep=small, column sep=small]
    {} & 23 \arrow[ld] & 22 \arrow[l] & 21 \arrow[l] & 20 \arrow[l] & 19 \arrow[l] \\
    1 \arrow[r] \arrow[d] & 14 \arrow[r] & 15 \arrow[r] & 16 \arrow[r] & 17 \arrow[r] & 18 \arrow[u] \\
    2 \arrow[r] & 3 \arrow[r] & 4 \arrow[r] & 5 \arrow[r] & 6 \arrow[r] & 7 \arrow[r] & 8 \arrow[r] & 9 \arrow[r] & 10 \arrow[r] & 11 \arrow[r] & 12 \arrow[r] & 13.
\end{tikzcd}
\]
Then $N(Q)=12$. The Aus\-lan\-der--Rei\-ten quiver of $\La=\K Q/\J^2$ is
\[
\begin{tikzpicture}[scale=0.87, transform shape, baseline={(current bounding box.center)}]
\tikzstyle{nct}=[circle, minimum width=0.6cm, draw, inner sep=0pt, text centered, scale=0.9]
\tikzstyle{nout}=[circle, minimum width=6pt, draw=none, inner sep=0pt, scale=0.9]
    
    \node[nout] (13) at (0,2.1) {$\begin{smallmatrix}
    13
    \end{smallmatrix}$};
    \node[nout] (12 13) at (0.7,2.8) {$\begin{smallmatrix}
    12 \\ 13
    \end{smallmatrix}$};
    \node[nout] (12) at (1.4,2.1) {$\begin{smallmatrix}
    12
    \end{smallmatrix}$};
    \node[nout] (11 12) at (2.1,2.8) {$\begin{smallmatrix}
    11 \\ 12
    \end{smallmatrix}$};
    \node[nout] (11) at (2.8, 2.1) {$\begin{smallmatrix}
    11
    \end{smallmatrix}$};
    \node[nout] (10 11) at (3.5,2.8) {$\begin{smallmatrix}
    10 \\ 11
    \end{smallmatrix}$};
    \node[nout] (10) at (4.2,2.1) {$\begin{smallmatrix}
    10
    \end{smallmatrix}$};
    \node[nout] (9 10) at (4.9,2.8) {$\begin{smallmatrix}
    9 \\ 10
    \end{smallmatrix}$};
    \node[nout] (9) at (5.6,2.1) {$\begin{smallmatrix}
    9
    \end{smallmatrix}$};
    \node[nout] (8 9) at (6.3,2.8) {$\begin{smallmatrix}
    8 \\ 9
    \end{smallmatrix}$};
    \node[nout] (8) at (7,2.1) {$\begin{smallmatrix}
    8
    \end{smallmatrix}$};
    \node[nout] (7 8) at (7.7,2.8) {$\begin{smallmatrix}
    7 \\ 8
    \end{smallmatrix}$};
    \node[nout] (7) at (8.4,2.1) {$\begin{smallmatrix}
    7
    \end{smallmatrix}$};
    \node[nout] (6 7) at (9.1,2.8) {$\begin{smallmatrix}
    6 \\ 7
    \end{smallmatrix}$};
    \node[nout] (6) at (9.8,2.1) {$\begin{smallmatrix}
    6
    \end{smallmatrix}$};
    \node[nout] (5 6) at (10.5,2.8) {$\begin{smallmatrix}
    5 \\ 6
    \end{smallmatrix}$};
    \node[nout] (5) at (11.2,2.1) {$\begin{smallmatrix}
    5
    \end{smallmatrix}$};
    \node[nout] (4 5) at (11.9,2.8) {$\begin{smallmatrix}
    4 \\ 5
    \end{smallmatrix}$};
    \node[nout] (4) at (12.6,2.1) {$\begin{smallmatrix}
    4
    \end{smallmatrix}$};
    \node[nout] (3 4) at (13.3,2.8) {$\begin{smallmatrix}
    3 \\ 4
    \end{smallmatrix}$};
    \node[nout] (3) at (14,2.1) {$\begin{smallmatrix}
    3
    \end{smallmatrix}$};
    \node[nout] (2 3) at (14.7,2.8) {$\begin{smallmatrix}
    2 \\ 3
    \end{smallmatrix}$};
    \node[nout] (2) at (15.4,2.1) {$\begin{smallmatrix}
    2
    \end{smallmatrix}$};
    
    \node[nout] (1) at (1.4,0.7) {$\begin{smallmatrix}
    1
    \end{smallmatrix}$};
    \node[nout] (23 1) at (2.1,0) {$\begin{smallmatrix}
    23 \\ 1
    \end{smallmatrix}$};
    \node[nout] (23) at (2.8,0.7) {$\begin{smallmatrix}
    23
    \end{smallmatrix}$};
    \node[nout] (22 23) at (3.5,0) {$\begin{smallmatrix}
    22 \\ 23
    \end{smallmatrix}$};
    \node[nout] (22) at (4.2,0.7) {$\begin{smallmatrix}
    22
    \end{smallmatrix}$};
    \node[nout] (21 22) at (4.9,0) {$\begin{smallmatrix}
    21 \\ 22
    \end{smallmatrix}$};
    \node[nout] (21) at (5.6,0.7) {$\begin{smallmatrix}
    21
    \end{smallmatrix}$};
    \node[nout] (20 21) at (6.3,0) {$\begin{smallmatrix}
    20 \\ 21
    \end{smallmatrix}$};
    \node[nout] (20) at (7,0.7) {$\begin{smallmatrix}
    20
    \end{smallmatrix}$};
    \node[nout] (19 20) at (7.7,0) {$\begin{smallmatrix}
    19 \\ 20
    \end{smallmatrix}$};
    \node[nout] (19) at (8.4,0.7) {$\begin{smallmatrix}
    19
    \end{smallmatrix}$};
    \node[nout] (18 19) at (9.1,0) {$\begin{smallmatrix}
    18 \\ 19
    \end{smallmatrix}$};
    \node[nout] (18) at (9.8,0.7) {$\begin{smallmatrix}
    18
    \end{smallmatrix}$};
    \node[nout] (17 18) at (10.5,0) {$\begin{smallmatrix}
    17 \\ 18
    \end{smallmatrix}$};
    \node[nout] (17) at (11.2,0.7) {$\begin{smallmatrix}
    17
    \end{smallmatrix}$};
    \node[nout] (16 17) at (11.9,0) {$\begin{smallmatrix}
    16 \\ 17
    \end{smallmatrix}$};
    \node[nout] (16) at (12.6,0.7) {$\begin{smallmatrix}
    16
    \end{smallmatrix}$};
    \node[nout] (15 16) at (13.3,0) {$\begin{smallmatrix}
    15 \\ 16
    \end{smallmatrix}$};
    \node[nout] (15) at (14,0.7) {$\begin{smallmatrix}
    15
    \end{smallmatrix}$};
    \node[nout] (14 15) at (14.7,0) {$\begin{smallmatrix}
    14 \\ 15
    \end{smallmatrix}$};
    \node[nout] (14) at (15.4,0.7) {$\begin{smallmatrix}
    14
    \end{smallmatrix}$};
    
    \node[nout] (1 2 14) at (16.1,1.4) {$\begin{smallmatrix}
    1 \\ 2\;\;14
    \end{smallmatrix}$};
    \node[nout] (1 14) at (16.8,2.1) {$\begin{smallmatrix}
    1 \\ \;\;\;14
    \end{smallmatrix}$};
    \node[nout] (1 2) at (16.8,0.7) {$\begin{smallmatrix}
    \;\;\;1 \\ 2
    \end{smallmatrix}$};
    \node[nout] (1b) at (17.5,1.4) {$\begin{smallmatrix}
    1.
    \end{smallmatrix}$};
    
    \draw[->] (13) to (12 13);
    \draw[->] (12 13) to (12);
    \draw[->] (12) to (11 12);
    \draw[->] (11 12) to (11);
    \draw[->] (11) to (10 11);
    \draw[->] (10 11) to (10);
    \draw[->] (10) to (9 10);
    \draw[->] (9 10) to (9);
    \draw[->] (9) to (8 9);
    \draw[->] (8 9) to (8);
    \draw[->] (8) to (7 8);
    \draw[->] (7 8) to (7);
    \draw[->] (7) to (6 7);
    \draw[->] (6 7) to (6);
    \draw[->] (6) to (5 6);
    \draw[->] (5 6) to (5);
    \draw[->] (5) to (4 5);
    \draw[->] (4 5) to (4);
    \draw[->] (4) to (3 4);
    \draw[->] (3 4) to (3);
    \draw[->] (3) to (2 3);
    \draw[->] (2 3) to (2);
    
    \draw[->] (1) to (23 1);
    \draw[->] (23 1) to (23);
    \draw[->] (23) to (22 23);
    \draw[->] (22 23) to (22);
    \draw[->] (22) to (21 22);
    \draw[->] (21 22) to (21);
    \draw[->] (21) to (20 21);
    \draw[->] (20 21) to (20);
    \draw[->] (20) to (19 20);
    \draw[->] (19 20) to (19);
    \draw[->] (19) to (18 19);
    \draw[->] (18 19) to (18);
    \draw[->] (18) to (17 18);
    \draw[->] (17 18) to (17);
    \draw[->] (17) to (16 17);
    \draw[->] (16 17) to (16);
    \draw[->] (16) to (15 16);
    \draw[->] (15 16) to (15);
    \draw[->] (15) to (14 15);
    \draw[->] (14 15) to (14);
    
    \draw[->] (2) to (1 2 14);
    \draw[->] (14) to (1 2 14);
    \draw[->] (1 2 14) to (1 14);
    \draw[->] (1 2 14) to (1 2);
    \draw[->] (1 14) to (1b);
    \draw[->] (1 2) to (1b);
    
    \draw[loosely dotted] (13.east) -- (12);
    \draw[loosely dotted] (12.east) -- (11);
    \draw[loosely dotted] (11.east) -- (10);
    \draw[loosely dotted] (10.east) -- (9);
    \draw[loosely dotted] (9.east) -- (8);
    \draw[loosely dotted] (8.east) -- (7);
    \draw[loosely dotted] (7.east) -- (6);
    \draw[loosely dotted] (6.east) -- (5);
    \draw[loosely dotted] (5.east) -- (4);
    \draw[loosely dotted] (4.east) -- (3);
    \draw[loosely dotted] (3.east) -- (2);
    
    \draw[loosely dotted] (1.east) -- (23);
    \draw[loosely dotted] (23.east) -- (22);
    \draw[loosely dotted] (22.east) -- (21);
    \draw[loosely dotted] (21.east) -- (20);
    \draw[loosely dotted] (20.east) -- (19);
    \draw[loosely dotted] (19.east) -- (18);
    \draw[loosely dotted] (18.east) -- (17);
    \draw[loosely dotted] (17.east) -- (16);
    \draw[loosely dotted] (16.east) -- (15);
    \draw[loosely dotted] (15.east) -- (14);
    
    \draw[loosely dotted] (2.east) -- (1 14);
    \draw[loosely dotted] (14.east) -- (1 2);
    \draw[loosely dotted] (1 2 14.east) -- (1b);
\end{tikzpicture}
\]
The divisors of $12$ are $D(12)=\{1,2,3,4,6,12\}$. For $n\in D(12)$ we set $M_n=\bigoplus_{j\geq 0}\tau_n^{-j}(\La)$ and $\cC_n=\add(M_n)$. Then we have
\begin{align*}
    &\cC_{1}=\m\La,
    && \cC_{2}=\add\{\La, \begin{smallmatrix}
    11
    \end{smallmatrix}, \begin{smallmatrix}
    9
    \end{smallmatrix}, \begin{smallmatrix}
    7
    \end{smallmatrix}, \begin{smallmatrix}
    5
    \end{smallmatrix}, \begin{smallmatrix}
    3
    \end{smallmatrix}, \begin{smallmatrix}
    1 \\ 14
    \end{smallmatrix}, \begin{smallmatrix}
    23
    \end{smallmatrix}, \begin{smallmatrix}
    21
    \end{smallmatrix}, \begin{smallmatrix}
    19
    \end{smallmatrix}, \begin{smallmatrix}
    17
    \end{smallmatrix}, \begin{smallmatrix}
    15
    \end{smallmatrix}, \begin{smallmatrix}
    1 \\ 2
    \end{smallmatrix}\}, \\
    & \cC_{3}=\add\{\La, \begin{smallmatrix}
    10
    \end{smallmatrix}, \begin{smallmatrix}
    7
    \end{smallmatrix}, \begin{smallmatrix}
    4
    \end{smallmatrix}, \begin{smallmatrix}
    1 \\ 14
    \end{smallmatrix}, \begin{smallmatrix}
    22
    \end{smallmatrix}, \begin{smallmatrix}
    19
    \end{smallmatrix}, \begin{smallmatrix}
    16
    \end{smallmatrix}, \begin{smallmatrix}
    1 \\ 2
    \end{smallmatrix}\},
    && \cC_{4}=\add\{\La,\begin{smallmatrix}
    9
    \end{smallmatrix}, \begin{smallmatrix}
    5
    \end{smallmatrix}, \begin{smallmatrix}
    1 \\ 14
    \end{smallmatrix}, \begin{smallmatrix}
    21
    \end{smallmatrix}, \begin{smallmatrix}
    17
    \end{smallmatrix}, \begin{smallmatrix}
    1 \\ 2
    \end{smallmatrix}\}, \\
    & \cC_{6}=\add\{\La, \begin{smallmatrix}
    7
    \end{smallmatrix}, \begin{smallmatrix}
    1 \\ 14
    \end{smallmatrix}, \begin{smallmatrix}
    19
    \end{smallmatrix}, \begin{smallmatrix}
    1 \\ 2
    \end{smallmatrix}\},
    && \cC_{12} = \add\{\La,\begin{smallmatrix}
    1 \\ 14
    \end{smallmatrix},\begin{smallmatrix}
    1 \\ 2
    \end{smallmatrix}\}, 
\end{align*}
and $\cC_n$ is an $n$-cluster tilting subcategory of $\m\La$ by Theorem \ref{thrm:n-cluster tilting iff n-admissible}. Then the lattice 
\[
\begin{tikzcd}[column sep=.7em, row sep=.7em]
    & 12 \arrow[dr, dash] & & \\
    4 \arrow[ru, dash] \arrow[rd, dash] & & 6 \arrow[rd, dash] \\
    & 2 \arrow[ru, dash] \arrow[rd, dash] & & 3 \\
    & & 1 \arrow[ru, dash] & 
\end{tikzcd}
\]
of divisors of $12$ corresponds to the lattice
\[
\begin{tikzcd}[column sep=.7em, row sep=.7em]
    & \cC_{12} \arrow[dr, symbol=\subset] & & \\
    \cC_{4} \arrow[ru, symbol=\supset] \arrow[rd, symbol=\subset] & & \cC_{6} \arrow[rd, symbol=\subset] \\
    & \cC_{2} \arrow[ru, symbol=\supset] \arrow[rd, symbol=\subset] & & \cC_{3} \\
    & & \cC_{1} \arrow[ru, symbol=\supset] & 
\end{tikzcd}
\]
of inclusions of $n$-cluster tilting subcategories of $\m\La$.
\end{example}

\section*{Acknowledgements}
The author wishes to thank Hipolito Treffinger for valuable discussions on string algebras and helpful suggestions about the article. The author also thanks Ren{\'e} Marczinzik for bringing to his attention the question in paragraph 5.4 of \cite{EH}. The author is grateful to Max Planck Institute for Mathematics in Bonn for its hospitality and financial support.

\end{document}